\numberwithin{equation}{section}
\theoremstyle{plain}
\newtheorem{theorem}[equation]{Theorem}
\newtheorem{corollary}[equation]{Corollary}
\newtheorem{lemma}[equation]{Lemma}
\theoremstyle{definition}
\newtheorem{definition}[equation]{Definition}
\theoremstyle{remark}
\newtheorem{remark}[equation]{Remark}
\newcommand{\HGr}{\mathrm{HGr}}
\newcommand{\RGr}{\mathrm{RGr}}
\newcommand{\Gr}{\mathrm{Gr}}
\newcommand{\HProj}{\mathrm{HP}}
\newcommand{\HFlag}{\mathrm{HFlag}}
\newcommand{\Hplane}{\mathrm{H}}
\newcommand{\Hstruct}{\mathcal{H}}
\newcommand{\KW}{\mathrm{KW}}
\newcommand{\GW}{\mathrm{GW}}
\newcommand{\W}{\mathrm{W}}
\newcommand{\KO}{\mathrm{KO}}
\newcommand{\Sp}{\mathrm{Sp}}
\newcommand{\Orth}{\mathrm{O}}
\newcommand{\K}{\mathrm{K}}
\newcommand{\pt}{\mathrm{pt}}
\newcommand{\Tr}{\mathrm{Tr}}
\newcommand{\T}{\mathrm{T}}
\newcommand{\Z}{{\mathbb{Z}}}
\newcommand{\Q}{{\mathbb{Q}}}
\newcommand{\A}{\mathbb{A}}
\newcommand{\Sph}{\mathrm{S}}
\newcommand{\SSph}{\mathbb{S}}
\newcommand{\thc}{\operatorname{th}}
\newcommand{\triv}{\mathbbm{1}}
\newcommand{\SH}{\mathcal{SH}}
\newcommand{\Hom}{\operatorname{Hom}}
\newcommand{\id}{\operatorname{id}}
\newcommand{\Spec}{\operatorname{Spec}}
\newcommand{\Ho}{{\mathcal{H}_{\bullet}}}
\newcommand{\HP}{\mathcal{HP}}
\newcommand{\IQ}{\mathrm{IQ}}
\newcommand{\PE}{\mathrm{PE}}
\newcommand{\cprod}{\star}
\newcommand{\bigslant}[2]{{\left.\raisebox{.2em}{$#1$}\middle/\raisebox{-.2em}{$#2$}\right.}}
\begin{document}

\title[Operations and cooperations in derived Witt theory]{Stable operations and cooperations in derived Witt theory with rational coefficients}

\author{Alexey Ananyevskiy}

\address{Institute for Advanced Study, Princeton, NJ, USA}

\address{Chebyshev Laboratory\\ St. Petersburg State University\\ 14th Line, 29b, Saint Petersburg\\ 199178 Russia}

\email{alseang@gmail.com}


\keywords{derived Witt groups, operations, cooperations}

\begin{abstract}
The algebras of stable operations and cooperations in derived Witt theory with rational coefficients are computed and an additive description of cooperations in derived Witt theory is given. The answer is parallel to the well-known case of $\K$-theory of real vector bundles in topology. In particular, we show that stable operations in derived Witt theory with rational coefficients are given by the values on the powers of Bott element. 
\end{abstract}

\maketitle

\section{Introduction}

Derived Witt theory introduced by Balmer \cite{Bal99} (see also \cite{Bal05} for an extensive survey) immerses Witt groups of (commutative, unital) rings and, more generally, Witt groups of schemes, into the realm of generalized cohomology theories, producing for a smooth variety $X$ a sequence of groups $W^{[n]}(X)$. This sequence is $4$-periodic in $n$ with $\W^{[0]}(X)$ and $\W^{[2]}(X)$ being canonically identified with the Witt group of symmetric vector bundles and the Witt group of symplectic vector bundles respectively. The latter groups were introduced by Knebusch \cite{Kne77}. All the groups $\W^{[n]}(X)$ are presented by generators and relations: roughly speaking, one should repeat the classic definition of Witt group of a field in the setting of derived categories of coherent sheaves (or perfect complexes) carefully treating the notion of metabolic objects. The above mentioned periodicity yields that in a certain sense we do not have "higher" derived Witt groups in distinction to the case of algebraic $\K$-theory.

Another approach to derived Witt theory is given by higher Grothedieck-Witt groups $\GW^{[n]}_i(X)$ (also known under the name of hermitian $\K$-theory) defined for schemes by Schlichting \cite{Sch10b}, see also \cite{Sch10a, Sch12}. For an affine scheme these groups coincide with hermitian $\K$-groups introduced by Karoubi. It turns out \cite[Proposition~6.3]{Sch12} that negative higher Grothendieck-Witt groups coincide with the derived Witt groups defined by Balmer, $\GW^{[n]}_i(X)\cong \W^{[n-i]}(X)$ for $i<0$.

If the characteristic of the base field is not $2$, then higher Grothendieck-Witt groups of smooth varieties are representable in the stable motivic homotopy category, see \cite{Hor05} or \cite{ST13} for a geometric model. It is well-known that derived Witt theory can be obtained from higher Grothendieck-Witt groups inverting the Hopf element $\eta$, see for example \cite[Theorem~6.5]{An12}). The Hopf element $\eta$ is the element in the motivic stable homotopy group $\pi_{1,1}(k)$ corresponding to the projection $\A^2-\{0\}\to \mathbb{P}^1$, $(x,y)\mapsto [x:y]$ (see Definition~\ref{def_hopf}). Thus derived Witt theory is represented in the stable motivic homotopy category by a spectrum representing higher Grothendieck-Witt groups with $\eta$ inverted. We denote the latter spectrum $\KW$. This spectrum is not only $(1,1)$-periodic via $\eta$ but also $(8,4)$-periodic with the periodicity realized by cup product with a Bott element $\beta\in \KW^{-8,-4}(\pt)$. In this paper we compute the algebras of operations and cooperations in derived Witt theory with rational coefficients, that is $\KW_\Q^{*,*}(\KW_\Q)$ and $(\KW_\Q)_{*,*}(\KW_\Q)$, and give an additive description of the cooperations in derived Witt theory, $\KW_{*,*}(\KW)$ (see Definition~\ref{def:bigr} for the notation). The answer is the following one (see Theorems~\ref{thm:rational_stable_operations_KW},~\ref{thm:cooperations},~\ref{thm:cooperations_integral}).

\begin{theorem}
Let $k$ be a field of characteristic not $2$. Then the homomorphism of left $\KW_\Q^{0,0}(\Spec k)\cong \W_\Q(k)$-modules
\[
Ev\colon \KW_\Q^{0,0}(\KW_\Q) \to \prod_{m\in \Z} \W_\Q(k)
\]
given by 
\[
Ev(\phi)=\left(\dots,\beta^2\phi(\beta^{-2}),\beta\phi(\beta^{-1}),\phi(1),\beta^{-1}\phi(\beta),\beta^{-2}\phi(\beta^{2}),\dots \right)
\]
is an isomorphism of algebras. Here the product on the left is given by composition and the product on the right is the component-wise one. 

Moreover, $\KW_\Q^{p,q}(\KW_\Q)=0$ when $4\nmid p-q$ and the above isomorphism induces an isomorphism of left $\KW_\Q^{*,*}(\Spec k)\cong \W_\Q(k)[\eta^{\pm 1},\beta^{\pm 1}]$-modules
\[
\KW_\Q^{*,*}(\KW_\Q)\cong \bigoplus_{r,s\in\Z} \beta^r\eta^s\prod_{m\in \Z} \W_\Q(k)
\]
with $\deg \beta =(-8,-4)$, $\deg \eta =(-1,-1)$.
\end{theorem}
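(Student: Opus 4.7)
The plan is to bootstrap from $\KW_\Q^{0,0}(\KW_\Q)$ to the full bigraded answer using the periodicities by $\beta$ and $\eta$, and to analyze the $(0,0)$-piece by a rational splitting of $\KW_\Q$ into a product of shifted copies of a single Eilenberg--MacLane-type building block.

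\textbf{Reduction to $(0,0)$.} Multiplication by $\beta^r\eta^s$ is invertible on $\KW_\Q$, inducing an isomorphism
\[
\KW_\Q^{0,0}(\KW_\Q)\xrightarrow{\,\sim\,}\KW_\Q^{-8r-s,-4r-s}(\KW_\Q)
\]
of left $\W_\Q(k)$-modules; this accounts for all bidegrees $(p,q)$ with $4\mid p-q$. The vanishing in the remaining bidegrees mirrors the analogous vanishing in the coefficient ring $\KW_\Q^{*,*}(\pt)=\W_\Q(k)[\eta^{\pm 1},\beta^{\pm 1}]$ and should be deducible from the rigidity of rational $\eta$-inverted motivic spectra, for instance via comparison with a Witt-theoretic Eilenberg--MacLane spectrum supported only on the diagonal mod $4$.

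\textbf{The $(0,0)$ isomorphism.} The core step is a splitting
\[
\KW_\Q\simeq \prod_{n\in\Z}\Sigma^{8n,4n}E
\]
with $E$ a rational Eilenberg--MacLane-type spectrum whose $(0,0)$-cohomology yields $\W_\Q(k)$. After rationalizing, the structure of the $\eta$-inverted motivic category should collapse enough for such a splitting to hold, with the $n$-th projector realized by an idempotent endomorphism extracted from the action of $\beta$. Granting the splitting,
\[
\KW_\Q^{0,0}(\KW_\Q)\cong \prod_{n}\KW_\Q^{0,0}(\Sigma^{8n,4n}E)\cong\prod_{n}\W_\Q(k),
\]
and this isomorphism agrees with $Ev$ because the $n$-th factor captures exactly the value of an operation on $\beta^{-n}$, rescaled by $\beta^n$ to land in $\W_\Q(k)$.

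\textbf{Algebra structure and main obstacle.} Since each summand is one-dimensional over $\W_\Q(k)$ and operations are $\W_\Q(k)$-linear, composition restricts to scalar multiplication on each factor; hence composition on $\KW_\Q^{0,0}(\KW_\Q)$ corresponds to component-wise multiplication, giving the algebra isomorphism. The technical heart of the argument --- and its main obstacle --- is the surjectivity of $Ev$, equivalently the construction of an operation taking any prescribed sequence of values $(a_m)\in\prod_m\W_\Q(k)$ on the powers $\beta^{-m}$. Injectivity should be the easier half, following from $\W_\Q(k)[\eta^{\pm 1},\beta^{\pm 1}]$-linearity and the fact that the $\beta^{-m}$ together with this ring action exhaust $\KW_\Q^{*,*}(\pt)$.
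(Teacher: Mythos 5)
Your reduction to bidegree $(0,0)$ via the $\beta$- and $\eta$-periodicities matches the paper, but the core of your argument rests on an unproved and, as set up, circular assertion. The splitting $\KW_\Q\simeq\prod_{n\in\Z}\Sigma^{8n,4n}E$ is exactly the kind of statement this theorem is designed to feed into, not something available beforehand: the idempotent projectors onto the factors are themselves elements of $\KW_\Q^{0,0}(\KW_\Q)$ (namely the operations $\phi$ with $Ev(\phi)=(\dots,0,1,0,\dots)$), so ``extracting them from the action of $\beta$'' presupposes the surjectivity of $Ev$ that you yourself identify as the main obstacle and leave unresolved. Moreover, even granting the splitting you would still need $\Hom_{\SH(k)}(E,\Sigma^{8j,4j}E)\cong\W_\Q(k)$ for $j=0$ and $0$ otherwise, together with the vanishing of maps in bidegrees off the diagonal mod $4$ --- i.e.\ the full bigraded self-maps of the building block $E$, which is essentially the statement being proven. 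The appeal to ``rigidity of rational $\eta$-inverted motivic spectra'' for the vanishing when $4\nmid p-q$ is likewise not an available input at this stage.

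The paper avoids all of this by working with the explicit model $\KW^{*,*}_\Q(\KW_\Q)=\KW^{*,*}_\Q(\KO)$ and the Milnor exact sequence of Lemma~\ref{lm:truncated_limit}, whose $\varprojlim$ and $\varprojlim\nolimits^1$ terms are formed from $\KW_\Q^{*+8n+4,*+4n+2}(\HGr)$. The transition maps of the inverse system are computed explicitly on the power sums $s_i^{\KW}(\tau^s)$ using the triple tensor product formula of Lemma~\ref{lm:formal_group_law} (this is Lemma~\ref{lm:main_computation}); passing to indecomposable quotients one sees that the transition matrix is surjective onto the odd-indexed part, so $\varprojlim\nolimits^1$ vanishes --- this is where your surjectivity obstacle is actually overcome --- the limit is identified with $\prod_{m\in\Z}\W_\Q(k)$, the off-diagonal vanishing falls out because $\W^{[j]}_\Q(k)=0$ for $4\nmid j$, and the identification of the resulting isomorphism with $Ev$ is checked by evaluating the limit operations on $\langle\Hstruct(1)\rangle-\langle\Hplane_{-}\rangle$ via Lemma~\ref{lm:stable_as_sequence}. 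You would need to supply an argument of comparable substance for the splitting and for $E^{*,*}(E)$ before your outline becomes a proof.
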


\begin{theorem} \label{thm:intrcoop}
Let $k$ be a field of characteristic not $2$. Then the homomorphism of $\W_\Q(k)[\eta^{\pm 1}]\cong \bigoplus\limits_{n\in\Z}\KW_\Q^{n,n}(\Spec k)$-algebras
\[
\W_\Q(k)[\eta^{\pm 1}][\beta^{\pm 1}_l,\beta_r^{\pm 1}]\to (\KW_\Q)_{*,*}(\KW_\Q)
\]
given by 
\[
\beta_l\mapsto \Sigma^{8,4}\beta\wedge u_{\KW_\Q},\quad \beta_r\mapsto u_{\KW_\Q}\wedge \Sigma^{8,4}\beta
\]
is an isomorphism of rings. Here $u_{\KW_\Q}\colon \SSph\to \KW_\Q$ is the unit map and 
\begin{multline*}
\Sigma^{8,4}\beta\wedge u_{\KW_\Q},\,u_{\KW_\Q}\wedge \Sigma^{8,4}\beta \in (\KW_\Q)_{8,4}(\KW_\Q)=\\
=\Hom_{\SH(k)}(\SSph\wedge \Sph^{8,4},\KW_\Q\wedge \KW_\Q).
\end{multline*}
\end{theorem}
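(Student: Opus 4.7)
The plan is to deduce this from the description of $\KW_\Q^{*,*}(\KW_\Q)$ in the preceding theorem via a duality argument. First I would verify well-definedness of the map: the elements $\beta_l=\Sigma^{8,4}\beta\wedge u_{\KW_\Q}$ and $\beta_r=u_{\KW_\Q}\wedge \Sigma^{8,4}\beta$ are invertible in $(\KW_\Q)_{*,*}(\KW_\Q)$, being images of the invertible Bott element under the two unit maps $\KW_\Q\to \KW_\Q\wedge \KW_\Q$, and they commute with each other and with $\eta$ because they act on distinct smash factors. Hence the Laurent polynomial ring indeed maps into $(\KW_\Q)_{*,*}(\KW_\Q)$ compatibly with the $\W_\Q(k)[\eta^{\pm 1}]$-algebra structure.

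Next I would reduce to finitely many bidegrees. Invertibility of $\eta$ gives $(1,1)$-periodicity, while multiplication by $\beta_l$ or $\beta_r$ gives $(8,4)$-periodicity; suitable shifts by $\beta_l^a\beta_r^b\eta^c$ thus move any bidegree to a chosen one, say $(0,0)$. I would also verify that $(\KW_\Q)_{p,q}(\KW_\Q)=0$ unless $4\mid p-q$, paralleling the vanishing for operations in the preceding theorem.

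For linear independence of the monomials $\beta_l^a\beta_r^b$ I would exploit the natural evaluation pairing
\[
\KW_\Q^{*,*}(\KW_\Q)\otimes_{\pi_{*,*}(\KW_\Q)}(\KW_\Q)_{*,*}(\KW_\Q) \to \pi_{*,*}(\KW_\Q)
\]
induced by the multiplication on $\KW_\Q$. Under the previous theorem, applying the $m$-th component of $Ev$ to the cooperation $\beta_l^a\beta_r^b$ yields a Laurent monomial in $\beta$ that is nonzero exactly when $m$ matches $b$ (with the appropriate convention for which factor $Ev$ reads), so these monomials are $\W_\Q(k)[\eta^{\pm 1}]$-linearly independent.

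The main obstacle is surjectivity: operations form a \emph{product} over $m\in \Z$ whereas cooperations should form a \emph{direct sum} in the two Bott variables, so one cannot simply dualize the operations isomorphism. I expect to handle this by showing that, rationally, $\KW_\Q\wedge \KW_\Q$ splits as a wedge of suspensions of $\KW_\Q$ indexed by $(a,b)\in \Z^2$ (using that $\KW_\Q$ should be equivalent to a suitable rational Eilenberg--MacLane-type spectrum for Witt theory, making the smash product amenable to a K\"unneth-style decomposition); alternatively, via a direct finite-support argument on the coaction forcing each cooperation to live in only finitely many $(\beta_l,\beta_r)$-bidegrees.
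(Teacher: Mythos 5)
Your reduction steps and the linear-independence argument via the Kronecker pairing with the operations of Theorem~\ref{thm:rational_stable_operations_KW} are reasonable, but the proof has a genuine gap exactly where you locate ``the main obstacle'': surjectivity. The first remedy you propose --- a splitting of $\KW_\Q\wedge\KW_\Q$ as a wedge of suspensions $\Sph^{8m,4m}\wedge\KW_\Q$ --- is essentially a restatement of the theorem itself (see the remark following Theorem~\ref{thm:cooperations}: the map $\mathcal{B}_{\KW_\Q}$ being an isomorphism in homology is equivalent to the claim), so assuming it is circular. The suggested route to it, that $\KW_\Q$ ``should be equivalent to a rational Eilenberg--MacLane-type spectrum for Witt theory,'' is not available here: that identification is the subject of the forthcoming work \cite{ALP15}, and one motivation for the present computation is precisely to feed into such results. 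The second alternative, a ``finite-support argument on the coaction,'' is not yet an argument; the coalgebra structure on $(\KW_\Q)_{*,*}(\KW_\Q)$ is part of what is being computed.

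The paper sidesteps the dualization problem entirely by computing $(\KW_\Q)_{*,*}(\KW_\Q)=(\KW_\Q)_{*,*}(\KO)$ directly as the filtered colimit $\varinjlim (\KW_\Q)_{*+8n+4,*+4n+2}(\HGr)$ furnished by Lemma~\ref{lm:truncated_limit}(1); since homology of a spectrum is a colimit (not a limit) over its spaces, the answer is automatically a direct sum and no $\varprojlim^1$ or product-versus-sum issue arises. The terms are computed by Theorem~\ref{thm:homological_HGr}, the transition maps are the explicit duals $S_\Q^\vee$ of the matrices obtained from Lemma~\ref{lm:main_computation}, and after a change of basis the colimit is read off as $\bigoplus_{m\in\Z}(\KW_\Q)_{0,0}(\pt)\tau_m^{st}$; Lemma~\ref{lm:shift_by_beta} and Corollary~\ref{cor:normalization} then identify $\tau_m^{st}$ with $\beta_l^{-m}\cprod\beta_r^m$. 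If you want to keep your duality-flavored outline, you would still need this colimit description (or an equivalent finiteness statement) to obtain surjectivity, at which point the direct computation already yields the whole theorem.
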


\begin{theorem} 
Suppose that $k$ is a field of characteristic not $2$ and let $\mathrm{M}$ be the abelian subgroup of $\Q[v,v^{-1}]$ generated by polynomials 
\[
f_{j,n}=\frac{v^{-n}\prod_{i=0}^{j-1}(v-(2i+1)^2)}{4^j(2j)!},
\]
$j\ge 0, n\in \Z$. Then there are canonical isomorphisms of left $\KW^{0,0}(\Spec k)\cong \W(k)$-modules
\[
\KW_{p,q}(\KW) \cong \left[ \begin{array}{ll} \W(k)\otimes_\Z \mathrm{M}, & 4\mid p-q, \\ 0, & \text{otherwise}. \end{array} \right.
\]
\end{theorem}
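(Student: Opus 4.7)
The plan is to bootstrap from the rational description of Theorem~\ref{thm:intrcoop}, which identifies $(\KW_\Q)_{*,*}(\KW_\Q)$ with the Laurent ring $\W_\Q(k)[\eta^{\pm 1}][\beta_l^{\pm 1},\beta_r^{\pm 1}]$, and to cut out the integral sublattice corresponding to $\KW_{*,*}(\KW)$ inside it. Setting $v = \beta_l^{-1}\beta_r$, the rational cooperations in each bidegree with $4\mid p-q$ reduce, after dividing out by the appropriate powers of $\beta_l$ and $\eta$, to $\W_\Q(k)[v^{\pm 1}]$; this already suggests the answer $\W(k)\otimes_\Z \mathrm{M}$ with $\mathrm{M}\otimes_\Z\Q = \Q[v^{\pm 1}]$, and explains why the claimed group does not depend on the bidegree as long as $4\mid p-q$.

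First I would establish a $\W(k)$-module isomorphism $\KW_{*,*}(\KW)\cong \W(k)\otimes_\Z \mathrm{M}$ by showing that $\KW\wedge\KW$ splits, as a left $\KW$-module, into a coproduct of shifted copies of $\KW$ indexed by a free abelian group $\mathrm{M}$ --- for instance via a motivic Bott-type resolution, or by transporting the classical Anderson splitting of $\KO\wedge\KO$ to this motivic setting. Such a splitting reduces the computation of $\KW_{*,*}(\KW)$ to the identification of $\mathrm{M}$, which can then be done rationally, and it ensures that no torsion appears in bidegrees where the rational group vanishes, thus giving the claimed vanishing when $4\nmid p-q$.

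To pin down $\mathrm{M}$ explicitly I would exploit the duality between cooperations and stable operations. By the first theorem a rational stable operation is determined by its values on the powers $\beta^m$, and $\KW^{*,*}(\KW)$ contains in particular analogues of the Adams operations $\psi^{2i+1}$ acting on $\beta^m$ by multiplication by $(2i+1)^{2m}$. A rational cooperation $\sum c_m v^m$ then lies in $\mathrm{M}$ precisely when $\sum c_m (2i+1)^{2m}\in \W(k)$ for every $i\ge 0$, and a standard Mahler-type argument applied to the sequence of odd squares $1,9,25,\dots$ identifies this sublattice as the $\Z$-span of the polynomials $f_{j,n}$. The main obstacle will be the existence direction: one must actually produce elements of $\KW_{*,*}(\KW)$ realizing each $f_{j,n}$ with its prescribed denominator $4^j(2j)!$, which likely requires either a geometric construction on the ind-scheme representing $\KW$ or a transfer-and-Bott-resolution argument paralleling the classical Adams--Harris--Switzer computation of $\KO_*(\KO)$ in topology.
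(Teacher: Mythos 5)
Your proposal is a strategy outline whose two load-bearing steps are precisely the ones you leave open, and one of its concrete claims is false. The asserted splitting of $\KW\wedge\KW$ as a left $\KW$-module into shifted copies indexed by $\mathrm{M}$ is essentially equivalent to the theorem, and "transporting the Anderson splitting" is not available without first doing a computation of this kind; the paper neither uses nor needs such a splitting. Instead it computes $\KW_{*,*}(\KW)=\KW_{*,*}(\KO)$ directly as a colimit $\varinjlim_n\bigoplus_{j\ge 0}\KW_{*+8(n-j),*+4(n-j)}(\pt)\,\overline{b}^\vee_{2j+1}$, using the cell structure of $\KO$ through the spaces $\HGr$ (Lemma~\ref{lm:truncated_limit}), the computation of $\KW_{*,*}(\HGr_+)$ and the duality $D_q$ (Theorems~\ref{thm:homological_grassmannian},~\ref{thm:homological_HGr}), and transition maps $S^\vee_\Z$ made explicit by the triple-tensor-product formula of Lemma~\ref{lm:formal_group_law}. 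In that setup the vanishing for $4\nmid p-q$ is immediate term by term from $\W^{[m]}(k)=0$ for $4\nmid m$, and the integral lattice is obtained by noting that the terms of the colimit are torsion-free, so the colimit injects into its rationalization, and then computing the images of the canonical maps $\phi_n$ by an explicit induction; the denominator $4^j(2j)!$ is exactly $\pm\prod_{i=1}^{j}c'_{2i+1}$ for the off-diagonal entries $c'_{2i+1}=-8i(2i-1)$ of $S^\vee_\Z$. Your approach, if completed, would buy a more structural (AHS-style) description, but the "existence direction" you defer is the entire content of the theorem.

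Moreover, your proposed characterization of $\mathrm{M}$ is incorrect as stated: $f_{0,1}=v^{-1}$ is a generator of $\mathrm{M}$, yet $f_{0,1}(9)=1/9\notin\Z$, so $\mathrm{M}$ is \emph{not} the set of Laurent polynomials taking integer values at all odd squares. As in the classical Adams--Harris--Switzer picture, the correct integrality condition must allow denominators prime to the evaluation point (values in $\Z[\tfrac{1}{2i+1}]$ or a two-variable homogeneous formulation), and establishing even the corrected condition would require integral Adams operations on $\KW$ and an argument that evaluation against them detects the integral lattice --- neither of which is constructed in your sketch or in the paper. Since the "Mahler-type" step rests on this characterization, the identification of $\mathrm{M}$ in your argument does not go through as written.
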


These theorems show that the algebras of stable operations and cooperations in derived Witt theory with rational coefficients have structure similar to the well-known case of (topological) $\K$-theory of real vector bundles $\KO^{top}$. This is not an accidental coincidence; these theories have quite a lot in common. $\KO^{top}$ is built out of real vector bundles and every real vector bundle over a compact space admits a scalar product providing an isomorphism with the dual bundle. Derived Witt theory, roughly speaking, is built out of vector bundles with a fixed isomorphism with the dual bundle. In the motivic setting the element $\eta$ is invertible in derived Witt theory. Real points of the Hopf map give a double cover of $\mathrm{S}^1$, i.e. real points of $\eta$ correspond to $2\in\Z\cong\pi_0^{st}$. Thus $\KO^{top}_{1/2}$ ($\K$-theory of real vector bundles with inverted $2$) should be a nice approximation to derived Witt groups. It is well-known that $(\KO^{top}_{1/2})^{n}$ is $4$-periodic in $n$ with 
\[
(\KO^{top}_{1/2})^{0}(\pt)=\Z[\tfrac{1}{2}],\quad (\KO^{top}_{1/2})^{n}(\pt)=0,\, n=1,2,3.
\]
The same holds for derived Witt theory: $\W^{[n]}$ is $4$-periodic in $n$ with 
\[
\W^{[0]}(\pt)=\W(k),\quad \W^{[n]}(\pt)=0,\,n=1,2,3.
\]
In fact, over the real numbers one can show that the (real) realization functor takes the motivic spectrum $\KW$ to the spectrum $\KO^{top}_{1/2}$ and there are deep theorems comparing $\W^{[n]}(X)$ to $(\KO^{top})^{n}(X(\mathbb{R}))$ for an algebraic variety $X$ over the field of real numbers, see \cite{Br84,KSW15}. Moreover, in a private communication Oliver R\"ondigs outlined to me a strategy how one can obtain a description of $(\KW\otimes \Z[\tfrac{1}{2}])_{*,*}(\KW\otimes \Z[\tfrac{1}{2}])$ and $\KW_\Q^{*,*}(\KW_\Q)$ over a base field of characteristic zero applying Brumfiel's theory \cite{Br84} to the well-known computation of cooperations and rational operations in topology \cite{AHS71}.

The algebras of stable operations and cooperations in $\KO_\Q^{top}$ can be described as follows. Denote $\beta^{top}\in (\KO^{top}_\Q)^{-4}(\pt)$  the element inducing periodicity $(\KO_\Q^{top})^{n+4}\cong (\KO_\Q^{top})^n$. Every stable operation is uniquely determined by its values on the powers of $\beta^{top}$ yielding an isomorphism 
\[
(\KO_\Q^{top})^*(\KO_\Q^{top})\cong \bigoplus_{n\in \Z} (\beta^{top})^n \prod_{m\in \Z} \Q,
\]
while for the cooperations one has
\[
(\KO_\Q^{top})_*(\KO_\Q^{top})\cong \Q[\beta_l^{\pm 1}, \beta_r^{\pm 1}],
\]
where $\beta_r$ and $\beta_l$ are similar to the ones from Theorem~\ref{thm:intrcoop}.

Computations of $(\KO_\Q^{top})^*(\KO_\Q^{top})$ and $(\KO_\Q^{top})_*(\KO_\Q^{top})$ could be carried out quite easily using Serre's theorem about finiteness of stable homotopy groups of spheres. In the motivic setting the analogous result on stable homotopy groups is not completely settled, moreover, our motivation is just the opposite one. It was pointed out to me by Marc Levine that the above computations of stable operations and cooperations in $\KW_\Q$ combined with the technique developed by Cisinski and Deglise \cite{CD09} could possibly yield the motivic version of Serre's finiteness. This problem is addressed in a forthcoming paper \cite{ALP15}. 

Our approach to the computation of stable operations and cooperations in $\KW_\Q$ and cooperations in $\KW$ is straightforward. The spectrum $\KW$ is obtained by localization from the spectrum $\KO$ representing higher Grothendieck-Witt groups, hence
\begin{gather*}
\KW^{*,*}_\Q(\KW_\Q)=\KW^{*,*}_\Q(\KO),\quad (\KW_\Q)_{*,*}(\KW_\Q)=(\KW_\Q)_{*,*}(\KO),\\
\KW_{*,*}(\KW)=\KW_{*,*}(\KO).
\end{gather*}
The odd spaces in the spectrum $\KO$ are all the same and coincide with the infinite quaternionic Grassmannian $\HGr$. Derived Witt theory of $\HGr$ is known to be given by power series in characteristic classes  \cite[Theorem~9.1]{PW10b}. The pullbacks along the structure maps of $\KO$ can be described explicitly using the following computation of characteristic classes of triple tensor product of rank two symplectic bundles  (Lemma~\ref{lm:formal_group_law}).
\begin{lemma}
Let $E_1,E_2$ and $E_3$ be rank $2$ symplectic bundles over a smooth variety $X$. Put $\xi_i = b_1^{\KW}(E_i)\in \KW^{4,2}(X)$ and denote $\xi(n_1,n_2,n_3)$ the sum of all the monomials lying in the orbit of $\xi_1^{n_1}\xi_2^{n_2}\xi_3^{n_3}$ under the action of $S_3$. Then
\begin{align*}
& b^{\KW}_1(E_1\otimes E_2 \otimes E_3)=\beta \xi(1,1,1),\\
& b^{\KW}_2(E_1\otimes E_2 \otimes E_3)=\beta\xi(2,2,0)-2\xi(2,0,0),\\
& b^{\KW}_3(E_1\otimes E_2 \otimes E_3)=\beta\xi(3,1,1) -8\xi(1,1,1),\\
& b^{\KW}_4(E_1\otimes E_2 \otimes E_3)=\beta\xi(2,2,2)+ \xi(4,0,0)- 2\xi(2,2,0).
\end{align*}
\end{lemma}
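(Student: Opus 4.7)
By naturality and the symplectic splitting principle it suffices to verify the identities in the universal case, over $X=\HProj^\infty\times\HProj^\infty\times\HProj^\infty$ with each $E_i$ the pullback of the tautological rank $2$ symplectic bundle along the $i$-th factor. By the Panin--Walter computation recalled earlier in the paper, $\KW^{*,*}(X)$ is the formal power series ring $\KW^{*,*}(\Spec k)[[\xi_1,\xi_2,\xi_3]]$. The triple tensor product is $S_3$-equivariant in its three factors, so each class $b_k^{\KW}(E_1\otimes E_2\otimes E_3)$ is $S_3$-symmetric in the $\xi_i$ and therefore a combination of the orbit sums $\xi(n_1,n_2,n_3)$ with coefficients from $\W(k)[\beta^{\pm 1},\eta^{\pm 1}]$.

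Bidegree bookkeeping cuts this ansatz down drastically. From $\deg\xi_i=(4,2)$, $\deg\beta=(-8,-4)$ and $\deg b_k^{\KW}=(4k,2k)$, every occurring $\beta^a\xi(n_1,n_2,n_3)$ must satisfy $n_1+n_2+n_3-2a=k$; since $E_1\otimes E_2\otimes E_3$ has rank $8$ one has $b_k^{\KW}=0$ for $k>4$; and because each universal class is the pullback of the universal Pontryagin class from $B\Sp_8$ along the tensor-product map $B\Sp_2^{\times 3}\to B\Sp_8$, it is in fact a polynomial of bounded total $\xi$-degree. Thus for each $k\le 4$ only the orbit monomials that appear in the statement can arise, with finitely many undetermined integer coefficients.

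To pin down those coefficients I would use hyperbolic specialization. Setting $E_3$ to a hyperbolic rank $2$ symplectic bundle $H$ forces $\xi_3=0$ and identifies $E_1\otimes E_2\otimes H$ with the symplectic hyperbolic of the rank $4$ orthogonal bundle $E_1\otimes E_2$. The symplectic Borel classes of such hyperbolic bundles are expressible, by standard formulas of $\Sp$-oriented cohomology, in terms of characteristic classes of $E_1\otimes E_2$; an auxiliary two-factor formula for those characteristic classes is obtained by the same universal-case strategy on $\HProj^\infty\times\HProj^\infty$, using the description of a rank $2$ symplectic class as $[H]+\beta\xi$ in the appropriate Grothendieck--Witt group. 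By $S_3$-symmetry the analogous constraints at $\xi_1=0$ and $\xi_2=0$ follow automatically, and the combined linear system has as its unique solution precisely the integer coefficients $-2$, $-8$ and $1$ of the statement.

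The main obstacle is the subsidiary rank $4$ tensor computation combined with the identification of Borel classes of symplectic hyperbolic bundles: the normalizations of $\beta$ and of the hyperbolic functor have to be kept consistent so that the integer coefficients come out with the stated signs and magnitudes. Once these auxiliary pieces are in hand, the four identities of the lemma follow by routine symmetric-function bookkeeping.
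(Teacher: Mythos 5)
Your overall strategy (reduce to the universal case, expand in the orbit sums $\xi(n_1,n_2,n_3)$ by $S_3$-symmetry, and determine coefficients by specialization) is genuinely different from the paper's, but as described it does not close. The paper instead first proves that for a rank $8$ symplectic bundle $E$ the classes $b_k^{\KW}(E)$ are universal $\Z[\beta^{\pm1}]$-combinations of the Witt classes $[\Lambda^j E]$ (via the splitting principle applied to $E$ itself), and then computes $\Lambda^j(E_1\otimes E_2\otimes E_3)$ by a character calculation in $\mathrm{Rep}(\Sp_2\times\Sp_2\times\Sp_2)=\Z[\chi_i+\chi_i^{-1}]$; the formulas then drop out with no undetermined coefficients.

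The genuine gap in your plan is the determination step. The specialization $E_3=\Hplane_-$ forces $\xi_3=0$, and its $S_3$-translates force $\xi_1=0$ or $\xi_2=0$; every such specialization annihilates \emph{every} monomial divisible by $\xi_1\xi_2\xi_3$. But the terms you most need to pin down are exactly of this form: $\xi(1,1,1)$ (all of $b_1$, and the $-8$ term in $b_3$), $\xi(3,1,1)$, and $\xi(2,2,2)$. Your linear system therefore cannot see them, and the claim that it "has as its unique solution" the stated coefficients is false --- it only constrains the coefficients of $\xi(2,0,0)$, $\xi(2,2,0)$, $\xi(4,0,0)$ and their like. The coefficient $-8$ in $b_3$ is precisely the one that drives the later computation (it produces the $c_{2j+1}$ in Lemma~\ref{lm:main_computation}), so this is not a cosmetic omission. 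To recover these coefficients you would need a specialization that does not kill $\xi_1\xi_2\xi_3$, e.g.\ restricting to $\HProj^1\times\HProj^1\times\HProj^1$ and computing $[\Lambda^jE]$ there directly --- which is essentially the paper's $\lambda$-operation argument.

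A secondary issue: the assertion that the universal class pulled back from $B\Sp_8$ is "a polynomial of bounded total $\xi$-degree" is not automatic. By Theorem~\ref{thm:cohomological_HGr} the pullback lives a priori in a homogeneous power series ring, and since $\beta$ is invertible a homogeneous element can have unbounded polynomial degree; for a general symplectically oriented theory (e.g.\ $\mathrm{MSp}$) the analogous classes genuinely are infinite series. Polynomiality for $\KW$ is a consequence of the expression of Borel classes through exterior powers, i.e.\ of the very computation you are trying to avoid, so it cannot be assumed as an input to the degree bookkeeping.
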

\noindent This computation is a derived Witt analogue of the equality 
\[
c_1^\K(L_1\otimes L_2)=c_1^\K(L_1)+c_1^\K(L_2)-c_1^\K(L_1)c_1^\K(L_2)
\]
in $\K$-theory, i.e. an analogue of a formal group law. It turns out that the inverse limit $\varprojlim \KW^{*+8n+4,*+4n+2}_\Q(\HGr)$ can be easily computed yielding the desired answer, while the $\varprojlim^1$ term vanishes. For the cooperations we use the same strategy with the result of Panin and Walter on the derived Witt theory of $\HGr$ replaced by Theorem~\ref{thm:homological_HGr} providing us with the following description of derived Witt homology of $\HGr$ (see Definitions~\ref{def:HGr_Hspace},~\ref{def:gamma} for the details).
\begin{theorem}
Let $k$ be a field of characteristic not $2$. Then there is a canonical isomorphism of $\KW^{*,*}(\Spec k)\cong \W(k)[\eta^{\pm 1},\beta^{\pm 1}]$-algebras
\[
\KW_{*,*}(\HGr_+)\cong \W(k)[\eta^{\pm 1},\beta^{\pm 1}][x_1,x_2,\dots].
\]
\end{theorem}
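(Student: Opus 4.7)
The plan is to deduce the homology computation from the Panin-Walter cohomology computation by Hopf-algebraic duality, exploiting the H-space structure on $\HGr$ coming from direct sum of symplectic bundles.

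First, I would work with the filtration of $\HGr$ by finite quaternionic Grassmannians $\HGr(r,N)$. By \cite[Theorem~9.1]{PW10b} each $\KW^{*,*}(\HGr(r,N))$ is a free $\KW^{*,*}(\Spec k)$-module of finite rank, with a basis of monomials in the Borel classes $b^{\KW}_j$ of the tautological symplectic subbundle. A Künneth isomorphism for the products $\HGr(r,N)\times \HGr(r',N')$, following from the symplectic projective bundle theorem, then yields that $\KW_{*,*}(\HGr(r,N)_+)$ is the graded $\KW^{*,*}(\Spec k)$-dual of $\KW^{*,*}(\HGr(r,N))$. Passing to the filtered colimit, first in $N$ and then in $r$, and using that $\KW$-homology commutes with filtered colimits of spaces, I obtain that $\KW_{*,*}(\HGr_+)$ is additively a free $\KW^{*,*}(\Spec k)$-module with basis dual to the monomial basis in an infinite sequence of Borel variables.

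To upgrade this to an isomorphism of algebras, I would invoke the H-space structure $\mu\colon \HGr\times \HGr\to \HGr$ of Definition~\ref{def:HGr_Hspace}. It makes $\KW_{*,*}(\HGr_+)$ a graded-commutative $\KW^{*,*}(\Spec k)$-algebra via $\mu_*$, and dually equips $\KW^{*,*}(\HGr)$ with a continuous coproduct $\mu^*$. Using the symplectic splitting principle to reduce to sums of rank-two symplectic bundles, together with the rank-two formulas in Lemma~\ref{lm:formal_group_law}, I would verify the Whitney-type identity
\[
\mu^*(b^{\KW}_k)=\sum_{i+j=k}b^{\KW}_i\otimes b^{\KW}_j,\qquad b^{\KW}_0=1.
\]
The classes $x_i\in \KW_{*,*}(\HGr_+)$ of Definition~\ref{def:gamma} are arranged (up to normalization by powers of $\beta$ and $\eta$) to be the elements dual to $(b^{\KW}_1)^i$ under the evaluation pairing.

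With the binomial coproduct above in hand, the conclusion follows from the standard fact that the graded dual of the polynomial Hopf algebra with this coproduct is again polynomial, generated by duals of the powers of $b^{\KW}_1$---the same computation that recovers $H_*(BU;\Z)=\Z[\beta_1,\beta_2,\dots]$ from $H^*(BU;\Z)=\Z[c_1,c_2,\dots]$. This identifies $\KW_{*,*}(\HGr_+)$ with $\W(k)[\eta^{\pm 1},\beta^{\pm 1}][x_1,x_2,\dots]$ as $\KW^{*,*}(\Spec k)$-algebras. The main obstacle is the first step: establishing the strict duality between $\KW$-cohomology and $\KW$-homology of the finite Grassmannians compatibly with the filtration inclusions, which reduces---given the finite-rank freeness of $\KW^{*,*}(\HGr(r,N))$ in each bidegree---to verifying a Künneth isomorphism implicit in the symplectic projective bundle theorem of \cite{PW10b}.
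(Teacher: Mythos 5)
Your overall strategy --- dualize the Panin--Walter cohomology computation over the finite Grassmannians, pass to the colimit, and identify the ring structure via the H-space structure as the dual of a polynomial Hopf algebra, exactly as for $H_*(BU;\Z)$ --- is the strategy of the paper. However, the step you yourself single out as ``the main obstacle'' contains a genuine gap, and your proposed reduction of it does not work. A K\"unneth isomorphism $\KW^{*,*}(\HGr(2r,2N)\times\HGr(2r',2N'))\cong\KW^{*,*}(\HGr(2r,2N))\otimes\KW^{*,*}(\HGr(2r',2N'))$ is a statement purely about cohomology; it gives no information about $\KW_{*,*}(\HGr(2r,2N)_+)$ and in particular does not imply that the Kronecker pairing identifies homology with the dual module of cohomology. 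For a generalized theory, freeness of $E^{*}(X)$ together with a cohomological K\"unneth formula does not formally determine $E_{*}(X)$: one needs either an honest computation of the homology from a cell structure, or strong dualizability of $\Sigma^\infty X_+$ (which you do not invoke, and which for the smooth affine varieties $\HGr(2r,2n)$ would itself require justification). The paper supplies exactly this missing input: a quaternionic projective bundle theorem in homology (Theorem~\ref{thm:homological_projective_bundle}), proved by running the localization sequence for the geometric decomposition of $\HProj^r$ from Panin--Walter (a closed subvariety $Y\subset\HProj^r$ with $\A^1$-contractible complement and an $\A^2$-bundle $Y\to\HProj^{r-1}$), together with transfer maps and the identity $i_{A}i^{!A}(x)=b_r(E)\cap x$ of Lemma~\ref{lm:pullpush}. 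This is then bootstrapped through flag bundles (Corollary~\ref{cor:homological_fullflag}) to the duality statement for finite Grassmannians (Theorem~\ref{thm:homological_grassmannian}). Without some version of this homological computation your argument does not get off the ground.

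Two smaller points. The Whitney-type coproduct formula $\mu^*(b_k)=\sum_{i+j=k}b_i\otimes b_j$ is simply the Cartan sum axiom $b_t(E\oplus E')=b_t(E)b_t(E')$ of a Borel classes theory applied to the external orthogonal sum classified by $\mu$; Lemma~\ref{lm:formal_group_law} concerns \emph{tensor} products of symplectic bundles and is irrelevant here (it plays no role in the paper's proof of this theorem). Once the duality of the first step is in place, the rest of your argument --- the classes $\chi_i$ of Definition~\ref{def:gamma} are dual to the powers $\xi^i$ on $\HProj^\infty$, hence to $b_1^i$ on $\HGr$, and the graded dual of the polynomial algebra on the $b_i$ with the binomial coproduct is polynomial on these duals --- is correct and is, in substance, what the paper's explicit computation of the pairings $\langle\xi(\lambda),\chi_1^{l_1}\cdots\chi_r^{l_r}\rangle$ accomplishes.
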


The paper is organized in the following way. In Section~2 we recall the well-known definitions and constructions in generalized (co)homology theories representable in the stable motivic homotopy category introduced by Morel and Voevodsky. The next section deals with the definitions and basic properties of cup and cap product in motivic setting. In Section~4 we recall the theory of symplectic orientation in generalized motivic cohomology developed by Panin and Walter \cite{PW10a,PW10b}. The fifth section is dual to the forth one and deals with symplectically oriented homology theories. In Sections~6 and~7 we recall various representability properties of higher Grothendieck-Witt groups and derived Witt theory. In Section~8 we compute characteristic classes of triple tensor product of rank $2$ symplectic bundles. In the last two sections we compute the algebras of stable operations and cooperations in $\KW_\Q$ and give an additive description of cooperations in $\KW$.

{\it Acknowledgement.} I would like to express my sincere gratitude to I.~Panin and M.~Levine for the conversations on the subject of the paper and to O.~R\"ondigs for encouraging me to deal with the integral cooperations. I would although like to thank the anonymous referee for valuable comments and suggestions. The final part of the research was done during my stay in the Institute for Advanced Study and was supported by the National Science foundation under agreement No. 1128155. The research is partially supported by RFBR grants 15-01-03034 and 16-01-00750, by PJSC ``Gazprom Neft'' and by ``Dynasty'' foundation.

\section{Recollection on generalized motivic (co)homology}
In this section we recall some basic definitions and constructions in the unstable and stable motivic homotopy categories $\Ho (k)$ and $\SH (k)$. We refer the reader to the foundational papers \cite{MV99,V98} for an introduction to the subject. We use the version of stable motivic homotopy category based on $\HP^1$-spectra introduced in \cite{PW10c}.

Throughout this paper $k$ is a field of characteristic different from $2$. 

\begin{definition}
Let $\mathrm{Sm}/k$ be the category of smooth varieties over $k$. A \textit{motivic space} over $k$ is a simplicial presheaf on $\mathrm{Sm}/k$. Each $X\in \mathrm{Sm}/k$ defines an unpointed motivic space $\Hom_{\mathrm{Sm}/k}(-,X)$ constant in the simplicial direction. We will often write $\pt$ for $\Spec k$ regarded as a motivic space. Inverting all the weak motivic equivalences in the category of the pointed motivic spaces we obtain the \textit{pointed motivic unstable homotopy category} $\Ho (k)$.
\end{definition}

\begin{definition}
Denote $\Sph^{1,1}=(\A^1-\{0\},1)$, $\Sph^{1,0}=\Sph^1_s=\Delta^1/\partial(\Delta^1)$ and 
\[
\Sph^{p+q,q}=(\Sph^{1,1})^{\wedge q}\wedge (\Sph^{1,0})^{\wedge p}
\]
for the integers $p,q\ge 0$. Denote $\T=\A^1/(\A^1-\{0\})$  the Morel-Voevodsky object which is canonically isomorphic to $\Sph^{2,1}$ in $\Ho(k)$ \cite[Lemma~3.2.15]{MV99}.
\end{definition}

\begin{definition}
Let 
$V=(k^{\oplus 4}, \phi)$, $\phi(\mathbf{x},\mathbf{y})=x_1y_2-x_2y_1+x_3y_4-x_4y_3$, be the standard symplectic vector space over $k$ of dimension $4$. The \textit{quaternionic projective line} $\HProj^1$ is the variety of symplectic planes in $V$. Alternatively, it can be described as $\HProj^1=\bigslant{\Sp_4}{\Sp_2\times \Sp_2}$. Denote $*=\langle e_1,e_2\rangle \in \HProj^1(k)$ for the standard basis $e_1,e_2,e_3,e_4$ of $V$. If not otherwise specified we consider $\HProj^1$ as a pointed motivic space $(\HProj^1,*)$. Let $\HP^1$ be the pushout of
\[
\A^1 \xleftarrow{0} \pt \xrightarrow{*} \HProj^1.
\]
There is an obvious isomorphism $\HP^1\xrightarrow{\simeq} \HProj^1$ in $\Ho(k)$ given by a contraction of $\A^1$ and we usually identify these two objects in $\Ho(k)$.
\end{definition}

\begin{remark}
The only reason that we need $\HP^1$ is Definition~\ref{def:KO} since the morphisms that we use there do no exist for $\HProj^1$.
\end{remark}

\begin{lemma}[{\cite[Theorem~9.8]{PW10c}}]
There exists a canonical isomorphism $\HProj^1\cong \T\wedge \T$ in $\Ho(k)$ .
\end{lemma}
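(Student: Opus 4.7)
The plan is to identify $\HProj^1$ explicitly with a smooth affine split quadric of dimension $4$ via Plücker coordinates, and then invoke the standard motivic $\A^1$-equivalence between such a quadric and $\T^{\wedge 2}$.

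For the first step, I use the Plücker embedding $\HProj^1\subset \Gr(2,4)\subset \mathbb{P}^5$. The Grassmannian is cut out by the Klein quadric $p_{12}p_{34}-p_{13}p_{24}+p_{14}p_{23}=0$, while for a $2$-plane spanned by $v_1,v_2$ a direct computation gives $\phi(v_1,v_2)=p_{12}+p_{34}$; hence $\HProj^1$ is the open complement in the Klein quadric of the hyperplane section $\{p_{12}+p_{34}=0\}$. Restricting to the affine chart $p_{12}+p_{34}=1$ and setting $t=p_{12}$, substitution yields an isomorphism of $\HProj^1$ with the smooth affine quadric
\[
Q_4=\{\,t(1-t)=p_{13}p_{24}-p_{14}p_{23}\,\}\subset \A^5,
\]
under which the basepoint $*=\langle e_1,e_2\rangle$ corresponds to $(1,0,0,0,0)$.

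The second step is to prove $Q_4\simeq \T\wedge\T$ in $\Ho(k)$, the standard motivic equivalence for split even-dimensional affine quadrics. One clean route is to stratify by the closed subvariety $Z=\{p_{13}=p_{14}=0\}\cap Q_4$: its defining equation reduces to $t(1-t)=0$, so $Z\cong \A^2\sqcup \A^2$, one component for each root $t=0,1$. Projection to $(p_{13},p_{14})$ exhibits $Q_4\setminus Z$ as an $\A^2$-bundle over $\A^2\setminus\{0\}$, so $Q_4\setminus Z\simeq \A^2\setminus\{0\}\simeq \Sph^{3,2}$. Motivic purity then produces a cofibre sequence
\[
\Sph^{3,2}\to Q_4\to \mathrm{Th}(N_Z)\simeq \T^{\wedge 2}\vee \T^{\wedge 2},
\]
whose connecting map $\T^{\wedge 2}\vee \T^{\wedge 2}\to \Sigma^{1,0}\Sph^{3,2}=\T^{\wedge 2}$ is (up to sign) the difference $(a,b)\mapsto a-b$. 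Its homotopy fibre is the diagonal $\T^{\wedge 2}$, giving $Q_4\simeq \T\wedge\T$.

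The hard part is identifying this connecting map as the difference, which amounts to checking that the normal bundles to the two components $\{t=0\}$ and $\{t=1\}$ of $Z$ are oriented with opposite signs inside $Q_4$; this is a global sign question that ultimately reflects the split nature of the quadratic form $t(1-t)$. As done in Panin--Walter, one usually sidesteps this issue by instead exhibiting a Nisnevich cover of $Q_4$ by two $\A^1$-contractible opens (obtained by removing each of the two ``poles'' and applying a stereographic retraction) whose intersection is $\A^1$-equivalent to $\Sph^{3,2}$; Mayer--Vietoris then immediately identifies $Q_4$ with the motivic suspension $\Sigma^{1,0}\Sph^{3,2}=\Sph^{4,2}=\T\wedge \T$.
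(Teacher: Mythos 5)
The paper offers no argument here at all: the lemma is imported verbatim from Panin--Walter \cite[Theorem~9.8]{PW10c}, so there is nothing internal to compare against. Your proposal is essentially the standard proof from the literature (Panin--Walter's \cite[\S 3]{PW10a}, and Asok--Doran--Fasel's treatment of split affine quadrics), and its skeleton is sound. The identification of $\HProj^1$ with the affine quadric $Q_4=\{t(1-t)=p_{13}p_{24}-p_{14}p_{23}\}$ via the Pl\"ucker embedding is correct, as is the computation $Z\cong\A^2\sqcup\A^2$ and the equivalence $Q_4\setminus Z\simA \A^2\setminus\{0\}$ --- though for the latter you should note that the naive projection to $(p_{13},p_{14})$ has non-affine transition functions in the fibre coordinates; the clean statement is that $Q_4\setminus Z\to(\A^2\setminus\{0\})\times\A^1_t$ is a torsor under a line bundle, and one then projects off the $\A^1_t$ factor. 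Your first route should simply be deleted: in the \emph{unstable} category $\Ho(k)$ one cannot recover $Q_4$ from a cofibre sequence together with its connecting map (there is no "homotopy fibre of the difference map" argument available), and identifying the connecting map would in any case require the orientation analysis you acknowledge avoiding. The Mayer--Vietoris route is the right one and does give a complete proof --- \emph{provided} the two opens $Q_4\setminus Z_0$ and $Q_4\setminus Z_1$ are $\A^1$-contractible. That contractibility is the entire geometric content of the lemma, and your proposal only gestures at it ("a stereographic retraction"); it is precisely \cite[Theorems~3.1--3.2]{PW10a}, the same input this paper invokes in the proof of Theorem~\ref{thm:homological_projective_bundle}. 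So what you have is a correct and well-organized reduction of the lemma to that contractibility statement, rather than a self-contained proof; to close the gap you would need to write down the explicit $\A^1$-homotopy contracting $\HProj^1\setminus N$ for $N$ one of the two components of $Z$.
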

\begin{corollary}\label{cor:HP1_sphere}
There exists a canonical isomorphism $\HProj^1\cong \Sph^{4,2}$ in $\Ho(k)$.
\end{corollary}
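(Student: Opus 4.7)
The plan is to chain the two isomorphisms already on the table. The immediately preceding lemma (cited from \cite[Theorem~9.8]{PW10c}) supplies a canonical isomorphism $\HProj^1 \cong \T \wedge \T$ in $\Ho(k)$, and Definition~2.3 (via \cite[Lemma~3.2.15]{MV99}) identifies $\T$ canonically with $\Sph^{2,1}$. Composing these gives $\HProj^1 \cong \Sph^{2,1} \wedge \Sph^{2,1}$ in $\Ho(k)$.

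The only remaining task is to unwind the indexing convention $\Sph^{p+q,q} = (\Sph^{1,1})^{\wedge q} \wedge (\Sph^{1,0})^{\wedge p}$. Writing $\Sph^{2,1} = \Sph^{1,1} \wedge \Sph^{1,0}$, one has
\[
\Sph^{2,1} \wedge \Sph^{2,1} = \Sph^{1,1} \wedge \Sph^{1,0} \wedge \Sph^{1,1} \wedge \Sph^{1,0}.
\]
Swapping the middle two factors via the symmetry isomorphism of the smash product on pointed motivic spaces yields $(\Sph^{1,1})^{\wedge 2} \wedge (\Sph^{1,0})^{\wedge 2}$, which is $\Sph^{4,2}$ by definition. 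All of these maps are canonical in $\Ho(k)$, so the composite $\HProj^1 \cong \Sph^{4,2}$ is canonical.

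I do not anticipate any genuine obstacle: the corollary is a purely formal consequence of the lemma, Definition~2.3, and the definition of the bigraded spheres. The one point that deserves to be acknowledged, rather than a hurdle, is that the step $\Sph^{1,1} \wedge \Sph^{1,0} \wedge \Sph^{1,1} \wedge \Sph^{1,0} \cong (\Sph^{1,1})^{\wedge 2} \wedge (\Sph^{1,0})^{\wedge 2}$ uses a symmetry constraint rather than the literal identity; but since smash product on pointed motivic spaces is symmetric monoidal, this is a canonical isomorphism in $\Ho(k)$, and so the overall isomorphism is unambiguous.
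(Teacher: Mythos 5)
Your proposal is correct and follows exactly the route the paper takes: compose the preceding lemma's isomorphism $\HProj^1\cong\T\wedge\T$ with the identification $\T\cong\Sph^{2,1}$ from \cite[Lemma~3.2.15]{MV99}. The extra remark about using the symmetry of the smash product to rearrange $\Sph^{1,1}\wedge\Sph^{1,0}\wedge\Sph^{1,1}\wedge\Sph^{1,0}$ into $(\Sph^{1,1})^{\wedge 2}\wedge(\Sph^{1,0})^{\wedge 2}$ is a fine, if pedantic, elaboration of a step the paper leaves implicit.
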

\begin{proof}
Follows from the lemma applying the canonical isomorphism $\T\cong \Sph^{2,1}$ \cite[Lemma~3.2.15]{MV99}.
\end{proof}

\begin{definition}
An \textit{$\HP^1$-spectrum $A$} is a sequence of pointed motivic spaces $(A_0,A_1,A_2,\dots)$ equipped with structural maps $\sigma_n\colon \HP^1\wedge A_n\to A_{n+1}$. A morphism of $\HP^1$-spectra is a sequence of morphisms of pointed motivic spaces compatible with the structural maps. Inverting the stable motivic weak equivalences as in \cite{Jar00} we obtain the motivic stable homotopy category $\SH(k)=\SH_{\HP^1}(k)$. This category has a canonical symmetric monoidal structure. From now on by a spectrum we mean an $\HP^1$-spectrum.
\end{definition}

\begin{lemma}[{\cite[Theorem~12.1]{PW10c}}]
The stable homotopy categories of $\T$-spectra and of $\HP^1$-spectra are equivalent.
\end{lemma}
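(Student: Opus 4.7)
The plan is to construct a Quillen adjunction between $\HP^1$-spectra and $\T$-spectra and show that it descends to an equivalence on stable homotopy categories. The key ingredient is the canonical isomorphism $\HP^1 \cong \T \wedge \T$ in $\Ho(k)$ provided by the preceding lemma, which identifies $\HP^1$-suspension with a double $\T$-suspension.

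First I would define a ``doubling'' left Quillen functor $\Phi$ from $\HP^1$-spectra to $\T$-spectra by setting $(\Phi B)_{2n} = B_n$, $(\Phi B)_{2n+1} = \T \wedge B_n$, with the structure map from an even level to the next odd level being the identity and the structure map from an odd level to the next even level being induced by $\sigma_n \colon \HP^1 \wedge B_n \to B_{n+1}$ under the identification $\HP^1 \cong \T \wedge \T$. Its right adjoint $\Psi$ extracts every other level, $(\Psi A)_n = A_{2n}$, equipped with structure maps obtained by iterating the $\T$-suspension twice. At the level of model structures one uses the projective/stable model structures from \cite{Jar00}, with respect to which this forms a Quillen pair essentially by inspection.

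The central technical point, and the main obstacle I anticipate, is verifying that formally inverting the $\HP^1$-suspension suffices to invert the $\T$-suspension up to stable equivalence. By the general machinery of stabilization in symmetric monoidal model categories (Hovey, Voevodsky), this reduces to showing that the cyclic permutation on $\T^{\wedge 3}$ is stably homotopic to the identity in $\SH(k)$. For the Morel--Voevodsky object $\T$ this is a classical result of Voevodsky: the $S_3$-action on $\T^{\wedge 3}$ is $\A^1$-homotopic to the trivial action. Granted this symmetry, the stabilization of the $\T$-suspension and of the $\T^{\wedge 2}$-suspension produce equivalent triangulated categories, and the same holds for $\HP^1$-suspension via the identification $\HP^1 \cong \T \wedge \T$.

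It then remains to check that the derived unit $B \to \Psi \Phi B$ and derived counit $\Phi \Psi A \to A$ of the adjunction $(\Phi, \Psi)$ are stable equivalences. The unit is a levelwise isomorphism by construction of $\Phi$, while the counit must recover the odd levels $A_{2n+1}$ of a $\T$-spectrum $A$ from its even levels $A_{2n}$. It suffices to verify this on a set of compact generators $\Sigma^\infty_{\HP^1} X_+ \wedge \Sph^{p,q}$ for $X$ smooth, on which the claim reduces to the statement that the bigraded stable homotopy groups computed in either formalism agree, using the identification $\HP^1 \simeq \Sph^{4,2}$ from Corollary~\ref{cor:HP1_sphere} together with the invertibility of $\T$-suspension on both sides.
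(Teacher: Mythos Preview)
The paper does not give its own proof of this lemma; it simply records the statement with a citation to \cite[Theorem~12.1]{PW10c}. So there is nothing in the paper to compare your argument against beyond noting that your sketch is aiming at the content of that cited theorem.

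Your overall strategy is the standard one and is essentially what is carried out in \cite{PW10c}: use $\HP^1 \simeq \T^{\wedge 2}$ together with the symmetry of $\T$ (the cyclic permutation on $\T^{\wedge 3}$ being $\A^1$-homotopic to the identity) to identify $\HP^1$-stabilization with $\T$-stabilization. One technical point deserves care, however. The isomorphism $\HP^1 \cong \T \wedge \T$ is only available in $\Ho(k)$, not as a map of pointed simplicial presheaves, so your functor $\Phi$ as written is not a strict functor of spectra: you cannot use a homotopy-class identification to define the structure maps of $\Phi B$ on the nose. The usual fix is to pass through an intermediate category of $\T^{\wedge 2}$-spectra, where the doubling construction $\Phi$ is honestly strict, and then compare $\T^{\wedge 2}$-spectra with $\HP^1$-spectra via a zig-zag of Quillen equivalences coming from the weak equivalence $\HP^1 \xrightarrow{\sim} \T^{\wedge 2}$ (this is how \cite{PW10c} proceeds, and is also the shape of Hovey's general comparison results for spectra over weakly equivalent suspension objects). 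With that adjustment your outline is correct.
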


\begin{definition}
Every pointed motivic space $Y$ gives rise to a suspension spectrum
\[
\Sigma^\infty_{\HP^1} Y=(Y,\HP^1\wedge Y,(\HP^1)^{\wedge 2}\wedge Y,\dots).
\] 
Put $\mathbb{S}=\Sigma^\infty_{\HP^1} \pt_+$  for the sphere spectrum.
\end{definition}

\begin{definition}
Let $A=(A_0,A_1,\dots)$ be an $\HP^1$-spectrum and $m$ be an integer. Denote $A\{m\}=(A\{m\}_0,A\{m\}_1,\dots)$ the spectrum given by 
\[
A\{m\}_n=
\left[ 
\begin{array}{ll} 
A_{m+n}, &  m+n\ge 0, \\ 
\pt & m+n<0
\end{array} 
\right.
\]
with the structure maps induced by the structure maps of $A$.
\end{definition}

\begin{definition}
It follows from Corollary~\ref{cor:HP1_sphere} that in $\SH(k)$ there is a canonical isomorphism $(A\wedge \Sph^{4,2})\{-1\}\cong A$. The suspension functors $-\wedge \Sph^{p+q,q}, p,q\ge 0,$ become invertible in $\SH(k)$, so we extend the notation to arbitrary integers $p,q$ in an obvious way.
\end{definition}

\begin{definition} \label{def:bigr}
For $A,B\in \SH(k)$ put 
\begin{gather*}
A^{i,j}(B)=\Hom_{\SH(k)}(B,A\wedge \Sph^{i,j}),\quad A^{*,*}(B)=\bigoplus_{i,j \in \Z} A^{i,j}(B),\\
A_{i,j}(B)=\Hom_{\SH(k)}(\mathbb{S}\wedge \Sph^{i,j},A\wedge B),\quad A_{*,*}(B)=\bigoplus_{i,j\in \Z} A_{i,j}(B).
\end{gather*}
Let $f\colon B\to B'$ be a morphism in $\SH(k)$. Denote 
\[
f^A\colon A^{*,*}(B')\to A^{*,*}(B),\quad f_A\colon A_{*,*}(B)\to A_{*,*}(B')
\]
the natural morphisms given by composition with $f$.
\end{definition}

\begin{remark}
Using suspension spectra we may treat every pointed motivic space as a spectrum, in particular, we may treat a smooth variety $X$ as a suspension spectrum $\Sigma^\infty_{\HP^1} (X_+,+)$. Thus all the definitions involving $A^{*,*}(B)$ and $A_{*,*}(B)$ are applicable to the case of $B$ being a pointed motivic space or a smooth variety.
\end{remark}

\begin{definition}
For $A,B\in \SH(k)$ we have \textit{suspension isomorphisms} 
\[
\Sigma^{p,q}\colon A^{*,*}(B)\xrightarrow{\simeq} A^{*+p,*+q}(B\wedge \Sph^{p,q}),\, \Sigma^{p,q}\colon A_{*,*}(B)\xrightarrow{\simeq} A_{*+p,*+q}(B\wedge \Sph^{p,q})
\]
given by smash-product $-\wedge \id_{\Sph^{p,q}}$. The isomorphisms from \cite[Lemma~3.2.15]{MV99} and Corollary~\ref{cor:HP1_sphere} induce \textit{suspension isomorphisms}
\begin{gather*}
\Sigma_\T\colon A^{*,*}(B)\xrightarrow{\simeq} A^{*+2,*+1}(B\wedge \T),\, \Sigma_\T\colon A_{*,*}(B)\xrightarrow{\simeq} A_{*+2,*+1}(B\wedge \T),\\
\resizebox{\textwidth}{!}{$
\Sigma_{\HProj^1}\colon A^{*,*}(B)\xrightarrow{\simeq} A^{*+4,*+2}(B\wedge \HProj^1),\, \Sigma_{\HProj^1}\colon A_{*,*}(B)\xrightarrow{\simeq} A_{*+4,*+2}(B\wedge \HProj^1),
$}\\
\resizebox{\textwidth}{!}{$
\Sigma_{\HP^1}\colon A^{*,*}(B)\xrightarrow{\simeq} A^{*+4,*+2}(B\wedge \HP^1),\, \Sigma_{\HP^1}\colon A_{*,*}(B)\xrightarrow{\simeq} A_{*+4,*+2}(B\wedge \HP^1).
$}
\end{gather*}
We write $\Sigma^n_\T, \Sigma^n_{\HProj^1}$ and $\Sigma^n_{\HP^1}$ for the $n$-fold composition of the respective suspension isomorphisms.
\end{definition}

\begin{definition}
Let $A=(A_0,A_1,\dots)$ be an $\HP^1$-spectrum. Denote $\Tr_n A$ the spectrum given by 
\[
(\Tr_n A)_m =
\left[ 
\begin{array}{ll} 
A_m, &  m\le n, \\ 
(\HP^1)^{\wedge m-n} \wedge A_m, & m> n
\end{array} 
\right.
\]
with the structure maps induced by the structure maps of $A$.
\end{definition}

\begin{remark}
The obvious map $\Sigma^{\infty}_{\HP^1} A_n\{-n\}\to \Tr_n A$ clearly becomes an isomorphism in $\SH (k)$.
\end{remark}

\begin{lemma} \label{lm:truncated_limit}
Consider $A\in \SH(k)$ and let $B=(B_0,B_1,\dots)$ be an $\HP^1$-spectrum with structure maps $\sigma_n\colon \HP^1\wedge B_n\to B_{n+1}$. Then
\begin{enumerate}
\item
the canonical homomorphism
\[
\varinjlim A_{*+4n,*+2n}(B_n)\to A_{*,*}(B)
\]
is an isomorphism, here the limit is taken with respect to the morphisms
\[
(\sigma_n)_A\circ\Sigma_{\HP^1}\colon A_{*+4n,*+2n}(B_n)\to A_{*+4(n+1),*+2(n+1)}(B_{n+1});
\]
\item
there is a short exact sequence 
\[
0\to \varprojlim\nolimits^1 A^{*+4n-1,*+2n}(B_n) \to A^{*,*}(B) \to \varprojlim_{} A^{*+4n,*+2n}(B_n) \to 0
\]
where the limit is taken with respect to the morphisms
\[
\Sigma_{\HP^1}^{-1}\circ \sigma_n^A\colon A^{*+4(n+1),*+2(n+1)}(B_{n+1})\to A^{*+4n,*+2n}(B_{n}).
\]
\end{enumerate}
\end{lemma}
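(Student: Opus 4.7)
The plan is to realize $B$ as the homotopy colimit of its truncations $\Tr_n B$ in $\SH(k)$ and deduce both parts from general facts about (co)homology and filtered homotopy colimits. The structure maps of $B$ induce a natural sequence
\[
\Tr_0 B \to \Tr_1 B \to \Tr_2 B \to \cdots
\]
whose homotopy colimit is $B$ itself; this is essentially the assertion that the chosen model for the stable motivic category exhibits every $\HP^1$-spectrum as the telescope of its desuspended truncations.

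Next, by the remark preceding the lemma, each $\Tr_n B$ is canonically isomorphic in $\SH(k)$ to $\Sigma^\infty_{\HP^1} B_n\{-n\}$. The $n$-fold suspension isomorphism $\Sigma^n_{\HP^1}$ then yields natural identifications
\[
A_{*,*}(\Tr_n B) \xrightarrow{\cong} A_{*+4n,\, *+2n}(B_n), \qquad A^{*,*}(\Tr_n B) \xrightarrow{\cong} A^{*+4n,\, *+2n}(B_n).
\]
A direct check, tracing through the construction of the transition morphisms $\Tr_n B \to \Tr_{n+1} B$, identifies the induced maps with $(\sigma_n)_A \circ \Sigma_{\HP^1}$ on homology and with $\Sigma_{\HP^1}^{-1} \circ \sigma_n^A$ on cohomology, exactly as in the statement.

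For part (1), I would use that $\SSph \wedge \Sph^{i,j}$ is compact in $\SH(k)$ (being dualizable) and that smashing with $A$ preserves filtered homotopy colimits, so that
\[
A_{i,j}(B) \cong \varinjlim A_{i,j}(\Tr_n B).
\]
Combining with the identifications of the previous paragraph yields the stated colimit description. For part (2), I would apply the standard Milnor short exact sequence for a cohomology theory evaluated on a sequential homotopy colimit,
\[
0 \to \varprojlim\nolimits^1 A^{*-1,*}(\Tr_n B) \to A^{*,*}(B) \to \varprojlim A^{*,*}(\Tr_n B) \to 0,
\]
and substitute the identifications of the previous step; the shift by $-1$ in the topological degree on the left accounts for the $-1$ appearing in the exponent $*+4n-1$ of the lemma.

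The main technical point is to establish, inside the $\HP^1$-spectrum model of Jardine used throughout the paper, both that the tower $\{\Tr_n B\}$ genuinely has $B$ as its homotopy colimit in $\SH(k)$ and that the Milnor $\varprojlim\nolimits^1$ sequence is available there. Both are standard properties of sequential homotopy colimits of spectra, but they have to be checked carefully in the chosen model; once this foundational step is in place, the remaining content of the lemma is a routine bookkeeping of gradings.
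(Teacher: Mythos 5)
Your proposal is correct and follows essentially the same route as the paper, which simply invokes $B=\varinjlim \Tr_n B$ together with a mapping telescope (citing \cite[Lemma~A.34]{PPR09}) and the identification $\Tr_n B\cong \Sigma^\infty_{\HP^1}B_n\{-n\}$ from the preceding remark. Your version just spells out the compactness and Milnor-sequence details that the paper leaves implicit.
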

\begin{proof}
Straightforward, using $B= \varinjlim \Tr_n B$ and a mapping telescope. In the motivic setting see, for example, \cite[Lemma~A.34]{PPR09}.
\end{proof}

\section{Cup and cap product on generalized motivic (co)homology}

In this section we recall the well-known constructions of cup and cap product in generalized (co)homology. A classic reference for this theme in (non-motivic) stable homotopy theory is \cite[III.9]{Ad74}.

\begin{definition}
A \textit{commutative ring spectrum} $A$ is a commutative monoid $(A,m_A\colon A\wedge A\to A,u_A\colon\SSph\to A)$ in $(\SH(k),\wedge,\mathbb{S})$.
\end{definition}

\begin{definition}
Let $(A,m_A,u_A)$ be a commutative ring spectrum and $f\colon B\to C\wedge D$ be a morphism in $\SH(k)$. The \textit{cup product}
\[
\cup_f \colon A^{p,q}(C)\times A^{i,j}(D) \to A^{p+i,q+j}(B)
\]
is given by $a\cup_f b= (m_A\wedge \sigma)\circ(\id_A\wedge \tau_{\Sph^{p,q},A}\wedge \id_{\Sph^{i,j}} ) \circ (a \wedge b) \circ f$,
\[
a\cup_f b =
\left(
\vcenter{\xymatrix{
 B \ar[r]^(0.45)f & C \wedge D   \ar[r]^(0.35){a\wedge b} & A\wedge \Sph^{p,q}\wedge A\wedge \Sph^{i,j} \ar[lld]_(0.7)*!/u3pt/{\labelstyle \id_A \wedge \tau_{\Sph^{p,q},A}\wedge \id_{\Sph^{i,j}} } \\ 
A\wedge A \wedge \Sph^{p,q} \wedge \Sph^{i,j} \ar[rr]^(0.6){m_A\wedge \sigma } & & A\wedge \Sph^{p+i,q+j}
}}
\right), 
\]
where $ \tau_{\Sph^{p,q},A}\colon \Sph^{p,q}\wedge A\xrightarrow{\simeq} A\wedge \Sph^{p,q}$ and $\sigma\colon \Sph^{p,q}\wedge \Sph^{i,j}\xrightarrow{\simeq} \Sph^{p+i,q+j}$ are permutation isomorphisms.  We will usually omit the subscript $f$ from the notation when the morphism is clear from the context. The cup product is clearly bilinear and associative. We are going to use this product in the following special cases:
\begin{enumerate}
\item
Let $U_1,U_2\subset X$ be open subsets of a smooth variety $X$ and 
\[
f\colon X/(U_1\cup U_2)\to X/U_1 \wedge X/U_2
\]
be the morphism induced by the diagonal embedding. Then the above construction gives a cup product
\[
\cup\colon A^{p,q}(X/U_1)\times A^{i,j}(X/U_2) \to A^{p+i,q+j}(X/(U_1\cup U_2)).
\]
In particular, for $U_1=U_2=\emptyset$ we obtain a product
\[
\cup\colon A^{p,q}(X)\times A^{i,j}(X) \to A^{p+i,q+j}(X)
\]
endowing $A^{*,*}(X)$ with a ring structure.
\item
Consider $B\in \SH(k)$ and let 
\[
f_1=\id\colon B\to B\wedge \mathbb{S},\quad f_2=\id\colon B\to \mathbb{S}\wedge B
\]
be the identity maps. Then we obtain cup products
\begin{gather*}
\cup\colon A^{p,q}(B)\times A^{i,j}(\pt) \to A^{p+i,q+j}(B),\\
\cup\colon A^{p,q}(\pt)\times A^{i,j}(B) \to A^{p+i,q+j}(B)
\end{gather*}
endowing $A^{*,*}(B)$ with a structure of $A^{*,*}(\pt)$-bimodule.
\end{enumerate}
\end{definition}

\begin{definition}
Let $\tau_{\Sph^{2,1},\Sph^{2,1}} \colon \Sph^{2,1}\wedge \Sph^{2,1} \xrightarrow{\simeq} \Sph^{2,1}\wedge \Sph^{2,1}$ be the permutation isomorphism and let $(A,m_A,u_A)$ be a commutative ring spectrum. Put
\[
\varepsilon = \Sigma^{-4,-2} \tau_{\Sph^{2,1},\Sph^{2,1}}^A \Sigma^{4,2} u_A\in A^{0,0}(\pt).
\]
Note that $\varepsilon^2=1$.
\end{definition}

\begin{lemma}\label{lm:epsilon_commutativity}
Let $(A,m_A,u_A)$ be a commutative ring spectrum and let $f\colon B\to C\wedge D$ be a morphism in $\SH(k)$. Denote $f^\tau =\tau \circ f\colon B\to D\wedge C$ with $\tau \colon C\wedge D \xrightarrow{\simeq} D\wedge C$ being the permutation isomorphism. Then
\[
a\cup_f b=(-1)^{pi}\varepsilon^{qj}  b\cup_{f^\tau} a \in  A^{p+i,q+j}(B)
\]
for every $a\in A^{p,q}(C)$ and $b\in A^{i,j}(D)$.
\end{lemma}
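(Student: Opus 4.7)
The proof follows the standard strategy for graded commutativity of cup products in a commutative ring spectrum (cf.\ Adams, Part III.9), adapted to the bigraded motivic setting. First I would unravel both composites: since $f^\tau=\tau_{C,D}\circ f$, the morphism $b\cup_{f^\tau}a$ differs from $a\cup_f b$ by applying the braiding inside the target $A\wedge A\wedge \Sph^{p,q}\wedge\Sph^{i,j}$ that interchanges both the two $A$-factors and the two sphere factors. By the naturality of the symmetric monoidal structure, this braiding factors as $\tau_{A,A}\wedge\tau_{\Sph^{p,q},\Sph^{i,j}}$ composed with permutations that are cancelled by the rearrangements already present in the definition of $\cup$. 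Applying the commutativity $m_A\circ\tau_{A,A}=m_A$ absorbs the factor on the $A$-side, so the two composites agree up to the composition with the sphere braiding $\tau_{\Sph^{p,q},\Sph^{i,j}}$ on the target.

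It then suffices to identify the residual automorphism $\tau_{\Sph^{p,q},\Sph^{i,j}}$ (viewed in $A^{0,0}(\pt)$ after composition with $u_A$ and the canonical identifications $\Sph^{p,q}\wedge\Sph^{i,j}\cong\Sph^{p+i,q+j}\cong\Sph^{i,j}\wedge\Sph^{p,q}$) with the scalar $(-1)^{pi}\varepsilon^{qj}$. To this end I would decompose each sphere via $\Sph^{p,q}=(\Sph^{1,1})^{\wedge q}\wedge(\Sph^{1,0})^{\wedge(p-q)}$ and use the coherence axioms to express the braiding as a composite of atomic swaps of three types: $\tau_{\Sph^{1,0},\Sph^{1,0}}$, $\tau_{\Sph^{1,1},\Sph^{1,1}}$, and cross-type swaps $\tau_{\Sph^{1,0},\Sph^{1,1}}$. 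Cross-type swaps become identities under the canonical identifications, the simplicial swaps contribute a Koszul $-1$ per crossing, and the $\Sph^{1,1}$-swaps contribute a motivic class which is related to $\varepsilon$ by applying the very same decomposition to $\Sph^{2,1}=\Sph^{1,1}\wedge\Sph^{1,0}$ in the definition $\varepsilon=\Sigma^{-4,-2}\tau_{\Sph^{2,1},\Sph^{2,1}}^A\Sigma^{4,2}u_A$. Counting the number of crossings of each type gives exactly $qj$ contributions of $\varepsilon$, and the remaining signs collect to $(-1)^{pi}$.

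The main obstacle is the careful bookkeeping of signs: one must verify that the cross-type swaps indeed cancel in pairs under the chosen canonical identifications, and that the count of simplicial-versus-simplicial crossings combines with the signs implicit in the $\Sph^{2,1}$-presentation of $\varepsilon$ to yield precisely $(-1)^{pi}$ rather than some other exponent congruent to it modulo the contribution absorbed into $\varepsilon^{qj}$. Once the sphere-level identity $u_A\circ[\tau_{\Sph^{p,q},\Sph^{i,j}}]=(-1)^{pi}\varepsilon^{qj}$ is established, the lemma follows by a direct comparison of the two composites.
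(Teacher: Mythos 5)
Your strategy is the same as the paper's: reduce, via commutativity of $m_A$ and naturality of the braiding, to identifying the class of $\tau_{\Sph^{p,q},\Sph^{i,j}}$ in $A^{0,0}(\pt)$, and then compute that class by decomposing into atomic swaps. The paper's own proof is exactly this two-step sketch (it records only the identity $a\cup_f b=(\Sigma^{-p-i,-q-j}\tau_{\Sph^{p,q},\Sph^{i,j}})\cup b\cup_{f^\tau}a$ and the fact that the simplicial swap is $-1$), so your first half is fine and your second half is where all the content lives.

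However, the bookkeeping as you describe it does not close. Write $\Sph^{p,q}=(\Sph^{1,1})^{\wedge q}\wedge(\Sph^{1,0})^{\wedge(p-q)}$. The definition $\varepsilon=\Sigma^{-4,-2}\tau_{\Sph^{2,1},\Sph^{2,1}}^A\Sigma^{4,2}u_A$ together with $\Sph^{2,1}\cong\Sph^{1,1}\wedge\Sph^{1,0}$ and the simplicial sign forces $\langle\tau_{\Sph^{1,1},\Sph^{1,1}}\rangle=-\varepsilon$ (not $\varepsilon$), whatever the cross-type swaps do, since they occur in cancelling pairs inside $\tau_{\Sph^{2,1},\Sph^{2,1}}$. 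If, as you propose, the cross-type swaps are identities, the total class becomes
\[
(-1)^{(p-q)(i-j)}\bigl(-\varepsilon\bigr)^{qj}=(-1)^{(p-q)(i-j)+qj}\varepsilon^{qj}=(-1)^{pi+pj+qi}\varepsilon^{qj},
\]
which differs from the asserted $(-1)^{pi}\varepsilon^{qj}$ whenever $pj+qi$ is odd (e.g.\ $(p,q)=(1,0)$, $(i,j)=(1,1)$). The count that actually produces $(-1)^{pi}$ requires each cross-type swap $\Sph^{1,0}\leftrightarrow\Sph^{1,1}$ to contribute $-1$: then the exponent of $-1$ is
\[
(p-q)(i-j)+(p-q)j+q(i-j)+qj=(p-q)i+qi=pi,
\]
and the $qj$ factors of $-\varepsilon$ from the Tate--Tate swaps combine with $qj$ of these signs to give $\varepsilon^{qj}$. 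In other words, the Koszul sign is governed by the total topological degree $p$ rather than by the simplicial degree $p-q$. So to complete the argument you must either prove $\langle\tau_{\Sph^{1,0},\Sph^{1,1}}\rangle=-1$ for the canonical identifications in use (this is a statement about the chosen coherence data, not a formal consequence of symmetric monoidality, since only the product $\langle\tau_{\Sph^{1,0},\Sph^{1,1}}\rangle\langle\tau_{\Sph^{1,1},\Sph^{1,0}}\rangle=1$ is automatic), or reorganize the computation so that every circle, simplicial or Tate, contributes one Koszul sign per crossing. As stated, your expected outcome of the verification is the wrong one, even though you correctly identified it as the delicate point.
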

\begin{proof}
Examining the definition one notices that 
\[
a\cup_f b=(\Sigma^{-p-i,-q-j}\tau_{\Sph^{p,q}, \Sph^{i,j}}) \cup b\cup_{f^\sigma} a,
\]
where $\tau_{ \Sph^{p,q}, \Sph^{i,j}}\colon \Sph^{p,q}\wedge \Sph^{i,j}\xrightarrow{\simeq} \Sph^{i,j}\wedge \Sph^{p,q}$ is the permutation isomorphism. By classical homotopy theory one has $\Sigma^{-2,0}(\tau_{\Sph^{1,0},\Sph^{1,0}})=-1$, so the claim follows.
\end{proof}

\begin{definition} \label{def:cap}
Let $(A,m_A,u_A)$ be a commutative ring spectrum and $f\colon B\to C\wedge D$ be a morphism in $\SH(k)$. The \textit{cap product}
\[
\cap_f \colon A^{p,q}(C)\times A_{i,j}(B) \to A_{i-p,j-q}(D)
\]
is given by $a\cap_f x= \Sigma^{-p,-q}((m_A\wedge \tau_{\Sph^{p,q},D})\circ(\id_A\wedge a \wedge \id_D) \circ (\id_A \wedge f) \circ x)$,
\[
a\cap_f x=\Sigma^{-p,-q}\left(
\vcenter{\xymatrix{
\mathbb{S}\wedge  \Sph^{i,j} \ar[r]^x & A\wedge B \ar[r]^(0.45){\id_A\wedge f} & A\wedge C\wedge D \ar[lld]_(0.7)*!/u3pt/{\labelstyle \id_A \wedge a \wedge \id_D} \\ A\wedge A\wedge \Sph^{p,q} \wedge D \ar[rr]^(0.6){m_A\wedge \tau_{\Sph^{p,q},D}} & & A\wedge D \wedge \Sph^{p,q}
}}
\right), 
\]
where $ \tau_{\Sph^{p,q},D}\colon \Sph^{p,q}\wedge D\xrightarrow{\simeq}  D\wedge \Sph^{p,q}$ is the permutation isomorphism. The subscript $f$ will be usually omitted from the notation when the morphism is clear from the context. We are going to use this product in the following special cases:
\begin{enumerate}
\item
Let $Y$ be a pointed motivic space and let $f=\Delta \colon Y\to Y\wedge Y$ be the diagonal embedding. Then we obtain the cap product
\[
\cap\colon A^{p,q}(Y)\times A_{i,j}(Y) \to A_{i-p,j-q}(Y).
\]
One can easily check that $(a \cup a') \cap x= a \cap (a' \cap x)$.  This product endows $A_{*,*}(Y)$ with a left $A^{*,*}(Y)$-module structure.
\item
Let $U$ be an open subset of a smooth variety $X$ and $f\colon X/U\to (X/U) \wedge X_+$ be the morphism induced by the diagonal embedding. Then we obtain the cap product
\[
\cap\colon A^{p,q}(X/U)\times A_{i,j}(X/U) \to A_{i-p,j-q}(X).
\]
\item
Consider $B\in \SH(k)$ and let $f=\id \colon B\to B\wedge \mathbb{S}$ be the identity morphism. Then we obtain the \textit{Kronecker pairing}
\[
\langle -,- \rangle \colon A^{p,q}(B)\times A_{i,j}(B) \to A_{i-p,j-q}(\pt)\cong A^{p-i,q-j}(\pt).
\]
\item
Consider $B\in \SH(k)$ and let $f=\id \colon B\to \mathbb{S} \wedge B$ be the identity morphism. Then we obtain a cap product
\[
\cap \colon A^{p,q}(\pt)\times A_{i,j}(B)\to  A_{i-p,j-q}(B).
\]
endowing $A_{*,*}(B)$ with a left $A^{*,*}(\pt)$-module structure.
\end{enumerate}
\end{definition}

\begin{lemma}\label{lm:cap_naturality}
Let $A$ be a commutative ring spectrum. Then for a commutative square in $\SH(k)$
\[
\xymatrix{
C\wedge D \ar[r]^{r\wedge s} & C'\wedge D' \\
B \ar[u]^f \ar[r]^{t} & B' \ar[u]^{f'}
}
\]
and $a\in A^{*,*}(C'),\, x\in A_{*,*}(B)$  we have 
\[
s_A(r^A(a)\cap x)=a\cap t_A(x).
\]
\end{lemma}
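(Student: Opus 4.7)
The plan is to unfold both sides using Definition~\ref{def:cap} and perform a direct diagram chase in $(\SH(k),\wedge)$. Three elementary inputs drive the comparison: (a) the commutativity hypothesis $(r\wedge s)\circ f = f'\circ t$; (b) the tautology $r^A(a)=a\circ r$ together with bifunctoriality of $\wedge$; and (c) naturality of the symmetric braiding in its second variable, namely $(s\wedge\id_{\Sph^{p,q}})\circ \tau_{\Sph^{p,q},D}=\tau_{\Sph^{p,q},D'}\circ(\id_{\Sph^{p,q}}\wedge s)$.

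First I would write both $s_A(r^A(a)\cap_f x)$ and $a\cap_{f'}t_A(x)$ explicitly as morphisms $\SSph\wedge \Sph^{i,j}\to A\wedge D'\wedge \Sph^{p,q}$, discarding the common prefactor $\Sigma^{-p,-q}$. On the left side, $s_A$ acts by postcomposing the defining formula for $r^A(a)\cap_f x$ with $\id_A\wedge s\wedge \id_{\Sph^{p,q}}$. On the right side, $t_A(x)=(\id_A\wedge t)\circ x$ is substituted into the formula for $a\cap_{f'}(-)$, and input (a) turns the composite $(\id_A\wedge f')\circ (\id_A\wedge t)\circ x$ into $(\id_A\wedge r\wedge s)\circ(\id_A\wedge f)\circ x$.

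Next I would factor $\id_A\wedge r\wedge s = (\id_A\wedge \id_{C'}\wedge s)\circ(\id_A\wedge r\wedge \id_D)$. By (b) and bifunctoriality of $\wedge$, the composition $(\id_A\wedge a\wedge \id_{D'})\circ(\id_A\wedge \id_{C'}\wedge s)\circ(\id_A\wedge r\wedge \id_D)$ equals $(\id_{A\wedge A\wedge \Sph^{p,q}}\wedge s)\circ (\id_A\wedge r^A(a)\wedge \id_D)$. The right-hand side thereby takes the form
\[
(m_A\wedge\tau_{\Sph^{p,q},D'})\circ(\id_{A\wedge A\wedge \Sph^{p,q}}\wedge s)\circ (\id_A\wedge r^A(a)\wedge \id_D)\circ(\id_A\wedge f)\circ x.
\]
Input (c) now lets me slide $s$ across the braiding, rewriting the prefix $(m_A\wedge\tau_{\Sph^{p,q},D'})\circ(\id_{A\wedge A\wedge \Sph^{p,q}}\wedge s)$ as $(\id_A\wedge s\wedge \id_{\Sph^{p,q}})\circ(m_A\wedge \tau_{\Sph^{p,q},D})$. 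This is precisely the prefix appearing in the expansion of $s_A(r^A(a)\cap_f x)$, and reattaching $\Sigma^{-p,-q}$ yields the lemma.

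The proof is purely formal and no substantive obstacle arises; the only care required is tracking which tensor factor each map acts on. In practice I would lay out the full commutative diagram once and check the subsquares (monoidal coherence, bifunctoriality, naturality of $\tau$) one at a time.
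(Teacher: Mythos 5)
Your proof is correct and is exactly the verification the paper leaves to the reader (its proof of Lemma~\ref{lm:cap_naturality} is simply ``Straightforward''): unfolding Definition~\ref{def:cap}, substituting $(r\wedge s)\circ f=f'\circ t$, and sliding $s$ past the braiding via naturality of $\tau_{\Sph^{p,q},-}$ is precisely the intended diagram chase. No issues.
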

\begin{proof}
Straightforward.
\end{proof}

\begin{definition} \label{def:duality}
Let $A$ be a commutative ring spectrum and let $p\colon X\to Y$ be a morphism of pointed motivic spaces. Then the pairing
\[
p_A \circ (-\cap-)\colon A^{*,*}(X)\times A_{*,*}(X)\to A_{*,*}(Y)
\]
is $A^{*,*}(Y)$-bilinear. Denote 
\[
D_p\colon A_{*,*}(X)\to \Hom_{A^{*,*}(Y)}(A^{*,*}(X), A_{*,*}(Y))
\]
the adjoint homomorphism of left $A^{*,*}(X)$-modules.
\end{definition}

\begin{definition} \label{def:right_cap}
Let $(A,m_A,u_A)$ and $(B,m_B,u_B)$ be commutative ring spectra. The product
\[
-\cprod-\colon A_{p,q}(B)\times A_{i,j}(B) \to A_{i+p,j+q}(B)
\]
is given by $x\cprod y = (m_A\wedge m_B) \circ (\id_A\wedge \tau_{B,A} \wedge \id_B) \circ (y\wedge x) \circ \sigma $,
\[
x\cprod y=\left(
\vcenter{\xymatrix{
\mathbb{S}\wedge \Sph^{i+p,j+q} \ar[r]^(0.45)\sigma & \mathbb{S}\wedge \Sph^{i,j} \wedge \mathbb{S}\wedge \Sph^{p,q} \ar[r]^(0.5){y\wedge x} &  A\wedge B \wedge A \wedge B  \ar[lld]_(0.7)*!/u3pt/{\labelstyle \id_A\wedge \tau_{B,A} \wedge \id_B} \\ 
 A\wedge A\wedge B \wedge B \ar[rr]^(0.6){m_A\wedge m_B} & & A\wedge B
}}
\right),
\]
where $\sigma\colon \Sph^{i+p,j+q} \xrightarrow{\simeq} \Sph^{i,j} \wedge \Sph^{p,q}$ and $\tau_{B,A}\colon B\wedge A\xrightarrow{\simeq} A\wedge B$ are the permutation isomorphisms. This product endows $A_{*,*}(B)$ with a ring structure. Moreover, one immediately checks that it agrees with the cap product introduced in the end of Definition~\ref{def:cap} under the homomorphism 
\[
A^{-p,-q}(\pt) \simeq A_{p,q}(\pt)\xrightarrow{(u_B)_A} A_{p,q}(B).
\]
\end{definition}

\section{Symplectically oriented cohomology theories}

In this section we provide a list of results from the theory of symplectic orientation in generalized motivic cohomology developed in \cite{PW10a}. 

\begin{definition}\label{def:symplectic_grassmannians}
We adopt the following notation dealing with Grassmannians and flags of symplectic spaces (cf. \cite{PW10a}).
\begin{itemize}
\item
$\Hplane_{-}=\left( k^{\oplus 2}, \begin{pmatrix} 0 & 1 \\ -1 & 0 \end{pmatrix} \right)$ is the \textit{standard symplectic plane}.
\item
$\HGr(2r,2n)=\bigslant{\Sp_{2n}}{\Sp_{2r}\times \Sp_{2n-2r}}$ is the \textit{quaternionic Grassmannian}. Alternatively, it can be described as the open subscheme of $\Gr(2r,\Hplane_{-}^{\oplus n})$ parametrizing subspaces on which the standard symplectic form is nondegenerate. 
\item
$\mathcal{U}^s_{2r,2n}$ is the tautological rank $2r$ symplectic vector bundle over $\HGr(2r,2n)$.
\item
$\HProj^n=\HGr(2,2n+2)$ is the \textit{quaternionic projective space}. 
\item
$\Hstruct(1)=\mathcal{U}^s_{2,n+2}$ is the tautological rank two symplectic vector bundle over $\HProj^n$.
\item
$\HFlag(2^{r},2n)=\bigslant{\Sp_{2n}}{\Sp_{2}\times \dots \times \Sp_{2}\times \Sp_{2n-2r}}$ is the \textit{quaternionic flag variety}. Alternatively, it can be described as the variety of flags $V_2\le V_4\le\dots V_{2r}\le \Hplane_{-}^{\oplus n}$ such that $\dim V_{2i}=2i$ and the restriction of the symplectic form is nondegenerate on $V_{2i}$ for every $i$. 
\item
$\HGr(2r,E), \HProj(E), \HFlag(2^r,E)$ are the relative versions of the above varieties defined for a rank $2n$ symplectic bundle $E$ over a smooth variety $X$.
\end{itemize}
\end{definition}

\begin{definition}[{cf. \cite[Definition~14.2]{PW10a} and \cite[Definition 12.1]{PW10b}}] \label{def:symplectic_orientation}
A \textit{symplectic orientation} of a commutative ring spectrum $A$ is a rule which assigns to each rank $2n$ symplectic bundle $E$ over a smooth variety $X$ an element $\thc(E)=\thc^A(E)\in A^{4n,2n}(E/(E-X))$ with the following properties:
\begin{enumerate}
\item
For an isomorphism $u\colon E\xrightarrow{\simeq} E'$ one has $\thc(E)=u^A \thc(E')$.
\item
For a morphism of varieties $f\colon X\to Y$, symplectic bundle $E$ over $Y$ and pull back morphism $f_E\colon f^*E\to E$ one has $f_E^A \thc(E)=\thc(f^*E)$.
\item
The homomorphisms $-\cup \thc(E)\colon A^{*,*}(X)\to A^{*+4n,*+2n}(E/(E-X))$ are isomorphisms.
\item
We have $\thc(E\oplus E')=q_1^A \thc(E)\cup q_2^A \thc(E')$ where $q_1,q_2$ are the projections from $E\oplus E'$ to its factors.
\end{enumerate}
We refer to the classes $\thc(E)$ as \textit{Thom classes}. A commutative ring spectrum $A$ with a chosen symplectic orientation is called a \textit{symplectically oriented spectrum}. 
\end{definition}

\begin{lemma}\label{lm:thom_trivial}
Let $A$ be a symplectically oriented spectrum, $X$ be a smooth variety and let $p\colon X\to \pt$ be the projection. Identify $\Hplane_{-}^{\oplus r}/(\Hplane_{-}^{\oplus r} - \{0\})\cong \T^{\wedge 2r}$. Then 
\[
\thc(p^*\Hplane_{-}^{\oplus r})=a\Sigma_\T^{2r} 1_X
\]
for some invertible element $a\in A^{0,0}(\pt)$.
\end{lemma}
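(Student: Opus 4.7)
The plan is to reduce to the case $X = \pt$ and then to $r = 1$ using the four axioms of Definition~\ref{def:symplectic_orientation}, obtaining $a = a_1^r$ for a universal unit $a_1 \in A^{0,0}(\pt)$ depending only on the symplectic orientation.

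First I would apply the naturality axiom to the projection $p\colon X \to \pt$. Since $p^*\Hplane_-^{\oplus r}$ is trivial, the pair $(p^*\Hplane_-^{\oplus r}, p^*\Hplane_-^{\oplus r} - X)$ is identified with $X_+ \wedge \T^{\wedge 2r}$ and the pullback map $p_E\colon p^*\Hplane_-^{\oplus r} \to \Hplane_-^{\oplus r}$ becomes $p_+ \wedge \id_{\T^{\wedge 2r}}$. By naturality of the suspension isomorphism $\Sigma_\T^{2r}$, it is enough to prove the formula $\thc(\Hplane_-^{\oplus r}) = a \cdot \Sigma_\T^{2r} 1_{\pt}$ over the point.

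Second, iterated use of the Whitney sum axiom writes $\thc(\Hplane_-^{\oplus r})$ as the cup product $q_1^A\thc(\Hplane_-) \cup \cdots \cup q_r^A\thc(\Hplane_-)$. Under the identification of $\Hplane_-^{\oplus r}/(\Hplane_-^{\oplus r} - 0)$ with $\T^{\wedge 2r}$ this becomes the iterated external product of $r$ copies of $\thc(\Hplane_-)$, reducing to the case $r = 1$. All bidegrees $(4,2)$ are even in both indices, so Lemma~\ref{lm:epsilon_commutativity} produces no sign corrections when rearranging the smash factors.

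For $r = 1$, the suspension isomorphism $\Sigma_\T^2\colon A^{0,0}(\pt) \xrightarrow{\simeq} A^{4,2}(\T^{\wedge 2})$ determines a unique $a_1 \in A^{0,0}(\pt)$ by $\thc(\Hplane_-) = a_1 \cdot \Sigma_\T^2 1_{\pt}$. The Thom isomorphism axiom (3) says that $-\cup \thc(\Hplane_-)\colon A^{*,*}(\pt) \to A^{*+4,*+2}(\T^{\wedge 2})$ is an isomorphism; composing with $\Sigma_\T^{-2}$ identifies it with multiplication by $a_1$ on $A^{*,*}(\pt)$, so $a_1$ is a unit whose inverse is the image of $1$ under this automorphism's inverse. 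Combining the three reductions yields $\thc(p^*\Hplane_-^{\oplus r}) = a_1^r \cdot \Sigma_\T^{2r} 1_X$, and we take $a = a_1^r$.

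The main conceptual step is the invocation of axiom (3) to upgrade the bare identification $\thc(\Hplane_-) = a_1 \Sigma_\T^2 1_{\pt}$ to the invertibility of $a_1$. The rest is formal bookkeeping with the identifications of Thom spaces and the compatibility of the unit $u_A$ with cup product, which ensures that the $r$-fold external product of $a_1 \Sigma_\T^2 1_{\pt}$ really equals $a_1^r \Sigma_\T^{2r} 1_{\pt}$.
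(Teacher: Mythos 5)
Your proof is correct and follows essentially the same route as the paper: over the point, compare $-\cup\thc$ with the suspension isomorphism $-\cup\Sigma_\T^{2r}1_{\pt}$ to extract a scalar whose invertibility comes from axiom (3), then transfer the identity to $X$ by naturality of Thom classes. The only difference is that the paper applies axiom (3) directly to the rank $2r$ bundle instead of reducing to $r=1$ via the Whitney sum axiom; your extra step is harmless and just yields the slightly finer statement $a=a_1^r$.
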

\begin{proof}
We have the following isomorphisms:
\[
A^{0,0}(\pt) \xrightarrow{-\cup \Sigma_\T^{2r} 1_{\pt}} A^{4r,2r} (\T^{\wedge 2r})\cong A^{4r,2r}(\Hplane_{-}^{\oplus r}/(\Hplane_{-}^{\oplus r} - \{0\})) \xleftarrow{-\cup  \thc(\Hplane_{-}^{\oplus r})} A^{0,0}(\pt),
\]
thus $\thc(\Hplane_{-}^{\oplus r})=a \Sigma_\T^{2r} 1_{\pt}$ for some invertible $a\in A^{0,0}(\pt)$. The claim follows from the functoriality of Thom classes.
\end{proof}

\begin{remark}
There is a canonical bijection between the sets of symplectic orientations satisfying additional condition of normalization ($\thc(\Hplane_{-})=\Sigma^2_\T 1$) and homomorphisms of monoids $\mathrm{MSp}\to A$. See \cite[Theorem 12.2, 13.2]{PW10b} for the details.
\end{remark}

\begin{remark}
The main example of a symplectically oriented cohomology theory that we are interested in is higher Grothendieck-Witt groups (hermitian $\K$-theory). See Definition~\ref{def:KO} and Theorems~\ref{thm:stable_representability},~\ref{thm:KO_orientation} for the details.
\end{remark}

\begin{definition}[{\cite[Definition 14.1]{PW10a}}]
A theory of \textit{Borel classes} on a commutative ring spectrum $A$ is a rule which assigns to every symplectic bundle $E$ over a smooth variety $X$ a sequence of elements $b_i(E)=b^A_i(E)\in A^{4i,2i}(X), i\ge 1$, satisfying
\begin{enumerate}
\item
For $E\cong E'$ we have $b_i(E)=b_i(E')$ for all $i$.
\item
For a morphism of varieties $f\colon X\to Y$ and symplectic bundle $E$ over $Y$ we have $f^Ab_i(E)=b_i(f^*E)$ for all $i$.
\item
For every variety $X$ the homomorphism 
\[
A^{*,*}(X)\oplus A^{*-4,*-2}(X) \to A^{*,*}(\HProj^1\times X)
\]
given by $a+a'\mapsto p^A(a)+p^A(a')\cup b_1(\Hstruct(1))$ is an isomorphism. Here $p\colon \HProj^1\times X\to X$ is the canonical projection.
\item
$b_1(\Hplane_{-})=0\in A^{4,2}(\pt)$.
\item
For $E$ of rank $2r$ we have $b_i(E)=0$ for $i>r$.
\item
For $E,E'$ symplectic bundles over $X$ we have $b_t(E)b_t(E')=b_t(E\oplus E')$, where 
\[
b_t(E)=1+b_1(E)t+b_2(E)t^2+\dots\in A^{*,*}(X)[t].
\]
\end{enumerate}
We refer to $b_i(E)$ as \textit{Borel classes} of $E$ and $b_t(E)$ is the \textit{total Borel class}.
\end{definition}

\begin{remark}
In \cite{PW10a} the above classes were called \textit{Pontryagin classes}, but as I learned from I.~Panin it was noted by V.~Buchstaber that these classes act much more like symplectic Borel classes than Pontryagin classes in topology, so we prefer to adopt this new notation. See also \cite[Definition 7]{An15}.
\end{remark}

\begin{theorem}[{\cite[Theorem 14.4]{PW10a}}] \label{thm:bijection_orientations}
Let $A$ be a commutative ring spectrum. Then there is a canonical bijection between the set of symplectic orientations of $A$ and the set of Borel classes theories on $A$.
\end{theorem}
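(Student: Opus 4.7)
The plan is to construct maps in both directions between symplectic orientations and Borel class theories, then verify they are mutually inverse, following the classical Grothendieck pattern for Chern classes adapted to the symplectic setting.

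From a symplectic orientation $\{\thc(E)\}$ I would construct Borel classes as follows. Define the Euler class $e(E) \in A^{4n,2n}(X)$ of a rank $2n$ symplectic bundle $E$ over $X$ as the pullback $z^A \thc(E)$ along the map $z\colon X_+ \to E/(E-X)$ induced by the zero section. For a rank two bundle, set $b_1(E) := e(E)$; the normalization $b_1(\Hplane_-)=0$ follows from Lemma~\ref{lm:thom_trivial} after rescaling the orientation by the invertible element $a$, so one may assume $\thc(\Hplane_-) = \Sigma^2_\T 1$. The projective bundle formula (property (3) in the definition of Borel classes) on $\HProj^1 \times X$ then reduces to the Thom isomorphism combined with Mayer--Vietoris on the open cover $\HProj^1 = (\HProj^1 - \{*\}) \cup (\HProj^1 - \HProj^0)$, identifying each piece up to $\A^1$-homotopy. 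Higher Borel classes $b_i(E)$ and the Whitney sum formula are then obtained via the splitting principle on the quaternionic flag variety $\HFlag(2^n, E)$, where $E$ pulls back to a sum of rank two symplectic subbundles; the pullback $A^{*,*}(X)\to A^{*,*}(\HFlag(2^n, E))$ is injective by iterated projective bundle formulas, which allows one to descend symmetric combinations of the rank two classes back to $X$.

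Conversely, from Borel classes $\{b_i\}$ I would construct Thom classes. For rank two symplectic $E$ over $X$, realize $E$ as the open complement $E \cong \HProj(E\oplus\Hplane_-) \setminus \HProj(\Hplane_-)$ via $v\mapsto \langle v+e\rangle$ for a fixed nonzero $e\in \Hplane_-$, yielding $E/(E-X) \cong \HProj(E\oplus\Hplane_-)/\HProj(\Hplane_-)$. The quaternionic projective bundle formula for $\HProj(E\oplus\Hplane_-) \to X$ then supplies a distinguished class (essentially $b_1$ of the tautological rank two subbundle, corrected by pullbacks from $X$) which descends to the required Thom class $\thc(E)$ in the quotient. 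For higher rank $E$, use the splitting principle on $\HFlag(2^n, E)$ together with property (4) of Definition~\ref{def:symplectic_orientation} to define $\thc(E)$ multiplicatively as a product of rank two Thom classes; descent to $X$ again proceeds by injectivity of the flag variety pullback.

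The principal difficulty is verifying that the two constructions are mutually inverse and that the Thom classes so defined genuinely satisfy the Thom isomorphism property (3) of Definition~\ref{def:symplectic_orientation}. Both points reduce to a systematic, iterated use of the quaternionic projective bundle formula, but the bookkeeping is subtle because the splitting principle is used on both sides and must be applied compatibly. The key compatibility is that the normalization $b_1(\Hplane_-)=0$ corresponds precisely to $\thc(\Hplane_-) = \Sigma^2_\T 1$ via Lemma~\ref{lm:thom_trivial}, and once this identification is fixed, uniqueness of both constructions from the rank two case forces the correspondence to be bijective. A full argument is carried out in \cite[Theorem~14.4]{PW10a}.
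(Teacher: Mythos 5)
Your forward direction (orientation $\Rightarrow$ Borel classes) is essentially what the paper records: the paper only sketches this half, setting $b_1(E)=z^A\thc(E)$ for rank two $E$ and producing the higher classes from the quaternionic projective bundle theorem, deferring all else to \cite[Theorem 14.4]{PW10a}. Two small corrections there. First, no rescaling of the orientation is needed (nor is it permitted, since the bijection is between \emph{all} orientations and \emph{all} Borel class theories, not just normalized ones): $b_1(\Hplane_-)=z^A\thc(\Hplane_-)$ vanishes automatically, because the zero section of a trivial bundle is $\A^1$-homotopic to a nowhere vanishing section, whose composite with $E\to E/(E-X)$ factors through the basepoint. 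Second, descent of symmetric functions of the roots from $\HFlag(2^n,E)$ requires more than injectivity of the pullback; the higher $b_i(E)$ are defined as the coefficients in the projective-bundle relation expressing $\xi^n$ in terms of lower powers (Grothendieck's method), after which the Whitney formula is checked by splitting.

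The genuine gap is in your backward direction. The identification $E\cong\HProj(E\oplus\Hplane_-)\setminus\HProj(\Hplane_-)$, and hence $E/(E-X)\cong\HProj(E\oplus\Hplane_-)/\HProj(\Hplane_-)$, is false: $\HProj(E\oplus\Hplane_-)\to X$ is an $\HProj^1$-bundle of relative dimension $4$, while $E\to X$ has relative dimension $2$, so $E$ cannot be the complement of a section; moreover $v\mapsto\langle v+e\rangle$ produces a line, not a symplectic plane, so it does not even define a map to $\HProj(E\oplus\Hplane_-)$. The line-bundle paradigm $\mathbb{P}(E\oplus 1)\setminus\mathbb{P}(E)\cong E$ has no direct quaternionic analogue, and this is precisely where the real geometric work of Panin--Walter lies: one takes the zero locus $N$ of a canonical section of the tautological rank two bundle on $\HProj(E\oplus\Hplane_-)$; $N$ is an affine bundle over $X$ whose normal bundle is identified with (the pullback of) $E$, while the open complement of $N$ is $\A^1$-equivalent to $X$; deformation to the normal bundle then identifies $\HProj(E\oplus\Hplane_-)/(\HProj(E\oplus\Hplane_-)-N)$ with $E/(E-X)$, and only through this identification does the distinguished class supplied by the projective bundle formula become a Thom class, with axiom (3) of Definition~\ref{def:symplectic_orientation} following from the same localization computation. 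Without this input (or an equivalent one) your construction of $\thc(E)$ from $b_1$ does not get started, and there is nothing for the mutual-inverse verification to reduce to.
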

\begin{proof}
We give a sketch of the definition of a Borel classes theory on a symplectically oriented spectrum. First one defines $b_1(E)=z^A \thc(E)$ for a rank $2$ symplectic bundle $E$ over a smooth variety $X$ and morphism $z\colon X\to E/(E-X)$ induced by the zero section. Then the higher Borel classes are introduced using Theorem~\ref{thm:proj_bundle_cohomology} below. In particular, we have $b_r(E)=z^A \thc(E)$ for a rank $2r$ symplectic bundle $E$. See \cite{PW10a} for the details, but note that we omit the minus sign in front of $b_1(E)$.
\end{proof}

\begin{theorem}[{\cite[Theorem 8.2]{PW10a}}]\label{thm:proj_bundle_cohomology}
Let $A$ be a symplectically oriented spectrum and let $E$ be a rank $2r$ symplectic bundle over a smooth variety $X$. Denote $\HProj(E)$ the relative quaternionic projective space associated to $E$ and put $\xi=b_1(\Hstruct(1))\in A^{4,2}(\HProj(E))$. Then the homomorphism of left $A^{*,*}(X)$-modules 
\[
\bigoplus_{i=0}^{r-1} A^{*-4i,*-2i}(X) \to A^{*,*}(\HProj(E))
\]
given by $\sum\limits_{i=0}^{r-1} a_i\mapsto \sum\limits_{i=0}^{r-1} a_i\cup \xi^i$ is an isomorphism.
\end{theorem}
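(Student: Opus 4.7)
\medskip

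The plan is to prove the statement by induction on $r$, after a preliminary reduction to the case of a trivial symplectic bundle. The base case $r=1$ is vacuous, since a rank-$2$ symplectic bundle $E$ has $\HProj(E)=X$ and the decomposition reduces to the identity on $A^{*,*}(X)$. To reduce to trivial $E$ in general, I would use that $\Sp_{2r}$ is a special algebraic group, so every symplectic bundle becomes trivial on a suitable Zariski open cover of $X$. A standard Mayer--Vietoris argument, combined with induction on the number of opens and the five-lemma, then reduces the claim to $E=X\times \Hplane_{-}^{\oplus r}$ and $\HProj(E)=X\times \HProj^{r-1}$, using that both sides of the asserted isomorphism are natural in $X$ and that $\xi$ pulls back to $\xi$.

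For the case $r=2$ with $E$ trivial, the bundle $\HProj(E)$ is just $X\times\HProj^1$, and the desired decomposition is exactly axiom (3) in the definition of a theory of Borel classes, applied to the first Borel class $\xi=b_1(\Hstruct(1))$. The bijection of Theorem~\ref{thm:bijection_orientations} makes the Borel classes associated to the chosen symplectic orientation available and ensures this is the same $\xi$ appearing in the statement.

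For the inductive step from $r-1$ to $r$ with trivial $E$, I would exploit the cellular filtration $\pt=\HProj^{0}\subset\HProj^{1}\subset\cdots\subset\HProj^{r-1}$, which after smashing with $X_+$ gives a filtration of $X\times\HProj^{r-1}$. The key input is that, in the symplectic orientation setting of \cite{PW10a}, each cofiber $X_+\wedge(\HProj^{n}/\HProj^{n-1})$ is motivically equivalent to the Thom space of a rank-$2n$ symplectic bundle over $X\times\HProj^{n-1}$; the symplectic Thom isomorphism from Definition~\ref{def:symplectic_orientation}(3) then yields short exact sequences that assemble into the required free decomposition once combined with the inductive hypothesis on $\HProj^{n-1}$. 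Multiplication by powers of $\xi$ provides the splitting.

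The main obstacle is the multiplicative identification: one must show that, under the cofiber-sequence/Thom-isomorphism machinery, the new generator contributed at the $n$-th stage coincides with $\xi^n$ up to an invertible scalar. This comes down to the identity $b_n(E')=z^A\thc(E')$ mentioned in the proof of Theorem~\ref{thm:bijection_orientations}, together with a careful compatibility check between the connecting maps of the cofiber sequences and the multiplicative structure coming from the cup product with $\xi$. Everything else in the argument is routine bookkeeping with Thom isomorphisms and long exact sequences.
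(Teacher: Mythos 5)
The paper itself does not prove this statement (it is quoted from \cite[Theorem~8.2]{PW10a}), so I am judging your argument against the proof there and against the dual homological argument the paper does write out (Theorem~\ref{thm:homological_projective_bundle}). Your preliminary reduction is fine: $\Sp_{2r}$ is special, so $E$ trivializes Zariski-locally, and the Mayer--Vietoris reduction to $E$ trivial is exactly what the paper does in the homological case. The two remaining steps, however, both have genuine gaps. The base case $r=2$ is circular: axiom (3) of a Borel classes theory is \emph{not} among the axioms of a symplectic orientation (Definition~\ref{def:symplectic_orientation}), and the bijection of Theorem~\ref{thm:bijection_orientations} that would hand you a Borel classes theory is itself proved by first defining $b_1(E)=z^A\thc(E)$ and then \emph{using Theorem~\ref{thm:proj_bundle_cohomology}} to construct the higher classes. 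Starting from Thom classes alone, the statement that $\{1,\xi\}$ freely generates $A^{*,*}(X\times\HProj^1)$ is precisely the first nontrivial case of the theorem you are proving and must be established, not invoked.

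The inductive step rests on a geometric claim that cannot be correct as stated. If $X_+\wedge(\HProj^{n}/\HProj^{n-1})$ were the Thom space of a rank-$2n$ symplectic bundle over $X\times\HProj^{n-1}$, the Thom isomorphism plus the inductive hypothesis would make its cohomology a free $A^{*,*}(X)$-module of rank $n$; but the theorem forces each filtration quotient to contribute exactly one generator ($\xi^n$), and summing ranks $1+1+2+\dots+(r-1)$ does not give $r$. You also never analyze $\HProj^{n}-\HProj^{n-1}$, which you would need in order to relate the quotient by the subvariety to the quotient by its complement. The induction in \cite{PW10a} (and, dually, in the proof of Theorem~\ref{thm:homological_projective_bundle}) uses a different subvariety: the codimension-$2$ locus $Y\subset\HProj^{r-1}$ where a generic section of $\Hstruct(1)$ meets the zero section transversally. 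One shows $\HProj^{r-1}-Y$ is $\A^1$-contractible and $Y\to\HProj^{r-2}$ is an $\A^2$-bundle pulling $\Hstruct(1)$ back to $\Hstruct(1)$; the localization sequence then has only one unknown term, the symplectic Thom isomorphism identifies it with a shift of $A^{*,*}(\HProj^{r-2})$, and the self-intersection formula $i_*i^A(-)=(-)\cup b_1(\Hstruct(1))$ (the cohomological counterpart of Lemma~\ref{lm:pullpush}) supplies exactly the multiplicative identification you correctly single out as the crux. So your instinct about where the difficulty lies is right, but the filtration you chose does not deliver the argument.
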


\begin{corollary}\label{cor:cohomological_fullflag}
Let $A$ be a symplectically oriented spectrum and let $E$ be a rank $2r$ symplectic bundle over a smooth variety $X$. Denote $\mathcal{U}_1,\mathcal{U}_2,\dots,\mathcal{U}_s$ the tautological rank $2$ symplectic bundles over $\HFlag(2^s,E)$ and put $\xi_i=b_1(\mathcal{U}_i)$. Then the homomorphism of left $A^{*,*}(X)$-modules 
\[
\bigoplus\limits_{\substack{0\le n_i\le (r-i) \\ i=1\dots s}} A^{*-4(n_1+\dots+n_s),*-2(n_1+\dots+n_s)}(X) \to A^{*,*}(\HFlag(2^s,E))
\]
given by 
\[
\sum\limits_{\substack{0\le n_i\le (r-i) \\ i=1\dots s}} a_{n_1n_2\dots n_s}\mapsto \sum\limits_{\substack{0\le n_i\le (r-i) \\ i=1\dots s}} a_{n_1n_2\dots n_s}\cup\xi_1^{n_1}\xi_2^{n_2}\dots \xi_s^{n_s}
\]
is an isomorphism.
\end{corollary}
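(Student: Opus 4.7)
The natural plan is induction on $s$, the length of the flag. The base case $s=1$ is Theorem~\ref{thm:proj_bundle_cohomology} itself, since $\HFlag(2^1,E)=\HProj(E)$, $\mathcal{U}_1=\Hstruct(1)$, and hence $\xi_1=b_1(\Hstruct(1))$.

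For the inductive step I would use the projection $q\colon \HFlag(2^s,E)\to \HProj(E)$ sending a flag $(V_2\le\cdots\le V_{2s})$ to its smallest piece $V_2$. Write $\mathcal{U}_1=\Hstruct(1)$ on $\HProj(E)$, and let $\mathcal{U}_1^{\perp}\subset q^*E$ be its symplectic orthogonal complement, a rank $2(r-1)$ symplectic bundle on $\HProj(E)$. Unpacking the quotient description of Definition~\ref{def:symplectic_grassmannians} yields a canonical isomorphism
\[
\HFlag(2^s,E)\;\cong\;\HFlag(2^{s-1},\,\mathcal{U}_1^{\perp})
\]
over $\HProj(E)$, which identifies the tautological rank $2$ bundles $\mathcal{U}_2,\dots,\mathcal{U}_s$ on the left with the corresponding tautological bundles of the shorter flag on the right. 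By naturality of Borel classes, the classes $\xi_2,\dots,\xi_s$ agree under this identification.

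Applying the inductive hypothesis for length $s-1$ to the rank $2(r-1)$ symplectic bundle $\mathcal{U}_1^{\perp}$ over $\HProj(E)$ gives an isomorphism of left $A^{*,*}(\HProj(E))$-modules with basis $\{\xi_2^{n_2}\cdots\xi_s^{n_s} : 0\le n_i\le r-i\}$; note that for a base bundle of rank $2(r-1)$ the index bound at position $i$ is $(r-1)-(i-1)=r-i$, which is exactly what the statement demands. Theorem~\ref{thm:proj_bundle_cohomology} applied to $E$ itself decomposes $A^{*,*}(\HProj(E))$ as a free $A^{*,*}(X)$-module on $1,\xi_1,\dots,\xi_1^{r-1}$. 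Composing the two decompositions yields the claimed basis $\{\xi_1^{n_1}\xi_2^{n_2}\cdots\xi_s^{n_s}\}$ with $0\le n_i\le r-i$ and the correct bidegree shifts, since each factor $\xi_i^{n_i}$ lies in $A^{4n_i,2n_i}(\HFlag(2^s,E))$.

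The only geometric point requiring care is the identification $\HFlag(2^s,E)\cong \HFlag(2^{s-1},\mathcal{U}_1^{\perp})$ together with the matching of tautological bundles; this is essentially the assertion that a flag of symplectic subspaces of $E$ extending $V_2$ is the same thing as a flag of symplectic subspaces of $V_2^{\perp}$. This is immediate from the homogeneous quotient description of the flag variety, and no serious obstacle arises. The whole corollary is, at heart, just an iterated application of Theorem~\ref{thm:proj_bundle_cohomology}.
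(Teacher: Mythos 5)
Your proof is correct and is essentially the paper's argument: the paper likewise deduces the corollary by presenting $\HFlag(2^s,E)$ as an iterated quaternionic projective bundle $\HFlag(2^s,E)\to\HFlag(2^{s-1},E)\to\dots\to\HProj(E)$ and applying Theorem~\ref{thm:proj_bundle_cohomology} at each stage. Your induction merely unwinds the same tower from the bottom (via the identification $\HFlag(2^s,E)\cong\HFlag(2^{s-1},\mathcal{U}_1^{\perp})$ over $\HProj(E)$) instead of peeling off the top stage, and your bookkeeping of the index bounds $0\le n_i\le r-i$ and bidegrees is accurate.
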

\begin{proof}
Follows from the theorem, since one can present $\HFlag(2^s,E)$ as an iterated quaternionic projective bundle
\[
\HFlag(2^s,E) \to \HFlag(2^{s-1},E)\to \dots \to \HFlag(2,E)=\HProj(E).
\]
\end{proof}

\begin{theorem}[{\cite[Theorem 10.2]{PW10a}}] \label{thm:splitting}
Let $A$ be a symplectically oriented spectrum and let $E$ be a rank $2r$ symplectic bundle over a smooth variety $X$. Then there exists a canonical morphism of smooth varieties $f\colon Y\to X$ such that
\begin{enumerate}
\item
$f^{A}\colon A^{*,*}(X)\to A^{*,*}(Y)$ is injective,
\item
$f^*E\cong E_1\oplus E_2\oplus \hdots \oplus E_r$ for some canonically defined rank $2$ symplectic bundles $E_i$. In particular, 
\[
b_i(E)=\sigma_i(b_1(E_1),b_1(E_2),\hdots, b_1(E_r))
\]
for the elementary symmetric polynomials $\sigma_i$.
\end{enumerate}
\end{theorem}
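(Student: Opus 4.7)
The plan is to take $Y=\HFlag(2^{r-1},E)$ with structure map $f\colon Y\to X$ and to show that $f$ already realizes the full symplectic splitting of $E$. Injectivity of $f^A$ is immediate from Corollary~\ref{cor:cohomological_fullflag} applied with $s=r-1$: the module $A^{*,*}(Y)$ decomposes as a direct sum of shifted copies of $A^{*,*}(X)$ indexed by multi-indices $(n_1,\ldots,n_{r-1})$, and the summand corresponding to $(0,\ldots,0)$ is identified with $A^{*,*}(X)$ via $f^A$, so $f^A$ is a split monomorphism, giving~(1).

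For the geometric content, denote by $\mathcal{U}_1\subset \mathcal{U}_2\subset\cdots\subset \mathcal{U}_{r-1}\subset f^*E$ the tautological filtration on $Y$, with $\mathcal{U}_i$ of rank~$2i$. By construction of $\HFlag$, each $\mathcal{U}_i$ is a nondegenerate symplectic subbundle of $f^*E$, and hence admits an orthogonal complement inside $f^*E$ (resp.\ inside $\mathcal{U}_{i+1}$). Define canonical rank~$2$ symplectic subbundles $E_i\subset f^*E$ by $E_1=\mathcal{U}_1$, $E_i=\mathcal{U}_i\cap \mathcal{U}_{i-1}^{\perp}$ for $2\le i\le r-1$, and $E_r=\mathcal{U}_{r-1}^{\perp}$. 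Pointwise linear algebra — which extends to the bundle setting precisely because each $\mathcal{U}_i$ is everywhere nondegenerate — gives the canonical symplectic decomposition $f^*E\cong E_1\oplus E_2\oplus\cdots\oplus E_r$, proving~(2).

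The Whitney-sum property (6) of Borel classes combined with property (5) (which kills $b_j(E_i)$ for $j>1$ since $E_i$ has rank~$2$) then yields
\[
f^A b_t(E)\;=\;\prod_{i=1}^{r} b_t(E_i)\;=\;\prod_{i=1}^{r}\bigl(1+b_1(E_i)\,t\bigr),
\]
and comparing coefficients of $t^i$ gives $f^A b_i(E)=\sigma_i(b_1(E_1),\ldots,b_1(E_r))$, which is the announced formula (valid after pullback to $Y$, but by injectivity of $f^A$ this pins down $b_i(E)$ uniquely in $A^{*,*}(X)$).

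No step of the argument presents a serious obstacle: the cohomological input is entirely absorbed into Corollary~\ref{cor:cohomological_fullflag} (itself an iteration of the symplectic projective-bundle theorem, Theorem~\ref{thm:proj_bundle_cohomology}), and the geometric input reduces to the standard orthogonal-complement construction for nondegenerate symplectic subbundles. The mildest point of care is running the flag construction with $s=r-1$ rather than $r$, since the final rank~$2$ piece $E_r$ is produced automatically as the orthogonal complement of $\mathcal{U}_{r-1}$.
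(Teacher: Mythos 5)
Your proposal is correct: the paper states this theorem without proof, citing \cite[Theorem~10.2]{PW10a}, and your argument — taking $Y=\HFlag(2^{r-1},E)$ (which for a rank $2r$ bundle is already the full quaternionic flag bundle), deducing injectivity of $f^A$ from Corollary~\ref{cor:cohomological_fullflag}, splitting $f^*E$ by successive orthogonal complements of the everywhere-nondegenerate tautological flag, and then applying the Whitney sum formula — is exactly the standard splitting-principle argument of the cited reference. The only caveat is notational: the paper's $\mathcal{U}_i$ denote the tautological rank $2$ pieces rather than the rank $2i$ members of the filtration, but your construction is the same.
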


\begin{definition}
Let $E$ be a rank $2r$ symplectic bundle over a smooth variety $X$. In the notation of Theorem~\ref{thm:splitting} we refer to $\{b_1(E_1),b_1(E_2),\dots,b_1(E_r)\}$ as \textit{Borel roots} of $E$ and denote $\xi_i=\xi_i(E)=b_1(E_i)$. Put $s_n(E)$ for the power sums of Borel roots of $E$,
\[
s_n(E) = \xi_1^n+\xi_2^n + \hdots + \xi_r^n \in A^{4n,2n}(X)
\]
and denote
\[
s_t(E)=s_1(E)t +s_2(E)t^2+\hdots \in A^{*,*}(X)[[t]].
\]
It follows from the standard relations between power sums and elementary symmetric polynomials that
\[
s_{t} (E)= -t \frac{d}{dt}\ln b_{-t}(E).
\]
\end{definition}

\begin{theorem}[{\cite[Theorem 11.2]{PW10a}}] \label{thm:cohomological_HGr_finite}
Let $A$ be a symplectically oriented spectrum. Then the homomorphism of $A^{*,*}(\pt)$-algebras
\[
\bigslant{A^{*,*}(\pt)[b_1,b_2,\dots,b_r]}{\left( h_{n-r+1},\dots, h_n \right)}\to A^{*,*}(\HGr(2r,2n))
\] 
induced by $b_i\mapsto b_i(\mathcal{U}^s_{2r,2n})$ is an isomorphism. Here $h_j=h_j(b_1,b_2,\dots,b_r)$ is the polynomial representing the $j$-th complete symmetric polynomial in $r$ variables via elementary symmetric polynomials.
\end{theorem}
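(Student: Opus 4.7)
The plan is to realize the claimed presentation inside the relative full flag variety via the splitting principle. First I would establish the relations. The symplectic orthogonal complement $(\mathcal{U}^s_{2r,2n})^{\perp}$ of the tautological sub-bundle provides a direct-sum decomposition of the trivial rank-$2n$ symplectic bundle, so multiplicativity of the total Borel class yields
\[
b_t(\mathcal{U}^s_{2r,2n}) \cdot b_t\bigl((\mathcal{U}^s_{2r,2n})^{\perp}\bigr) = 1
\]
in $A^{*,*}(\HGr(2r,2n))[[t]]$. Inverting the first factor gives $b_j((\mathcal{U}^s_{2r,2n})^{\perp}) = (-1)^j h_j(b_1(\mathcal{U}^s_{2r,2n}),\ldots,b_r(\mathcal{U}^s_{2r,2n}))$ by the standard power-series identity between elementary and complete symmetric polynomials. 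Since $(\mathcal{U}^s_{2r,2n})^{\perp}$ has rank $2(n-r)$, its Borel classes vanish in degrees above $n-r$, which gives the stated relations and produces a well-defined $A^{*,*}(\pt)$-algebra map $\phi$.

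Next I would pass to the relative full flag $\pi\colon Y = \HFlag(2^r, \mathcal{U}^s_{2r,2n}) \to X := \HGr(2r,2n)$. By Corollary~\ref{cor:cohomological_fullflag} applied with $E = \mathcal{U}^s_{2r,2n}$ and $s = r$, $A^{*,*}(Y)$ is a free $A^{*,*}(X)$-module of rank $r!$ on $\{\xi_1^{a_1}\cdots\xi_r^{a_r} : 0\le a_i \le r-i\}$, so $\pi^A$ is injective and by Theorem~\ref{thm:splitting} one has $\pi^A(b_i(\mathcal{U}^s_{2r,2n})) = e_i(\xi_1,\ldots,\xi_r)$. Identifying $Y$ canonically with the absolute flag variety $\HFlag(2^r, 2n)$ and applying Corollary~\ref{cor:cohomological_fullflag} to the trivial rank-$2n$ symplectic bundle over $\pt$ presents $A^{*,*}(Y)$ alternatively as a free $A^{*,*}(\pt)$-module of rank $n!/(n-r)!$ on $\{\xi_1^{a_1}\cdots\xi_r^{a_r}: 0\le a_i \le n-i\}$.

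I would then upgrade this second description to an explicit ring presentation. Over $Y$ the top subbundle $V_{2r}$ of the tautological flag sits inside the trivial rank-$2n$ symplectic bundle with symplectic orthogonal complement of rank $2(n-r)$, so the argument of the first paragraph applied inside $A^{*,*}(Y)$ gives $h_j(\xi_1,\ldots,\xi_r)=0$ for $j=n-r+1,\ldots, n$. A direct commutative-algebra computation via the standard-monomial (Schubert polynomial) basis shows that
\[
A^{*,*}(\pt)[\xi_1,\ldots,\xi_r]/\bigl(h_{n-r+1}(\xi),\ldots,h_n(\xi)\bigr)
\]
is a free $A^{*,*}(\pt)$-module of rank exactly $n!/(n-r)!$, matching the rank of $A^{*,*}(Y)$, so the resulting surjection is an isomorphism.

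Finally, under this ring identification the symmetric subring of $A^{*,*}(Y)$ is $A^{*,*}(\pt)[e_1(\xi),\ldots,e_r(\xi)]/(h_{n-r+1}(e),\ldots,h_n(e))$, which is canonically identified with the source $R := A^{*,*}(\pt)[b_1,\ldots,b_r]/(h_{n-r+1},\ldots,h_n)$ of $\phi$ via $b_i \leftrightarrow e_i(\xi)$, and this matches the image of $\pi^A \circ \phi$. Classical symmetric-function theory shows that $A^{*,*}(Y)$ is free of rank $r!$ over this symmetric subring on the same standard-monomial basis $\{\xi_1^{a_1}\cdots\xi_r^{a_r} : 0\le a_i \le r-i\}$ as in the second paragraph. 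Comparing the two presentations of $A^{*,*}(Y)$ as a free module on these monomials, uniqueness of basis expansions forces the coefficient subrings to coincide, i.e. $\pi^A(A^{*,*}(X)) = \pi^A(\phi(R))$, whence $\phi$ is an isomorphism by injectivity of $\pi^A$. The main obstacle is the third paragraph: the module-level consequence of Corollary~\ref{cor:cohomological_fullflag} is immediate, but lifting it to the full ring presentation via the algebraic rank-count for the quotient by the $h_j$'s is the combinatorial core of the theorem.
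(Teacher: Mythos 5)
The paper offers no proof of this statement --- it is imported wholesale from \cite[Theorem~11.2]{PW10a} --- so there is no in-paper argument to measure you against; judged on its own, your proof is correct and is assembled entirely from ingredients the paper does state. The relations follow from $\mathcal{U}^s_{2r,2n}\oplus(\mathcal{U}^s_{2r,2n})^{\perp}\cong \Hplane_-^{\oplus n}$, the Whitney formula, and the vanishing of Borel classes above the rank, exactly as you say. The key device --- identifying $\HFlag(2^r,\mathcal{U}^s_{2r,2n})$ with the absolute flag variety $\HFlag(2^r,2n)$ so that Corollary~\ref{cor:cohomological_fullflag} gives two free-module presentations of the same ring $A^{*,*}(Y)$, one over $\pi^A(A^{*,*}(X))$ on the staircase monomials $0\le a_i\le r-i$ and one over $A^{*,*}(\pt)$ on the monomials $0\le a_i\le n-i$ --- is sound, and the two facts you defer to classical symmetric-function theory really are classical and independent of $A$: freeness of $\Z[\xi_1,\dots,\xi_r]$ over $\Z[\xi]^{S_r}$ on the monomials $\xi^a$ with $0\le a_i\le r-i$, and freeness of $\Z[\xi_1,\dots,\xi_r]/(h_{n-r+1}(\xi),\dots,h_n(\xi))$ of rank $n!/(n-r)!$ on the larger staircase (the presentation of the cohomology of the complex flag manifold $Fl(1,\dots,r;n)$). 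Two small points deserve a sentence in a written-up version: injectivity of $R\to A^{*,*}(Y)$, $b_i\mapsto e_i(\xi)$, uses that the ideal $(h_{>n-r}(\xi))$ is extended from $\Z[e_1,\dots,e_r]$ and that $1$ is part of the basis of $\Z[\xi]$ over $\Z[e]$, so that the ideal contracts correctly; and the final ``uniqueness of basis expansions'' step works only because you already know $\phi(R)\subseteq A^{*,*}(X)$, so that an expansion with coefficients in $\pi^A(\phi(R))$ is in particular an expansion with coefficients in $\pi^A(A^{*,*}(X))$ and can be compared term by term. Neither is a gap, just a place to be explicit.
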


\begin{definition}
We have the following ind-objects considered as pointed motivic spaces.
\begin{itemize}
\item
$\HGr(2r)=\varinjlim\limits_n (\HGr(2r,2n),*)$,
\item
$\HGr=\varinjlim\limits_{r,n} (\HGr(2r,2n), *)$,
\end{itemize}
where $*= \HGr(2,2)\in \HGr(2r,2n)$.
\end{definition}

\begin{definition}
We have the following classes over the infinite Grassmannians.
\begin{itemize}
\item
$b_i(\mathcal{U}^s_{2r})\in A^{4i,2i}(\HGr(2r))$ satisfying $b_i(\mathcal{U}^s_{2r}) |_{\HGr(2r,2n)}= b_i(\mathcal{U}^s_{2r,2n})$,
\item
$b_i(\tau^s)\in A^{4i,2i}(\HGr)$ satisfying $b_i(\tau^s)|_{\HGr(2r,2n)}= b_i(\mathcal{U}^s_{2r,2n})$.
\end{itemize}
The next theorem yields that these elements are uniquely defined by the given restrictions.
\end{definition}

\begin{definition}
Let $R$ be a graded ring and let $b_i$ be variables of degree $d_i\in \mathbb{N}$. We denote $R[[b_1,b_2,\dots]]_h$ the \textit{ring of homogeneous power series}.
\end{definition}

\begin{theorem}[{\cite[Theorem 9.1]{PW10b}}] \label{thm:cohomological_HGr}
Let $A$ be a symplectically oriented spectrum. Then the following homomorphisms of $A^{*,*}(\pt)$-algebras are isomorphisms.
\begin{enumerate}
\item
$A^{*,*}(\pt)[[b_1,b_2,\dots,b_r]]_h\xrightarrow{} A^{*,*}(\HGr(2r)_+)$, induced by $b_i\mapsto b_i(\mathcal{U}^s_{2r})$. 
\item
$A^{*,*}(\pt)[[b_1,b_2,\dots]]_h \xrightarrow{} A^{*,*}(\HGr_+)$, induced by $b_i\mapsto b_i(\tau^s)$.
\end{enumerate}
\end{theorem}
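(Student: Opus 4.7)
The plan is to derive both statements from the finite computation in Theorem~\ref{thm:cohomological_HGr_finite} by passage to the limit via a Milnor-type short exact sequence, and then to kill the $\varprojlim^{1}$ term by simple bidegree considerations.

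Starting with (1), I would write $\HGr(2r)_+=\varinjlim_{n} \HGr(2r,2n)_+$ as a filtered colimit of pointed motivic spaces. A mapping telescope argument entirely analogous to Lemma~\ref{lm:truncated_limit}(2) produces the Milnor exact sequence
\[
0\to \varprojlim\nolimits^{1}_{n} A^{*-1,*}(\HGr(2r,2n)) \to A^{*,*}(\HGr(2r)_+)\to \varprojlim_{n} A^{*,*}(\HGr(2r,2n))\to 0.
\]
Since $\mathcal{U}^s_{2r,2(n+1)}$ restricts to $\mathcal{U}^s_{2r,2n}$ along the standard inclusion, the transition maps send $b_i\mapsto b_i$ under the presentation of Theorem~\ref{thm:cohomological_HGr_finite}.

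Next I would exploit the fact that each generator $b_i$ has strictly positive bidegree $(4i,2i)$ and each relation $h_j$ has bidegree $(4j,2j)$. In any fixed bidegree $(p,q)$, once $n$ is large enough that $4(n-r+1)>p$, all of $h_{n-r+1},\ldots,h_n$ lie in bidegrees strictly beyond $(p,q)$ and impose no identification there. Hence the inverse system stabilizes in each bidegree, which simultaneously forces $\varprojlim^{1}_{n}=0$ and identifies $\varprojlim_n A^{p,q}(\HGr(2r,2n))$ with the bidegree-$(p,q)$ component of $A^{*,*}(\pt)[[b_1,\ldots,b_r]]_h$. Summing over bidegrees proves (1); the fact that the resulting isomorphism is one of $A^{*,*}(\pt)$-algebras is automatic because $b_i\mapsto b_i(\mathcal{U}^s_{2r})$ is a ring map compatible with restrictions.

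For (2) the same Milnor exact sequence is applied to $\HGr_+=\varinjlim_{r} \HGr(2r)_+$. The stabilization $\HGr(2r)\hookrightarrow \HGr(2(r+1))$ restricts $\mathcal{U}^s_{2(r+1)}$ to $\mathcal{U}^s_{2r}\oplus\Hplane_{-}$, so by the Whitney sum formula (axiom (6) of a Borel classes theory) together with the normalization $b_i(\Hplane_{-})=0$ for $i\ge 1$, the induced map on cohomology is $b_i\mapsto b_i$ for $i\le r$ and $b_{r+1}\mapsto 0$. Under the identification from (1) this is the obvious surjection
\[
A^{*,*}(\pt)[[b_1,\ldots,b_{r+1}]]_h \twoheadrightarrow A^{*,*}(\pt)[[b_1,\ldots,b_{r}]]_h
\]
killing $b_{r+1}$, which is surjective in every bidegree. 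Mittag-Leffler therefore gives $\varprojlim^{1}_{r}=0$ and the limit is $A^{*,*}(\pt)[[b_1,b_2,\ldots]]_h$ as required.

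The only step requiring genuine care is the Milnor exact sequence for a filtered colimit of pointed motivic spaces, which is set up by the standard mapping telescope argument (compare \cite[Lemma~A.34]{PPR09}); everything else is formal bookkeeping given Theorem~\ref{thm:cohomological_HGr_finite} and the axioms of a Borel classes theory.
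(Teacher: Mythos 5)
The paper offers no proof of this statement (it is imported verbatim from Panin--Walter), so the only question is whether your argument is sound. Your framework — Milnor exact sequence over the finite computation of Theorem~\ref{thm:cohomological_HGr_finite}, then vanishing of $\varprojlim^1$ — is the right one, and your part (2) is correct once part (1) is in place. But the central step of part (1) fails. You claim that once $4(n-r+1)>p$ the relations $h_{n-r+1},\dots,h_n$ impose no identification in bidegree $(p,q)$, so the tower stabilizes. This ignores that $(h_{n-r+1},\dots,h_n)$ is an ideal over $A^{*,*}(\pt)$, which for the theories this theorem is applied to is nonzero in arbitrarily negative bidegrees (e.g.\ $\beta\in\KO^{-8,-4}(\pt)$): an element $a\cdot h_j$ with $a\in A^{p-4j,q-2j}(\pt)$ lies in bidegree $(p,q)$ for every $j$. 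Concretely, for $r=1$ and $A=\KW$ one has $\KW^{0,0}(\HProj^n)=\bigoplus_{0\le 2j\le n}\W(k)\,\beta^{j}\xi^{2j}$, which grows without bound as $n\to\infty$; the relation $\xi^{n+1}$ kills the bidegree-$(0,0)$ class $\beta^{(n+1)/2}\xi^{n+1}$ at every odd stage, so there is no stabilization. The claim is also internally inconsistent: a stabilizing tower would compute a finitely generated free module (a quotient of the \emph{polynomial} ring) in each bidegree, whereas the bidegree-$(p,q)$ piece of $A^{*,*}(\pt)[[b_1,\dots,b_r]]_h$ is the infinite product $\prod_{\alpha} A^{p-4|\alpha|',q-2|\alpha|'}(\pt)$ over all monomials $b^\alpha$, with $|\alpha|'=\sum i\alpha_i$.

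The repair does not need stabilization. The transition maps are surjective because both terms are quotients of the same ring $A^{*,*}(\pt)[b_1,\dots,b_r]$ by nested ideals (the identity $h_{n+1}=\sum_{i=1}^{r}(-1)^{i+1}b_i h_{n+1-i}$ gives $(h_{n-r+2},\dots,h_{n+1})\subseteq(h_{n-r+1},\dots,h_n)$), so Mittag--Leffler already kills $\varprojlim^1$. To identify the limit one must use that $A^{*,*}(\HGr(2r,2n))$ is a free $A^{*,*}(\pt)$-module on a system of bases compatible with restriction (Schur-type classes, or the monomials $b^\alpha$ with $\alpha_1+\dots+\alpha_r\le n-r$ after a change of basis), so that in each bidegree the tower becomes a tower of projections of finite products onto sub-products; its limit is the full product over all monomials, i.e.\ exactly the homogeneous power series ring. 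With (1) so established, your treatment of (2) — Whitney formula plus $b_i(\Hplane_-)=0$ giving the surjection killing $b_{r+1}$, then Mittag--Leffler over $r$ — goes through as written.
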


\section{Symplectically oriented homology theories}

The results of this section are dual to the results of the previous one: we compute symplectically oriented homology of quaternionic Grassmannians and flag varieties. Throughout this section $A$ denotes a symplectically oriented commutative ring spectrum in the sense of Definition~\ref{def:symplectic_orientation}.

\begin{lemma}
Let $E$ be a rank $2r$ symplectic bundle over a smooth variety $X$. Then 
\[
\thc(E)\cap - \colon A_{*,*}(E/(E-X)) \to A_{*-4r,*-2r}(X)
\]
is an isomorphism.
\end{lemma}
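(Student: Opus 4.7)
The plan is to construct a morphism $\Psi_E$ in $\SH(k)$ that serves as a ``Thom equivalence'', show that smashing with $A$ turns it into an isomorphism of $A$-module spectra, and identify the cap product with the induced map on homology up to permutations and suspension isomorphisms.

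First, I would form the Thom diagonal $\Delta_E\colon E/(E-X)\to E/(E-X)\wedge X_+$ arising from the diagonal $E\hookrightarrow E\times_k X$ (the complement $E-X$ maps into the complement of $X\times_k X$, so the diagonal descends to Thom spaces). Combining with $\thc(E)\colon E/(E-X)\to A\wedge \Sph^{4r,2r}$, set
\[
\Psi_E=(\thc(E)\wedge \id_{X_+})\circ \Delta_E\colon E/(E-X)\to A\wedge \Sph^{4r,2r}\wedge X_+,
\]
and consider the $A$-linear extension
\[
\Phi_E=(m_A\wedge \id)\circ (\id_A\wedge\Psi_E)\colon A\wedge E/(E-X)\to A\wedge \Sph^{4r,2r}\wedge X_+.
\]
Unwinding Definition~\ref{def:cap}(2), the cap product $\thc(E)\cap -$ is, up to the suspension isomorphism $\Sigma^{-4r,-2r}$ and the permutation $\tau_{\Sph^{4r,2r},X_+}$, precisely the homomorphism induced by $\Phi_E$ on $\pi_{*,*}=[\SSph\wedge\Sph^{*,*},-]$. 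Hence it suffices to prove that $\Phi_E$ is an isomorphism in $\SH(k)$.

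To show that $\Phi_E$ is an $\SH(k)$-isomorphism, I would upgrade property (3) of Definition~\ref{def:symplectic_orientation} to arbitrary smooth test varieties. For any $Y\in \mathrm{Sm}/k$ and the projection $p_1\colon X\times Y\to X$, the pullback $p_1^*E$ is a rank $2r$ symplectic bundle whose Thom space is $p_1^*E/(p_1^*E-X\times Y)\simeq (E/(E-X))\wedge Y_+$. Applying property (3) to $p_1^*E$ and using the functoriality clause of Definition~\ref{def:symplectic_orientation} then shows that cup product with $\thc(E)$ induces an isomorphism
\[
A^{*,*}(X\times Y)\xrightarrow{\ \sim\ } A^{*+4r,*+2r}((E/(E-X))\wedge Y_+).
\]
Via the Yoneda identification $A^{i,j}(B)=[B,A\wedge\Sph^{i,j}]$, this says exactly that $\Psi_E$ induces a bijection $[Y_+\wedge\Sph^{i,j},A\wedge\Sph^{4r,2r}\wedge X_+]\xrightarrow{\sim}[Y_+\wedge\Sph^{i,j},A\wedge E/(E-X)]$ for every $Y\in \mathrm{Sm}/k$ and every $(i,j)\in\Z^2$. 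Since suspension spectra of smooth varieties generate $\SH(k)$ as a localizing triangulated category, the Yoneda lemma forces $\Phi_E$ to be an isomorphism in $\SH(k)$. Passing to $\pi_{*,*}$ and unwinding the identification of the first paragraph gives the desired result.

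The main obstacle is the bookkeeping needed in the first step to identify $\thc(E)\cap -$ with the map induced by $\Phi_E$, tracking the Koszul signs, the permutation $\tau_{\Sph^{4r,2r},X_+}$, and the suspension isomorphism $\Sigma^{-4r,-2r}$; all of these are invertible, so the identification is harmless for the purpose of proving the isomorphism. The generation argument for $\SH(k)$ is completely standard (see e.g.\ the cellular arguments of \cite{MV99}), and the extension of property (3) to arbitrary smooth $Y$ is immediate from the functoriality of Thom classes.
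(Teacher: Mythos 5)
The first half of your argument (rewriting $\thc(E)\cap-$ as the map induced on $\pi_{*,*}$ by the $A$-linear extension $\Phi_E$ of the Thom diagonal) is sound, but the second half has a genuine gap. Property (3) of Definition~\ref{def:symplectic_orientation}, even after base change to $X\times Y$, is a statement about the functor $[-,A\wedge\Sph^{i,j}]$: it compares maps \emph{out of} $(E/(E-X))\wedge Y_+$ and $X_+\wedge Y_+$ into $A$-module spectra. It does not ``say exactly'' that $\Phi_E$ induces a bijection on $[Y_+\wedge\Sph^{i,j},-]$; that would be a statement about maps \emph{into} $A\wedge E/(E-X)$ from compact generators, i.e.\ precisely the $A$-homology Thom isomorphism you are trying to prove. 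Cohomology and homology are different functors here, and no Yoneda or generation argument converts a natural bijection on $[-,A\wedge\Sph^{i,j}]$ into one on $[Y_+\wedge\Sph^{i,j},A\wedge-]$: a map of spectra inducing isomorphisms on $A$-cohomology of all smooth test objects need not be an equivalence. (One could try to pass between the two via Spanier--Whitehead/Atiyah duality for smooth $X$, but then $X_+$ is replaced by its dual, which is not $X_+$, and one imports far more than the orientation axioms.)

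The paper's proof sidesteps this entirely: a Mayer--Vietoris induction on a trivializing cover of $X$ reduces to the case $E=p^*\Hplane_-^{\oplus r}$, where Lemma~\ref{lm:thom_trivial} identifies $\thc(E)$ with $a\,\Sigma_\T^{2r}1_X$ for a unit $a\in A^{0,0}(\pt)$, so that $\thc(E)\cap-$ becomes an invertible multiple of the suspension isomorphism $\Sigma_\T^{-2r}$. Your argument can be repaired along the same lines: both source and target of $\thc(E)\cap-$ satisfy Mayer--Vietoris in $X$ via the homology long exact sequences, cap product with the Thom class is compatible with restriction to open subsets by Lemma~\ref{lm:cap_naturality} and the functoriality of Thom classes, and the five lemma then reduces the claim to the local (trivial bundle) case.
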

\begin{proof}
Using a standard Mayer-Vietoris argument we may assume that $E$ is a trivial bundle, i.e. $E=p^*\Hplane_{-}^{\oplus r}$ for the projection $p\colon X\to \pt$. By Lemma~\ref{lm:thom_trivial} $\thc(E)=a\Sigma^{2r}_\T 1_X$, thus $\thc(E)\cap -$ coincides up to an invertible scalar with the suspension isomorphism $\Sigma_\T^{-2r}$.
\end{proof}

\begin{definition}
Let $i\colon Y\to X$ be a codimension $2r$ closed embedding of smooth varieties. Suppose that the normal bundle $N_i$ is equipped with a symplectic form. The \textit{transfer map in homology} $i^{!A}$ is given by composition
\[
\resizebox{\textwidth}{!}{$
i^{!A}\colon A_{*,*}(X)\xrightarrow{p_A} A_{*,*}(X/(X-Y)) \xrightarrow{d_A} A_{*,*}(N_i/(N_i-Y)) \xrightarrow{\thc(N_i)\cap -} A_{*-4r,*-2r}(Y)
$}
\]
Here 
\begin{itemize}
\item
$X\xrightarrow{p} X/(X-Y)$ is the canonical quotient morphism,
\item
$d\colon X/(X-Y)\xrightarrow{\simeq} N_i/(N_i-Y)$ is the deformation to the normal bundle isomorphism (\cite[Theorem 3.2.23]{MV99}). 
\end{itemize}
With this notation the localization sequence in homology could be rewritten in the following way:
\[
\hdots\xrightarrow{\partial} A_{*,*}(X-Y) \xrightarrow{j_A} A_{*,*}(X) \xrightarrow{i^{!A}} A_{*-4r,*-2r}(Y) \xrightarrow{\partial}\hdots
\]
\end{definition}

\begin{lemma}\label{lm:transfer_homomorphism}
Let $i\colon Y\to X$ be a codimension $2r$ closed embedding of smooth varieties. Suppose that the normal bundle $N_i$ is equipped with a symplectic form. Then the transfer map $i^{!A}$ is a homomorphism of $A^{*,*}(X)$-modules, i.e.
\[
i^{!A}(a\cap x)=i^A(a)\cap i^{!A}(x)
\]
for every $x\in A_{*,*}(X)$ and $a\in A^{*,*}(X)$.
\end{lemma}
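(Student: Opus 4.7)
The plan is to decompose $i^{!A}=(\thc(N_i)\cap -)\circ d_A\circ p_A$ and establish a projection-type identity at each of the three stages. For the quotient map $p\colon X_+\to X/(X-Y)$, I would apply Lemma~\ref{lm:cap_naturality} to the commutative square
\[
\xymatrix{
X_+\wedge X_+ \ar[r]^-{\id\wedge p} & X_+\wedge X/(X-Y) \\
X_+ \ar[u]^-{\Delta} \ar[r]^-{p} & X/(X-Y) \ar[u]_-{f'}
}
\]
(with $f'$ the obvious relative diagonal), yielding $p_A(a\cap x)=a\cap_{f'} p_A(x)$ for $a\in A^{*,*}(X)$ and $x\in A_{*,*}(X)$, i.e.\ naturality of the cap product under $p$.

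The second stage, naturality of $d_A$, is the main subtlety. The deformation $d$ is realized in $\Ho(k)$ by a zig-zag $X/(X-Y)\xrightarrow{j_1} D/(D-Y\times\A^1)\xleftarrow{j_0} N_i/(N_i-Y)$ through the deformation-to-the-normal-bundle space $D$, with both $j_1$ and $j_0$ weak equivalences, so $d_A=(j_0)_A^{-1}\circ(j_1)_A$. Moreover, $D$ admits a canonical projection $q\colon D\to X$ satisfying $q\circ j_1=\id_X$ and $q\circ j_0=i\circ\pi$, where $\pi\colon N_i\to Y$ is the bundle projection. Setting $\tilde{a}=q^A(a)\in A^{*,*}(D)$, two further applications of Lemma~\ref{lm:cap_naturality} to the analogous squares for $j_1$ and $j_0$ give
\[
(j_1)_A(a\cap x)=\tilde{a}\cap_{f'_D}(j_1)_A(x),\quad (j_0)_A(\pi^A i^A(a)\cap y)=\tilde{a}\cap_{f'_D}(j_0)_A(y);
\]
substituting $y=d_A(x)$ in the second and using $(j_0)_A\circ d_A=(j_1)_A$ produces $d_A(a\cap x)=\pi^A i^A(a)\cap d_A(x)$.

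For the final cap with the Thom class, set $b=i^A(a)$ and let $z\in A_{*,*}(N_i/(N_i-Y))$. Working with the $A_{*,*}(N_i)$-valued version of the Thom cap (then composing with $\pi_A$ to reach $A_{*,*}(Y)$), associativity of the mixed cap products together with graded commutativity via Lemma~\ref{lm:epsilon_commutativity} (with trivial sign, since $\deg\thc(N_i)=(4r,2r)$ is even in both indices) would give
\[
\thc(N_i)\cap(\pi^A b\cap z)=(\pi^A b\cup\thc(N_i))\cap z=\pi^A b\cap(\thc(N_i)\cap z)
\]
in $A_{*,*}(N_i)$; a further application of Lemma~\ref{lm:cap_naturality} to $\pi$ then turns $\pi_A(\pi^A b\cap(-))$ into $b\cap\pi_A(-)$, yielding $\thc(N_i)\cap(\pi^A b\cap z)=b\cap(\thc(N_i)\cap z)$ in $A_{*,*}(Y)$. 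Composing the three identities produces the required formula $i^{!A}(a\cap x)=i^A(a)\cap i^{!A}(x)$. The hard part will be formalizing the zig-zag in the second stage; once the correct lift $\tilde a$ on $D$ is identified, even that reduces to two applications of the naturality lemma, while the first and third stages are direct.
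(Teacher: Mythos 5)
Your proposal is correct and follows exactly the paper's argument, which simply observes that each of the three factors of $i^{!A}$ — namely $p_A$, $d_A$, and $\thc(N_i)\cap-$ — is a homomorphism of $A^{*,*}(X)$-modules, the first two by Lemma~\ref{lm:cap_naturality} and the last by Lemma~\ref{lm:epsilon_commutativity}. You have merely spelled out the details (the relative diagonals, the deformation-space zig-zag, the sign check for the Thom class) that the paper leaves implicit.
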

\begin{proof}
The morphisms $p_A$ and $d_A$ are homomorphisms of $A^{*,*}(X)$-modules by Lemma~\ref{lm:cap_naturality}, while cap product with the Thom class induces a homomorphism of $A^{*,*}(X)$-modules by Lemma~\ref{lm:epsilon_commutativity}.
\end{proof}

\begin{lemma}[{cf. \cite[Proposition 7.6]{PW10a}}]\label{lm:pullpush}
Let $E$ be a rank $2r$ symplectic bundle over a smooth variety $X$ and let $s\colon X\to E$ be a section meeting the zero section $z\colon X\to E$ transversally in $Y$. Let $i\colon Y\to X$ be the closed embedding. Equip the normal bundle $N_i$ with a symplectic form using the canonical isomorphism $i^*E\cong N_i$. Then for every $x\in A_{*,*}(X)$ we have
\[
i_{A}i^{!A}(x)= b_r(E)\cap x.
\]
\end{lemma}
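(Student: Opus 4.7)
The plan is to reduce both $b_r(E)\cap x$ and $i_A i^{!A}(x)$ to a common expression, namely a cap product $\thc(N_i)\cap_{\delta''}x$ with an auxiliary ``relative diagonal'' $\delta''\colon X_+\to N_i/(N_i-Y)\wedge X_+$. The only inputs needed beyond the definition of $i^{!A}$ and the naturality of cap product (Lemma~\ref{lm:cap_naturality}) are the construction of Borel classes as $b_r(E)=\bar z^A\thc(E)$ (cf.\ the proof sketch of Theorem~\ref{thm:bijection_orientations}) and two geometric identifications in $\Ho(k)$ arising from the deformation to the normal bundle.

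First I would establish the cohomological identity
\[
b_r(E) = (d\circ p)^A \thc(N_i) \in A^{4r,2r}(X_+),
\]
where $p\colon X_+\to X/(X-Y)$ is the canonical quotient and $d\colon X/(X-Y)\xrightarrow{\simeq} N_i/(N_i-Y)$ is the deformation-to-the-normal-bundle isomorphism. To do so I would use two facts. The $\A^1$-homotopy $H\colon X\times\A^1\to E$, $(x,t)\mapsto t\cdot s(x)$, composed with the quotient $E\to E/(E-X)$, yields $\bar z\simeq \bar s\circ p$ in $\Ho(k)$, where $\bar s\colon X/(X-Y)\to E/(E-X)$ is induced by the section (well-defined because $s^{-1}(X)=Y$) and $\bar z\colon X_+\to E/(E-X)$ by the zero section. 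The standard deformation-to-the-normal-bundle construction, combined with the transversality identification $N_i\cong i^*E$, yields a factorization $\bar s\simeq \bar\iota\circ d$ in $\Ho(k)$, where $\bar\iota\colon N_i/(N_i-Y)\to E/(E-X)$ is induced by the symplectic bundle map $N_i\cong i^*E\to E$. Thom class functoriality (Definition~\ref{def:symplectic_orientation}(1,2)) then gives $\bar\iota^A\thc(E)=\thc(N_i)$, and chaining the identifications produces $b_r(E)= p^A \bar s^A \thc(E) = p^A d^A \bar\iota^A \thc(E) = (dp)^A\thc(N_i)$.

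With this in hand I would put $\delta''=(dp,\id_{X_+})\colon X_+\to N_i/(N_i-Y)\wedge X_+$ and compute both sides in parallel. An application of Lemma~\ref{lm:cap_naturality} to the commutative square with top arrow $dp\wedge\id_{X_+}$, left column $\Delta_X$, right column $\delta''$ and bottom arrow $\id_{X_+}$ converts $b_r(E)\cap x = (dp)^A\thc(N_i)\cap_{\Delta_X}x$ into $\thc(N_i)\cap_{\delta''}x$. On the other side, by definition $i^{!A}(x) = \thc(N_i)\cap_{\delta_N}(d_A p_A(x))$ for the relative diagonal $\delta_N\colon N_i/(N_i-Y)\to N_i/(N_i-Y)\wedge (N_i)_+$; pushing forward via $q=i\circ\pi_N\colon N_i\to X$ and applying Lemma~\ref{lm:cap_naturality} gives $i_A i^{!A}(x) = \thc(N_i)\cap_{\delta_N'}(d_A p_A(x))$, where $\delta_N'\colon v\mapsto(v,q(v))$. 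A direct check of basepoint behaviour shows $\delta_N'\circ(dp) = \delta''$, so a second application of Lemma~\ref{lm:cap_naturality} yields $\thc(N_i)\cap_{\delta_N'}(d_A p_A(x)) = \thc(N_i)\cap_{\delta''}x$. Combining the two computations gives $i_A i^{!A}(x) = b_r(E)\cap x$.

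The main obstacle is the pair of $\Ho(k)$-level identifications in the first step. The homotopy $\bar z\simeq \bar s\circ p$ is elementary, but the factorization $\bar s\simeq \bar\iota\circ d$ requires unpacking the deformation to the normal bundle via the blow-up $\mathrm{Bl}_{Y\times\{0\}}(X\times\A^1)$ and comparing the section $s$ with its ``linear part'' along $Y$, which is an isomorphism onto $E|_Y$ precisely by the transversality hypothesis. Once these geometric facts are in place, the remainder is a bookkeeping exercise with two naturality squares for the cap product.
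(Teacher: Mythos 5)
Your proposal is correct and is essentially the paper's own argument: both rest on the same three inputs (the identification of the section $s$ with the zero section $z$ up to $\A^1$-homotopy, the factorization of the induced map $X/(X-Y)\to E/(E-z(X))$ through the deformation to the normal bundle, and functoriality of Thom classes together with naturality of the cap product), the only cosmetic difference being that you reduce both sides to a cap product with $\thc(N_i)$ over $N_i/(N_i-Y)$ while the paper reduces to a cap product with $\thc(E)$ over $E/(E-z(X))$ and invokes $b_r(E)=z^A\thc(E)$ at the end.
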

\begin{proof}
Consider the following diagram.
\[
\xymatrix{
 & & A_{*,*}(N_i/(N_i-Y)) \ar[d]^{\thc(N_i)\cap -} \ar[ddl]^{j_A} \\
A_{*,*}(X) \ar[r]^(0.37){p_A} \ar@<-0.5pc>[d]_{s_A} \ar[d]^{z_A} & A_{*,*}(X/(X-Y)) \ar[d]^{s_A} \ar[ur]^{d_A} &  A_{*-4r,*-2r}(Y)  \ar[d]^{i_A} \\
A_{*,*}(E) \ar@/^-1.5pc/[u]_{\pi_A}  \ar[r]^(0.37){q_A} & A_{*,*}(E/(E-z(X))) \ar[r]^(0.55){\thc(E)\cap -} & A_{*-4r,*-2r}(X) 
}
\]
Here 
\begin{itemize}
\item
the morphisms $p_A$ and $q_A$ are induced by the quotient maps, 
\item
$d_A$ is induced by the deformation to the normal bundle isomorphism,
\item
$\pi_A$ is induced by the canonical projection $\pi \colon E\to X$,
\item
$j_A$ is induced by the isomorphism $i^*E\cong N_i$.
\end{itemize}
In the left side of the diagram $s_A$ and $z_A$ are homomorphisms inverse to an isomorphism $\pi_A$, thus $s_A=z_A$ and the left square commutes. The middle triangle commutes by the functoriality of the deformation to the normal bundle isomorphism. The right side commutes by the functoriality of Thom classes. Hence
\[
i_{A}i^{!A}(x)=i_A(\thc(N_i)\cap d_Ap_A(x))=\thc(E)\cap (q_Az_A(x)).
\]
By Lemma~\ref{lm:cap_naturality} we have
\[
\thc(E)\cap (q_Az_A(x))= z^Aq^A(\thc(E))\cap x =b_r(E)\cap x.
\]
\end{proof}

\begin{theorem}\label{thm:homological_projective_bundle}
Let $E$ be a symplectic bundle of rank $2r+2$ over a smooth variety $X$. Denote $p \colon \HProj(E)\to X$ the canonical projection and set $\xi=b_1(\Hstruct(1))$. Then the homomorphism of left $A^{*,*}(X)$-modules
\[
A_{*,*}(\HProj(E))\to \bigoplus_{n=0}^r A_{*-4n,*-2n}(X)
\]
given by $x \mapsto p_{A}(x)+p_{A}(\xi\cap x)+\dots +p_{A}(\xi^{r}\cap x)$ is an isomorphism.
\end{theorem}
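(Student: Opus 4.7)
My plan is to reformulate the statement as a duality assertion derived from the cohomological projective bundle formula, reduce to the case of a trivial bundle via Mayer--Vietoris, and then perform an inductive cellular computation.

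\emph{Step 1: Duality reformulation.} The cap product pairing
\[
A^{*,*}(\HProj(E))\times A_{*,*}(\HProj(E))\to A_{*,*}(X),\quad (a,x)\mapsto p_A(a\cap x),
\]
is $A^{*,*}(X)$-bilinear by Lemma~\ref{lm:cap_naturality}, so its adjoint (Definition~\ref{def:duality}) is an $A^{*,*}(X)$-linear map
\[
D_p\colon A_{*,*}(\HProj(E))\to \Hom_{A^{*,*}(X)}(A^{*,*}(\HProj(E)), A_{*,*}(X)).
\]
By Theorem~\ref{thm:proj_bundle_cohomology}, $A^{*,*}(\HProj(E))$ is a free left $A^{*,*}(X)$-module on $\{1,\xi,\dots,\xi^r\}$ with $\deg \xi^n=(4n,2n)$, so its $A^{*,*}(X)$-linear dual is canonically $\bigoplus_{n=0}^r A_{*-4n,*-2n}(X)$: a tuple $(y_n)$ corresponds to the map $\xi^n\mapsto y_n$. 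Under this identification $D_p$ sends $x$ to $(p_A(\xi^n\cap x))_{n=0}^r$, i.e.\ to the map $\phi$ of the theorem, so it suffices to show that $D_p$ is an isomorphism.

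\emph{Step 2: Mayer--Vietoris reduction.} Zariski-cover $X$ by opens over which $E$ trivialises. Both $A^{*,*}$ and $A_{*,*}$ satisfy Mayer--Vietoris for such covers (via localization triangles), and $D_p$ is natural with respect to restriction to opens and the resulting boundary maps. A 5-lemma argument applied to the Mayer--Vietoris long exact sequences in homology and cohomology (which are compatible under the pairing) reduces the statement to the case $E=X\times \Hplane_-^{r+1}$, where $\HProj(E)\cong X\times \HProj^r$.

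\emph{Step 3: Trivial case and main obstacle.} Proceed by induction on $r$; the case $r=0$ is immediate, since $\HProj(E)\cong X$ and $\phi$ is the identity. For the inductive step, consider the closed embedding $X\times \HProj^{r-1}\hookrightarrow X\times \HProj^r$ induced by $\Hplane_-^r\subset \Hplane_-^{r+1}$. The cohomological PBF shows that the kernel of the restriction $A^{*,*}(\HProj^r)\to A^{*,*}(\HProj^{r-1})$ is a free rank-one $A^{*,*}(\pt)$-module sitting in bidegree $(4r,2r)$; combined with the cellular (in the motivic sense) nature of $\HProj^r$ this gives $\HProj^r/\HProj^{r-1}\cong \T^{\wedge 2r}$ in $\Ho(k)$. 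Smashing the cofibre sequence $X_+\wedge \HProj^{r-1}_+\to X_+\wedge \HProj^r_+\to X_+\wedge \T^{\wedge 2r}$ with $A$ and dualising the splitting of the cohomological sequence yields a short exact sequence whose outer terms become $\bigoplus_{n=0}^{r-1} A_{*-4n,*-2n}(X)$ (by the inductive hypothesis) and $A_{*-4r,*-2r}(X)$ (by the suspension isomorphism). The delicate point -- the main obstacle -- is identifying this abstract decomposition with the explicit formula $\phi$: one must verify that $\xi^n$ restricts to the Thom class of the top cell of $\HProj^n$ (up to an invertible scalar from $A^{0,0}(\pt)$), so that cap product with $\xi^n$ realises the Thom isomorphism on the $n$-th stratum. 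This verification goes through Theorem~\ref{thm:bijection_orientations}, which supplies the precise compatibility between the Borel class $\xi=b_1(\Hstruct(1))$ and Thom classes of rank-two symplectic bundles.
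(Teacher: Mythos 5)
Your Steps 1 and 2 are fine (the duality repackaging is harmless and the Mayer--Vietoris reduction is exactly what the paper does), but Step 3 contains a genuine gap. You assert a cofibre sequence $X_+\wedge\HProj^{r-1}_+\to X_+\wedge\HProj^r_+\to X_+\wedge\T^{\wedge 2r}$, justified by ``the cellular nature of $\HProj^r$'' plus the fact that the kernel of restriction in $A$-cohomology is free of rank one in bidegree $(4r,2r)$. Neither ingredient is available. Computing the $A$-cohomology of a cofibre for one symplectically oriented theory does not determine its motivic homotopy type, and the naive cell structure simply fails algebraically: $\HProj^{r-1}\subset\HProj^r$ has codimension $4$ with normal bundle (a twist of) $\Hstruct(1)^{\oplus 2}$, which is not trivial, and the open complement $\HProj^r-\HProj^{r-1}$ is not an affine space and not known to be $\A^1$-contractible --- unlike the topological situation where $\mathbb{HP}^r-\mathbb{HP}^{r-1}\cong\mathbb{H}^r$. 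This failure is precisely the difficulty that the geometric theorems of Panin--Walter are designed to circumvent, so your inductive step rests on an unproved (and most likely false, or at least inaccessible) decomposition.

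The paper's induction uses a different geometric input: by \cite[Theorems~3.1, 3.2, 3.4]{PW10a} there is a \emph{codimension two} closed subvariety $Y\subset\HProj^r$ (not $\HProj^{r-1}$), cut out transversally by a section of $\Hstruct(1)$, with $\HProj^r-Y$ $\A^1$-contractible and $Y\to\HProj^{r-1}$ an $\A^2$-bundle pulling $\Hstruct(1)$ back to $i^*\Hstruct(1)$. This yields a localization sequence $A_{*,*}(\pt)\to A_{*,*}(\HProj^r)\xrightarrow{i^{!A}} A_{*-4,*-2}(\HProj^{r-1})$ whose first map is split, and the ``main obstacle'' you flag --- matching the abstract splitting with cap product by powers of $\xi$ --- is resolved not by Theorem~\ref{thm:bijection_orientations} but by Lemma~\ref{lm:pullpush}: $i_Ai^{!A}(x)=b_1(\Hstruct(1))\cap x=\xi\cap x$, which converts $q_A(\xi^n\cap i^{!A}x)$ into $p_A(\xi^{n+1}\cap x)$ and makes the inductive diagram commute. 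Without this transfer identity (which requires the specific realization of $Y$ as a transversal zero locus), your identification of $\xi^n$ with ``the Thom class of the top cell'' remains a gesture rather than a proof.
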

\begin{proof}
A usual Mayer-Vietoris argument yields that it is sufficient to treat the case of a trivial symplectic bundle $E$, i.e. $\HProj(E)=\HProj^{r}\times X$. The proof does not depend on the base $X$, so we omit it from the notation.

By \cite[Theorems~3.1,~3.2,~3.4]{PW10a} there is a closed subvariety $Y\subset \HProj^r$ satisfying
\begin{itemize}
\item
$Y$ is a transversal intersection of a section $s\colon \HProj^r\to \Hstruct(1)$ and the zero section $z\colon \HProj^r\to \Hstruct(1)$,
\item
$\HProj^r-Y$ is $\A^1$-homotopy equivalent to a point,
\item
there is a morphism $\pi\colon Y\to \HProj^{r-1}$ which is an $\A^2$-bundle such that $\pi^*\Hstruct(1)\cong i^*\Hstruct(1)$, where $i\colon Y\to \HProj^r$ is the closed embedding.
\end{itemize}
Equip the normal bundle $N_i$ with the symplectic form induced by the isomorphism $ i^*\Hstruct(1)\cong N_i$. Identifying $A_{*,*}(\HProj^r-Y)\cong A_{*,*}(\pt)$ and $A_{*,*}(Y)\cong A_{*,*}(\HProj^{r-1})$ we obtain a long exact sequence in homology
\[
\dots\xrightarrow{\partial} A_{*,*}(\pt)\xrightarrow{j_{A}} A_{*,*}(\HProj^r) \xrightarrow{i^{!A}} A_{*-4,*-2}(\HProj^{r-1}) \xrightarrow{\partial} \dots
\]
Here $j$ is the composition $\pt\cong \HProj^r-Y \to \HProj^r$. The projection $\HProj^r\to \pt$ splits the first morphism, thus $i^{!A}$ is surjective. Denote $q\colon \HProj^{r-1}\to \pt$ the canonical projection and consider the following diagram.
\[
\xymatrix{
A_{*,*}(\pt) \ar[r]^{j_A} \ar[d]^= & A_{*,*}(\HProj^r) \ar[r]^{i^{!A}} \ar[d]^{\sum\limits_{n=0}^r p_A(\xi^n\cap -)} & A_{*-4,*-2}(\HProj^{r-1}) \ar[d]^{\sum\limits_{n=0}^{r-1} q_A(\xi^n\cap -)} \\
A_{*,*}(\pt) \ar[r]^(0.35){u} & \bigoplus\limits_{n=0}^{r} A_{*-4n,*-2n}(\pt) \ar[r]^v & \bigoplus\limits_{n=1}^{r-1} A_{*-4n,*-2n}(\pt)
}
\]
Here $u$ is the injection on the zeroth summand and $v$ is the projection forgetting about the zeroth summand. The left square commutes by Lemma~\ref{lm:cap_naturality}:
\[
\xi^n\cap j_A(x)=j_A(j^A(\xi^n)\cap x)=
\left[
\begin{array}{ll}
j_A(x), & n=0,\\
j_A(0\cap a)=0, & n>0.
\end{array}
\right.
\]
The right square commutes by Lemmas~\ref{lm:cap_naturality} and~\ref{lm:pullpush}:
\[
q_A(\xi^n\cap i^{!A}x)=p_Ai_A(\xi^n\cap i^{!A}x)=p_A(\xi^n\cap i_Ai^{!A}x)=p_A(\xi^{n+1}\cap x).
\]
The claim follows by induction.
\end{proof}

\begin{corollary}\label{cor:homological_fullflag}
Let $E$ be a symplectic bundle of rank $2r$ over a smooth variety $X$. Denote $\mathcal{U}_1,\mathcal{U}_2,\dots,\mathcal{U}_s$ the tautological rank $2$ symplectic bundles over $\HFlag(2^s,E)$. Put $\xi_i=b_1(\mathcal{U}_i)$ and denote $p\colon \HFlag(2^s,E)\to X$ the canonical projection. Then the homomorphism of $A^{*,*}(X)$-modules 
\[
A_{*,*}(\HFlag(2^{s},E))\to \bigoplus_{\substack{0\le n_i\le (r-i) \\ i=1\dots s}} A_{*-4(n_1+n_2+\dots+n_s),*-2(n_1+n_2+\dots+n_s)}(X)
\]
given by
\[
x\mapsto \sum_{\substack{0\le n_i\le (r-i) \\ i=1\dots s}} p_A((\xi_1^{n_1}\xi_2^{n_2}\dots \xi_s^{n_s})\cap x)
\]
is an isomorphism.
\end{corollary}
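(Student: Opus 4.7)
The plan is to prove the corollary by induction on the number $s$ of flag steps, applying Theorem~\ref{thm:homological_projective_bundle} iteratively along exactly the same tower that was used for the cohomological version in Corollary~\ref{cor:cohomological_fullflag}. The base case $s=1$ is immediate, since $\HFlag(2,E)=\HProj(E)$, and the claim reduces to the statement of Theorem~\ref{thm:homological_projective_bundle}.

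For the inductive step, let $q\colon \HFlag(2^s,E)\to \HFlag(2^{s-1},E)$ be the forgetful morphism. It realises the source as the quaternionic projective bundle $\HProj(\mathcal{V})$, where $\mathcal{V}$ is the rank $2(r-s+1)$ symplectic orthogonal complement of $\mathcal{U}_1\oplus\dots\oplus \mathcal{U}_{s-1}$ inside $q^*E$, and the tautological rank two symplectic subbundle on $\HProj(\mathcal{V})$ is precisely $\mathcal{U}_s$. Theorem~\ref{thm:homological_projective_bundle} applied to $q$ therefore yields an isomorphism of left $A^{*,*}(\HFlag(2^{s-1},E))$-modules given by $x\mapsto \sum_{n_s=0}^{r-s} q_A(\xi_s^{n_s}\cap x)$. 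Composing summand by summand with the inductive hypothesis applied to $p'\colon \HFlag(2^{s-1},E)\to X$, using the classes $\xi_1',\dots,\xi_{s-1}'$ defined there, produces an isomorphism of $A^{*,*}(X)$-modules
\[
A_{*,*}(\HFlag(2^{s},E))\xrightarrow{\simeq} \bigoplus_{\substack{0\le n_i\le r-i \\ i=1,\dots,s}} A_{*-4N,*-2N}(X),\quad N=n_1+\dots+n_s,
\]
sending $x$ to $\sum p'_A((\xi_1')^{n_1}\cdots(\xi_{s-1}')^{n_{s-1}}\cap q_A(\xi_s^{n_s}\cap x))$.

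It remains to identify this composite with the map in the statement. By functoriality of Borel classes and multiplicativity of $q^A$ on cohomology, on $\HFlag(2^s,E)$ one has $\xi_1^{n_1}\cdots\xi_{s-1}^{n_{s-1}}=q^A((\xi_1')^{n_1}\cdots(\xi_{s-1}')^{n_{s-1}})$. Associativity of cap product, recorded in Definition~\ref{def:cap}(1), together with the projection formula $q_A(q^A(a)\cap y)=a\cap q_A(y)$, which is an immediate instance of Lemma~\ref{lm:cap_naturality} for the square built from the diagonals and $q$, then give
\[
p'_A((\xi_1')^{n_1}\cdots(\xi_{s-1}')^{n_{s-1}}\cap q_A(\xi_s^{n_s}\cap x))=p_A(\xi_1^{n_1}\cdots\xi_s^{n_s}\cap x)
\]
for $p=p'\circ q$, and the induction closes. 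The only real obstacle is the bookkeeping involved in rearranging the iterated cap products at each level of the tower; no new geometric input beyond Theorem~\ref{thm:homological_projective_bundle}, the projection formula, and the functoriality of Borel classes is required.
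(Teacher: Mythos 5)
Your proof is correct and takes essentially the same route as the paper: the paper's own proof is just the one-line observation that $\HFlag(2^s,E)$ is an iterated quaternionic projective bundle, so that Theorem~\ref{thm:homological_projective_bundle} applies inductively along the tower. You have simply supplied the bookkeeping the paper leaves implicit --- the projection formula as an instance of Lemma~\ref{lm:cap_naturality}, associativity of the cap product, and functoriality of $b_1$ --- all of which check out.
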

\begin{proof}
Follows from Theorem~\ref{thm:homological_projective_bundle}, since one can present $\HFlag(2^s,E)$ as an iterated quaternionic projective bundle
\[
\HFlag(2^s,E) \to \HFlag(2^{s-1},E)\to \dots \to \HFlag(2,E)=\HP(E)
\]
\end{proof}

\begin{theorem}\label{thm:homological_grassmannian}
Let $E$ be a symplectic bundle of rank $2r$ over a smooth variety $X$. Denote $p\colon \HFlag(2^s,E)\to X$ and $q\colon \HGr(2s,E)\to X$ the canonical projections. Then the following duality homomorphisms, given by Definition~\ref{def:duality}, are isomorphisms:
\begin{gather*}
D_p\colon A_{*,*}(\HFlag(2^s,E))\xrightarrow{} \Hom_{A^{*,*}(X)} (A^{*,*}(\HFlag(2^s,E)),A_{*,*}(X)),\\
D_q\colon A_{*,*}(\HGr(2s,E))\xrightarrow{} \Hom_{A^{*,*}(X)} (A^{*,*}(\HGr(2s,E)),A_{*,*}(X)).
\end{gather*}
\end{theorem}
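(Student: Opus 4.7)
My plan is to establish the statement for $p$ first by matching explicit dual bases in cohomology and homology, then reduce the statement for $q$ to the case of $p$ via the canonical forgetful map $f\colon \HFlag(2^s,E)\to \HGr(2s,E)$ sending a flag to its top symplectic subspace. For $p$, Corollary~\ref{cor:cohomological_fullflag} presents $A^{*,*}(\HFlag(2^s,E))$ as a finitely generated free $A^{*,*}(X)$-module on the Borel monomials $\xi^{\alpha}=\xi_1^{n_1}\cdots\xi_s^{n_s}$, while Corollary~\ref{cor:homological_fullflag} identifies $A_{*,*}(\HFlag(2^s,E))$ with $\bigoplus_\alpha A_{*,*}(X)$ via $x\mapsto (p_A(\xi^\alpha\cap x))_\alpha$. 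Let $\{y_\alpha\}$ denote the preimages of the standard basis on the right, so $p_A(\xi^\alpha\cap y_\beta) = \delta_{\alpha\beta}$. Under these decompositions $D_p$ becomes, by Definition~\ref{def:duality}, the identity map $\bigoplus_\alpha A_{*,*}(X)\to \bigoplus_\alpha A_{*,*}(X)$, and so is an isomorphism.

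For $q$ I would factor $p = q\circ f$. Since $\HFlag(2^s,E)\cong \HFlag(2^{s-1},\mathcal{U})$ is a relative flag variety over $\HGr(2s,E)$ of the tautological rank $2s$ symplectic subbundle $\mathcal{U}$, the flag case just proved applies to $f$ as well, so the duality
\[
D_f\colon A_{*,*}(\HFlag(2^s,E))\to \Hom_{A^{*,*}(\HGr(2s,E))}(A^{*,*}(\HFlag(2^s,E)), A_{*,*}(\HGr(2s,E)))
\]
is an isomorphism. The extra ingredient is the projection formula $f_A(f^A(a)\cap x) = a\cap f_A(x)$ for $a\in A^{*,*}(\HGr(2s,E))$ and $x\in A_{*,*}(\HFlag(2^s,E))$, which falls out of Lemma~\ref{lm:cap_naturality} applied to the commutative square with vertical arrows the graph $(f,\id)\colon \HFlag(2^s,E)\to \HGr(2s,E)\wedge \HFlag(2^s,E)$ and the diagonal $\Delta\colon \HGr(2s,E)\to \HGr(2s,E)\wedge \HGr(2s,E)$ and horizontal arrows $f$ and $\id\wedge f$. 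Combined with $p_A = q_A\circ f_A$ this produces the key identity $D_p(x)(f^A(a)) = D_q(f_A(x))(a)$.

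The identity above, together with the standard tensor-hom adjunction along $A^{*,*}(X)\to A^{*,*}(\HGr(2s,E))$, arranges $D_p$, $D_f$ and $D_q$ into a commutative diagram exhibiting $\Hom_{A^{*,*}(\HGr(2s,E))}(A^{*,*}(\HFlag(2^s,E)), D_q)$ as the composite of the isomorphisms $D_p$ and $D_f^{-1}$. Since Corollary~\ref{cor:cohomological_fullflag} applied to $f$ realises $A^{*,*}(\HGr(2s,E))$ as a direct summand of $A^{*,*}(\HFlag(2^s,E))$ over itself (the summand spanned by the Borel monomial $1$), the functor $\Hom_{A^{*,*}(\HGr(2s,E))}(A^{*,*}(\HFlag(2^s,E)),-)$ reflects isomorphisms, which forces $D_q$ to be an isomorphism. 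The main technical obstacle I anticipate is bigraded bookkeeping: carrying the $\varepsilon$-signs from Lemma~\ref{lm:epsilon_commutativity} correctly through the projection formula and through the tensor-hom adjunction so that the resulting diagram really is commutative as one of bigraded $A^{*,*}(X)$-modules. Once that is under control, the deduction is purely formal.
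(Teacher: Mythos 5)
Your proposal is correct and follows essentially the same route as the paper: prove the flag case by matching the dual bases from Corollaries~\ref{cor:cohomological_fullflag} and~\ref{cor:homological_fullflag}, then deduce the Grassmannian case by viewing $\HFlag(2^s,E)$ as a relative flag bundle over $\HGr(2s,E)$, using the projection formula and the tensor-hom adjunction to identify $(D_q)_*\circ D_{p'}$ with $D_p$, and concluding via freeness of $A^{*,*}(\HFlag(2^s,E))$ over $A^{*,*}(\HGr(2s,E))$. The sign bookkeeping you worry about is harmless here since all classes being moved across the pairing are pulled back from the base, which is exactly what makes the pairing in Definition~\ref{def:duality} bilinear.
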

\begin{proof}
The first morphism is an isomorphism by Corollaries~\ref{cor:cohomological_fullflag},~\ref{cor:homological_fullflag}.

Denote $p'\colon \HFlag(2^s,E)\to \HGr(2s,E)$ the canonical projection and shorten the notation $\mathrm{HF}=\HFlag(2^s,E),\, \mathrm{HG}=\HGr(2s,E)$. Recall that $\mathrm{HF}$ is a quaternionic flag bundle over $\mathrm{HG}$, thus
\[
D_{p'}\colon A_{*,*}(\mathrm{HF})\xrightarrow{} \Hom_{A^{*,*}(\mathrm{HG})} (A^{*,*}(\mathrm{HF}),A_{*,*}(\mathrm{HG}))
\]
is an isomorphism by the above. Since $A^{*,*}(\mathrm{HF})$ is a free $A^{*,*}(\mathrm{HG})$-module by Corollary~\ref{cor:cohomological_fullflag} it is sufficient to check that the composition
\[
\xymatrix @C=3.2pc {
A_{*,*}(\mathrm{HF}) \ar[dr]_(0.4){(D_{q})_*\circ D_{p'}} \ar[r]^(0.4){D_{p'}}_(0.4){\simeq} & \Hom_{A^{*,*}(\mathrm{HG})} (A^{*,*}(\mathrm{HF}),A_{*,*}(\mathrm{HG})) \ar[d]^{(D_{q})_*} \\
&   \Hom_{A^{*,*}(\mathrm{HG})} (A^{*,*}(\mathrm{HF}), \Hom_{A^{*,*}(X)} (A^{*,*}(\mathrm{HG}),A_{*,*}(X)))
}
\]
is an isomorphism. The claim follows from the commutativity of the following diagram, which is straightforward.
\[
\xymatrix{
 & \Hom_{A^{*,*}(\mathrm{HG})} (A^{*,*}(\mathrm{HF}), \Hom_{A^{*,*}(X)} (A^{*,*}(\mathrm{HG}),A_{*,*}(X))) \\
A_{*,*}(\mathrm{HF}) \ar[ur]^(0.35){(D_{q})_*\circ D_{p'}} \ar[dr]_(0.45){D_p}^(0.55){\simeq} &  \Hom_{A^{*,*}(X)} (A^{*,*}(\mathrm{HF})\otimes_{A^{*,*}(\mathrm{HG})} A^{*,*}(\mathrm{HG}),A_{*,*}(X))   \ar[u]_{\cong} \\
& \Hom_{A^{*,*}(X)} (A^{*,*}(\mathrm{HF}) ,A_{*,*}(X)) \ar[u]_\cong
}
\]
\end{proof}

\begin{definition} \label{def:HGr_Hspace}
The operation of orthogonal sum of symplectic bundles yields a morphism $\HGr_+\wedge \HGr_+\to \HGr_+$ endowing $A_{*,*}(\HGr_+)$ with a ring structure
\[
A_{*,*}(\HGr_+)\times A_{*,*}(\HGr_+) \to A_{*,*}(\HGr_+).
\]
\end{definition}

\begin{definition} \label{def:gamma}
For $n\ge 0$ denote $\chi_n\in A_{4n,2n}(\HProj^\infty_+)$ the unique collection of elements satisfying
\[
\langle \xi^{m},  \chi_n\rangle=
\left[
\begin{array}{ll}
1, & m=n,\\
0, & m\neq n,
\end{array}
\right.
\]
for $\xi=b_1(\Hstruct(1))$. The existence and uniqueness of these elements is guaranteed  by Theorem~\ref{thm:homological_grassmannian} (consider $s=1$). Moreover, by the same theorem we know that
$A_{*,*}(\HProj^\infty_+)$ is a free $A^{*,*}(\pt)$-module with a basis given by $\{1,\chi_1,\chi_2,\hdots\}$.
Abusing the notation we denote by the same letters the elements $\chi_n=i_A(\chi_n)\in A_{4n,2n}(\HGr_+)$ for the canonical embedding $i\colon \HProj^\infty_+ \to \HGr_+$.
\end{definition}

\begin{theorem} \label{thm:homological_HGr}
Identify 
\[
A^{*,*}(\HGr_+)\cong A^{*,*}(\pt)[[b_1,b_2,\dots]]_h \cong A^{*,*}(\pt)[[\xi_1,\xi_2,\dots]]_h^{S_\infty}
\]
by Theorems~\ref{thm:splitting} and ~\ref{thm:cohomological_HGr} via $b_i(\tau^s)\leftrightarrow b_i \leftrightarrow \sigma_i(\xi_1,\xi_2,\hdots)$. Given a partition $\lambda=\{\lambda_1\ge\lambda_2\ge\dots\ge\lambda_k>0\}$ denote $\xi(\lambda) \in A^{*,*}(\pt)[[\xi_1,\xi_2,\dots]]_h^{S_\infty}$ the sum of all the elements in the orbit of $\xi_1^{\lambda_1}\xi_2^{\lambda_2}\dots \xi_k^{\lambda_k}$. Then
\begin{enumerate}
\item
$
\langle \xi(\lambda), \chi_1^{l_1}\chi_2^{l_2}\dots\chi_r^{l_r} \rangle=
\left[
\begin{array}{ll}
1, &  l_j=\#\{\lambda_i=j\}\, \text{for all $j\ge 1$},\\
0, &  \text{otherwise},
\end{array}
\right.
$
\item
the homomorphism of $A^{*,*}(\pt)$-algebras
\[
A^{*,*}(\pt)[x_1,x_2,\dots]\to A_{*,*}(\HGr_+)
\]
induced by $x_i\mapsto \chi_i$ is an isomorphism.
\end{enumerate}
\end{theorem}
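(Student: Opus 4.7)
The plan is to exploit the H-space structure on $\HGr$ from Whitney sum, together with the inclusion $\iota\colon \HProj^\infty\hookrightarrow \HGr$, in order to reduce both parts of the theorem to explicit computations on products of quaternionic projective spaces, where the defining relation $\langle\xi^m,\chi_n\rangle=\delta_{mn}$ is available. Concretely, for $L=l_1+l_2+\dots+l_r$ let $\mu_L\colon (\HProj^\infty_+)^{\wedge L}\to\HGr_+$ be the composition of $\iota^{\wedge L}$ with the iterated H-space multiplication from Definition~\ref{def:HGr_Hspace}. By associativity of the product,
\[
\chi_1^{l_1}\chi_2^{l_2}\dots\chi_r^{l_r}=(\mu_L)_A\bigl(\chi_1^{\otimes l_1}\otimes\chi_2^{\otimes l_2}\otimes\dots\otimes\chi_r^{\otimes l_r}\bigr).
\]
The map $\mu_L$ classifies the symplectic bundle $\bigoplus_{j=1}^L\pi_j^*\Hstruct(1)$, so Theorem~\ref{thm:splitting} identifies its Borel roots with the pullbacks $\xi'_j=\pi_j^A b_1(\Hstruct(1))$.

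For part (1), the Kronecker adjunction (a direct instance of Lemma~\ref{lm:cap_naturality}) yields
\[
\langle\xi(\lambda),\chi_1^{l_1}\dots\chi_r^{l_r}\rangle=\langle\mu_L^A\xi(\lambda),\chi_1^{\otimes l_1}\otimes\dots\otimes\chi_r^{\otimes l_r}\rangle.
\]
Under the identification of the theorem, $\mu_L^A$ sends $\xi_i\mapsto\xi'_i$ for $i\le L$ and $\xi_i\mapsto 0$ for $i>L$; hence $\mu_L^A\xi(\lambda)$ is the monomial symmetric polynomial $m_\lambda(\xi'_1,\dots,\xi'_L)$, i.e.\ the sum of all distinct monomials $\xi_1'^{a_1}\dots\xi_L'^{a_L}$ whose nonzero exponents, sorted, give $\lambda$. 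By the Kunneth-style compatibility of the Kronecker pairing with external products, each such monomial pairs with $\chi_1^{\otimes l_1}\otimes\dots\otimes\chi_r^{\otimes l_r}$ as $\prod_{j=1}^L\langle\xi^{a_j},\chi_{b_j}\rangle=\prod_{j=1}^L\delta_{a_j,b_j}$, where $(b_1,\dots,b_L)$ denotes the sequence containing $l_j$ consecutive entries equal to $j$. A nonzero contribution occurs exactly when the multiset $\{a_1,\dots,a_L\}$ coincides with $\{b_1,\dots,b_L\}$, which forces the multiset of nonzero parts of $\lambda$ to agree with $\{b_1,\dots,b_L\}$; this is precisely the multiplicity condition $l_j=\#\{\lambda_i=j\}$, and when it holds exactly one monomial contributes $1$.

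For part (2), denote the proposed homomorphism by $\Phi$. Injectivity is immediate from (1): if $\Phi(\sum_L c_L x^L)=0$, then pairing with $\xi(\lambda)$ for the partition corresponding to each multi-index $L$ reads off $c_L=0$. For surjectivity, use the colimit identity $A_{*,*}(\HGr_+)=\varinjlim_{s,n} A_{*,*}(\HGr(2s,2n)_+)$ (Lemma~\ref{lm:truncated_limit}(1)) to lift a given $x$ to some $A_{*,*}(\HGr(2s,2n)_+)$, then apply Theorems~\ref{thm:cohomological_HGr_finite} and~\ref{thm:homological_grassmannian} to express $x$ in terms of the basis dual to the finitely many relevant $\xi(\lambda)$'s. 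Put
\[
y=\sum_\lambda \langle\xi(\lambda),x\rangle\cdot\chi_1^{l_1(\lambda)}\chi_2^{l_2(\lambda)}\dots\in\operatorname{im}\Phi,
\]
with $l_j(\lambda)=\#\{\lambda_i=j\}$ (a finite sum in each fixed bidegree). Part (1) then ensures $\langle\xi(\lambda'),x-y\rangle=0$ for every partition $\lambda'$; choosing a finite Grassmannian containing a lift of $x-y$ and invoking Theorem~\ref{thm:homological_grassmannian} forces $x=y$.

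The main obstacle is the Kunneth-style compatibility invoked in the second step: the decomposition of $\langle\mu_L^A\xi(\lambda),\chi_1^{\otimes l_1}\otimes\dots\otimes\chi_r^{\otimes l_r}\rangle$ into a product of one-factor Kronecker pairings. This is not formal, since $A^{*,*}(\HProj^\infty_+)$ is only a completed free $A^{*,*}(\pt)$-module with topological basis $\{\xi^n\}$; one reduces to $\HProj^N$ (for which only finitely many $\xi^n$ survive in any given bidegree) and then iteratively applies Lemma~\ref{lm:cap_naturality} together with the construction of the external product classes $\chi_{b_1}\otimes\dots\otimes\chi_{b_L}$ to peel off one smash factor at a time. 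Once this compatibility is in place the remainder of the argument is combinatorial counting followed by finite-Grassmannian duality.
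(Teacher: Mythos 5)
Your argument is correct and follows essentially the same route as the paper: both reduce part (1) to the product $(\HProj^\infty)^{\times|l|}$ via the orthogonal-sum map, use Lemma~\ref{lm:cap_naturality} to move $\xi(\lambda)$ across the pairing, identify its pullback with a monomial symmetric polynomial in the pulled-back first Borel classes, and evaluate the pairing factorwise; part (2) is then deduced from part (1) exactly as in the paper, via Lemma~\ref{lm:truncated_limit} and the duality of Theorem~\ref{thm:homological_grassmannian} on finite Grassmannians. The K\"unneth-type compatibility you flag is indeed the point the paper leaves implicit (it follows from iterating Theorem~\ref{thm:homological_projective_bundle} over the base), and your proposed reduction handles it adequately.
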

\begin{proof}
Put $|l|=l_1+l_2+\dots+l_r$ and consider the canonical embedding 
\[
i\colon (\underbrace{\HProj^\infty\times \HProj^\infty\times \hdots  \times \HProj^\infty}_{|l|})_+\to \HGr_+
\]
given by orthogonal sum. Identify
\begin{gather*}
A^{*,*}((\HProj^\infty\times \HProj^\infty\times \hdots  \times \HProj^\infty)_+)=\bigoplus_{i_j\ge 0} A^{*,*}(\pt) \xi^{i_1} \otimes \xi^{i_2} \otimes\hdots\otimes \xi^{i_l},\\
A_{*,*}((\HProj^\infty\times \HProj^\infty\times \hdots  \times \HProj^\infty)_+)=\bigoplus_{i_j\ge 0} A^{*,*}(\pt) \chi_{i_1} \otimes \chi_{i_2} \otimes\hdots\otimes \chi_{i_l}.
\end{gather*}
Put 
\[
\chi^l=\chi_1^{l_1}\chi_2^{l_2}\hdots\chi_r^{l_r},\quad \chi_{\otimes}^l=\underbrace{\chi_{1} \otimes \hdots \otimes \chi_1}_{l_1}  \otimes \underbrace{\chi_{2} \otimes \hdots \otimes \chi_2}_{l_2}  \otimes \hdots \otimes \underbrace{ \chi_{r} \otimes \hdots \otimes \chi_r}_{l_r},
\]
and denote $\xi_\otimes(\lambda)$ the sum of all the elements in the orbit of $\xi^{\lambda_1}\otimes \xi^{\lambda_2}\otimes\hdots\otimes \xi^{\lambda_l}$ under the action of $S_l$. Here $\lambda_j=0$ for $j>k$.

We have $i_A(\chi_\otimes^l)=\chi^l$ and
\[
i^A(\xi(\lambda))=
\left[
\begin{array}{ll}
0, & k>|l|,\\
\xi_\otimes(\lambda), & k\le |l|.
\end{array}
\right.
\]
By Lemma~\ref{lm:cap_naturality} we have
$\langle \xi(\lambda), \chi^l \rangle= \langle i^A(\xi(\lambda)), \chi_\otimes^l \rangle.
$

If $k>|l|$ then $i^A(\xi(\lambda))=0$ and $\langle \xi(\lambda), \chi^l \rangle=0$ by the above. 

If $k\le |l|$ then we have
\begin{multline*}
\langle \xi(\lambda), \chi^l \rangle = \langle \xi_\otimes(\lambda), \chi_\otimes^l \rangle=\\
=\sum_{\substack{(\lambda_1',\hdots,\lambda_l')=\\=(\lambda_{\sigma(1)},\hdots,\lambda_{\sigma(l)}) \\ \text{for some $\sigma\in S_l$}}}
\langle \xi^{\lambda_1'},\chi_1 \rangle \hdots \langle \xi^{\lambda_{l_1}'},\chi_1 \rangle \langle \xi^{\lambda_{l_1+1}'},\chi_2 \rangle  \dots \langle \xi^{\lambda_{l_1+l_2}'},\chi_2 \rangle\dots \langle \xi^{\lambda_{l}'},\chi_r \rangle.
\end{multline*}
This expression equals to $1$ if  $l_j=\#\{\lambda_i=j\}$ for every $j\ge 1$ and equals to zero otherwise, so the first claim follows.

Lemma~\ref{lm:truncated_limit} together with Theorem~\ref{thm:homological_grassmannian} yield
\begin{multline*}
A_{*,*}(\HGr_+)=\varinjlim A_{*,*}(\HGr(2r,2n))=\\
=\varinjlim \Hom_{A^{*,*}(\pt)}(A^{*,*}(\HGr(2r,2n)), A_{*,*}(\pt)).
\end{multline*}
We have an explicit computation of $A^{*,*}(\HGr(2r,2n))$ given by Theorem~\ref{thm:cohomological_HGr_finite}, so the second claim follows from the first one.
\end{proof}

\section{Preliminaries on $\KO$}

In this section we gather the representability results for higher Grothendieck-Witt groups (also known as hermitian $\K$-theory) and fix a symplectic orientation on it. Recall that the characteristic of the base field is assumed to be different from $2$.

\begin{definition}
Let $X$ be a smooth variety and $U\subset X$ be an open subset. For $n,i\in \Z$ denote $\GW^{[n]}_i(X,U)$ higher Grothendieck-Witt groups defined by Schlichting \cite[Definition~8]{Sch10b}, see also \cite{Sch10a, Sch12}. Recall that by \cite[Proposition~6.3]{Sch12} (cf. \cite[Theorem~2.4]{Wal03}) for $i<0$ there is a canonical identification $\GW^{[n]}_i(X,U)\cong \W^{[n-i]}(X,U)$, where the latter groups are derived Witt groups defined by Balmer \cite{Bal99}. Moreover, $\GW^{[0]}_0(X)$ and $\GW^{[2]}_0(X)$ coincide with the Grothendieck-Witt group of $X$ introduced by Knebusch \cite{Kne77} and its symplectic version respectively. 

For an orthogonal (resp. symplectic) bundle $E$ over a smooth variety $X$ we denote 
\begin{itemize}
\item
$\langle E \rangle\in \GW^{[0]}_0(X)$ (resp. $\langle E \rangle\in \GW_0^{[2]}(X)$) the corresponding element in the Grothendieck-Witt group,
\item
$[E]\in \W^{[0]}(X)$ (resp. $[E]\in \W^{[2]}(X)$) the corresponding element in the Witt group.
\end{itemize}
\end{definition}

\begin{definition}
We need the following notation complementary to the one introduced in Definition~\ref{def:symplectic_grassmannians} (cf. \cite{PW10c}).
\begin{itemize}
\item
$\Hplane_{+}=\left( k^{\oplus 2}, \begin{pmatrix} 0 & 1 \\ 1 & 0 \end{pmatrix} \right)$ is the \textit{standard hyperbolic plane}.
\item
$\RGr(2r,2n)=\bigslant{\Orth_{2n}}{\Orth_{2r}\times \Orth_{2n-2r}}$ is the \textit{real Grassmannian}. Here the orthogonal groups are taken with respect to the hyperbolic quadratic form $x_1x_2+x_3x_4+\dots+x_{2n-1}x_{2n}$. Similar to the quaternionic case, the real Grassmannian could be described as the open subscheme of $\Gr(2r,\Hplane_{+}^{\oplus n})$ parametrizing subspaces on which the standard hyperbolic quadratic form is nondegenerate. 
\item
$\mathcal{U}^o_{2r,2n}$ is the tautological rank $2r$ orthogonal vector bundle over $\RGr(2r,2n)$.
\item
$\RGr=\varinjlim\limits_{r,n} (\RGr(2r,2n),*)$ is the \textit{infinite real Grassmannian} considered as a pointed motivic space. Here $*=\RGr(2,2)\in \RGr(2r,2n)$.
\end{itemize}
\end{definition}

\begin{theorem}[{\cite[Theorem~1.1]{ST13}, see also \cite[Theorem~8.2]{PW10c}}]\label{thm:GW_unstable_repr}
Let $X$ be a smooth variety and $U$ be an open subset of $X$. Denote $\underline{\Z}$ the sheaf associated to the presheaf $\Z$. Then there are natural isomorphisms
\begin{gather*}
\Hom_{\Ho(k)}(X/U, \underline{\Z}\times \RGr)\cong \GW^{[0]}_0(X,U),\\
\Hom_{\Ho(k)}(X/U, \underline{\Z}\times \HGr)\cong \GW^{[2]}_0(X,U).
\end{gather*}
Under these isomorphisms the tautological morphisms 
\[
\RGr(2r,2n)\to \{m\}\times \RGr,\quad \HGr(2r,2n)\to \{m\}\times \HGr
\]
correspond to 
\begin{gather*}
\langle\mathcal{U}^o_{2r,2n} \rangle+(m-r) \langle \Hplane_+ \rangle \in \GW^{[0]}_0(\RGr(2r,2n)),\,\\ 
\langle \mathcal{U}^s_{2r,2n} \rangle + (m-r) \langle \Hplane_{-}\rangle \in \GW^{[2]}_0(\HGr(2r,2n))
\end{gather*}
respectively.
\end{theorem}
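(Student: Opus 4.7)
The plan is to build a natural transformation
\[
\Phi\colon \Hom_{\Ho(k)}(X/U, \underline{\Z}\times \RGr)\to \GW^{[0]}_0(X,U),
\]
prove that it is bijective, and then run the symplectic analogue with $(\RGr,\Hplane_+,\mathcal{U}^o)$ replaced by $(\HGr,\Hplane_-,\mathcal{U}^s)$ to obtain the second isomorphism. Given a map $f\colon X/U\to \{m\}\times \RGr$ factoring through some $\{m\}\times \RGr(2r,2n)$, I would define $\Phi(f)=\langle f^*\mathcal{U}^o_{2r,2n}\rangle +(m-r)\langle \Hplane_+\rangle$; the choice of basepoint $\RGr(2,2)\in \RGr(2r,2n)$ ensures that the restriction of this class to $U$ is trivial, so the expression really lives in the relative Grothendieck-Witt group $\GW^{[0]}_0(X,U)$. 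Well-definedness on $\Ho(k)$-morphisms uses $\A^1$-invariance and Nisnevich descent of the presheaves $\GW^{[0]}_0(-,-)$ on smooth schemes, both established by Schlichting in \cite{Sch10a, Sch10b}.

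For bijectivity I would first apply the Jouanolou trick to reduce to $X$ and $U$ affine, where the classical classifying-space picture for vector bundles applies. Over an affine smooth scheme every nondegenerate symmetric bundle $E$ of rank $2r$ embeds as an orthogonal direct summand into $\Hplane_+^{\oplus n}$ for some large $n$, and such an embedding yields a classifying map $X\to \RGr(2r,2n)$ whose pullback of $\mathcal{U}^o_{2r,2n}$ recovers $E$. Surjectivity of $\Phi$ then follows by writing an arbitrary Grothendieck-Witt class as $\langle E\rangle -r\langle \Hplane_+\rangle$ after adding hyperbolic summands, and classifying $E$ by such a map with $m=0$. Injectivity comes from the standard convexity argument: two orthogonal embeddings of $E$ into a sufficiently stabilized $\Hplane_+^{\oplus n}$ are joined by an affine-linear $\A^1$-homotopy of embeddings.

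The hard part is the comparison of naive $\A^1$-homotopy classes of maps into $\underline{\Z}\times \RGr$ with morphisms in $\Ho(k)$. Naively one controls only isomorphism classes of pulled-back bundles together with concordances, whereas $\Hom_{\Ho(k)}$ involves the derived simplicial mapping space obtained after Nisnevich-local and $\A^1$-local fibrant replacement. To bridge this gap one must show that the simplicial presheaf $\underline{\Z}\times \RGr$ is motivically fibrant on smooth affine schemes (in the sense of affine $\A^1$-representability \`a la Asok-Hoyois-Wendt), or equivalently that the presheaf $X\mapsto \GW^{[0]}_0(X)$ has vanishing positive-degree Nisnevich hypercohomology on smooth affine schemes. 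This is essentially the geometric-model theorem of Schlichting-Tripathi \cite{ST13} and Panin-Walter \cite{PW10c}, and it ultimately relies on the classification of $\Orth_{2n}$- and $\Sp_{2n}$-torsors on smooth affine schemes via their stabilized Grassmannian classifying spaces.
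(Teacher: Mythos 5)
The paper offers no proof of this statement: it is imported wholesale from \cite[Theorem~1.1]{ST13} and \cite[Theorem~8.2]{PW10c}, so there is no internal argument to compare yours against. Judged on its own terms, your outline reproduces the broad strategy of those references: define $\Phi$ by pulling back the tautological bundle, use Jouanolou plus stabilization of orthogonal embeddings into $\Hplane_+^{\oplus n}$ for surjectivity and injectivity on naive homotopy classes, and then compare naive classes with morphisms in $\Ho(k)$. The identification of the tautological morphisms with $\langle\mathcal{U}^o_{2r,2n}\rangle+(m-r)\langle\Hplane_+\rangle$ is then immediate from your definition of $\Phi$, so that part of the statement is unproblematic.

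The genuine gap is that your final paragraph defers the decisive step back to the theorem being proved. The claim that $\underline{\Z}\times\RGr$ is a motivically local model for $\GW^{[0]}_0$ --- equivalently, that naive $\A^1$-homotopy classes of maps from smooth (affine) schemes into it agree with $\Hom_{\Ho(k)}$ --- is not a lemma you may quote from \cite{ST13} and \cite{PW10c} while reconstructing their result; it \emph{is} their result, and establishing it requires real input (group completion for the $H$-space $\coprod_n B\Orth_{2n}$, the Morel--Voevodsky criterion combining Nisnevich descent with $\A^1$-invariance for the relevant deloopings and not merely for $\GW_0$, and the homotopy invariance and Mayer--Vietoris theorems of \cite{Sch10a,Sch10b}). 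Two further points are passed over. First, a morphism $X/U\to\underline{\Z}\times\RGr$ in $\Ho(k)$ does not literally factor through a finite Grassmannian, so even defining $\Phi$ on genuine $\Ho(k)$-morphisms needs a compactness argument for the filtered colimit defining $\RGr$. Second, your bundle-theoretic surjectivity and injectivity arguments address only the absolute case, whereas the statement concerns the relative groups $\GW^{[0]}_0(X,U)$; the reduction of the relative case to the absolute one requires the localization sequences of \cite{Sch10b}, not just the observation that the basepoint $\RGr(2,2)$ trivializes the class on $U$.
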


\begin{remark} \label{rem:classes_as_operations}
Let $A$ be a symplectically oriented spectrum. Then this theorem via the Yoneda Lemma allows us to interpret characteristic classes, i.e. elements of $A^{*,*}(\HGr)$, as natural transformations $\GW^{[2]}_0(X)\to A^{*,*}(X)$.
\end{remark}

\begin{definition}
Let $Y$ be a pointed motivic space. Put 
\begin{align*}
&\GW^{[0]}_0(Y)=\Hom_{\Ho(k)}(Y, \underline{\Z}\times \RGr),\\
&\GW^{[2]}_0(Y)=\Hom_{\Ho(k)}(Y, \underline{\Z}\times \HGr).
\end{align*}
For a family of pointed smooth varieties $(X_1,x_1), (X_2,x_2),\dots, (X_m,x_m)$ and $n=0,2$ we identify $\GW^{[n]}_0((X_1,x_1)\wedge (X_2,x_2)\wedge \dots \wedge (X_m,x_m))$ with the subgroup of $\GW^{[n]}_0(X_1\times X_2 \times \dots \times X_m)$ consisting of all the elements $\alpha$ satisfying $\alpha|_{X_1\times\dots \times X_{j-1}\times \{x_j\}\times X_{j+1}\times \dots \times X_m}=0$ for all $j$.
\end{definition}

\begin{definition}
Let $\tau^s\in \GW^{[2]}_0(\HGr)$ and $\tau^o\in \GW^{[0]}_0(\RGr)$ be the tautological elements over the infinite Grassmannians represented by identity morphisms $\HGr\to \{0\}\times \HGr$ and $\RGr\to \{0\}\times \RGr$ and satisfying
\[
\tau^s|_{\HGr(2r,2n)}= \langle\mathcal{U}^s_{2r,2n} \rangle - r \langle \Hplane_{-} \rangle,\, \tau^o|_{\RGr(2r,2n)}= \langle \mathcal{U}^o_{2r,2n} \rangle - r \langle \Hplane_{+} \rangle.
\]
\end{definition}

\begin{definition}\label{def:KO}
The periodic $\HP^1$-spectrum $\KO$ is given by the spaces
\[
\KO=(\RGr, \HGr, \RGr, \HGr, \hdots)
\]
and structure maps 
\[
\sigma_{\KO}^{o}\colon \HP^1\wedge \RGr \to \HGr,\quad \sigma_{\KO}^{s}\colon \HP^1\wedge \HGr \to \RGr
\]
satisfying 
\begin{gather*}
(\sigma_{\KO}^{o})^{\GW}(\tau^s)|_{\HP^1\wedge \RGr(2r,2n)} =(\langle \Hstruct(1) \rangle - \langle \Hplane_{-} \rangle) \boxtimes \tau^o|_{\RGr(2r,2n)},\\ 
(\sigma_{\KO}^{s})^{\GW}(\tau^o)|_{\HP^1\wedge \HGr(2r,2n)} =(\langle \Hstruct(1) \rangle - \langle \Hplane_{-} \rangle) \boxtimes \tau^s|_{\HGr(2r,2n)}.
\end{gather*}
Here $\boxtimes$ is induced by the external tensor product of vector bundles, 
\[
E_1\boxtimes E_2 = p_1^*E\otimes p_2^* E_2
\]
for vector bundles $E_1$ and $E_2$ over $X_1$ and $X_2$ respectively with projections $p_i\colon X_1\times X_2\to X_i$. Note that (external) tensor product of two symplectic vector bundles has a canonical orthogonal structure, while (external) tensor product of a symplectic and an orthogonal bundle is symplectic.

The above morphisms $\sigma_{\KO}^{o}$ and $\sigma_{\KO}^{s}$ exist as morphisms of pointed sheaves by \cite[Proposition 12.4, Lemma 12.5,12.6]{PW10c}. This defined spectrum is canonically isomorphic in $\SH(k)$ to the spectra $\mathbf{BO}^{geom}$ and $\mathbf{BO}$ constructed in \cite{PW10c}.
\end{definition}

\begin{theorem}[{\cite[Theorems~1.3,~1.5]{PW10c}}] \label{thm:stable_representability}
The spectrum $\KO$ can be endowed with the structure of a commutative ring spectrum $(\KO,m_{\KO},u_{\KO})$. Moreover, this commutative ring spectrum represents higher Grothendieck-Witt groups, i.e. for every smooth variety $X$ and an open subset $U\subset X$ there exist canonical functorial isomorphisms
\[
\Theta\colon \KO^{i,j}(X/U)\xrightarrow{\simeq} \GW^{[j]}_{2j-i}(X,U)
\]
satisfying
\begin{enumerate}
\item
$\Theta$ commutes with the connecting homomorphisms $\partial$ in localization sequences,
\item
the $\cup$-product on $\KO^{*,*}(-)$ induced by the monoid structure of $\KO$ agrees with the Gille-Nenashev right pairing \cite[Theorem~2.9]{GN03} lifted to $\GW^{[*]}_0(-)$ as in \cite[\S 4]{PW10c},
\item
$\Theta(1)=1, \Theta(\varepsilon)=\langle -1 \rangle$.
\end{enumerate}
\end{theorem}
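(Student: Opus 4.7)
The plan is to bootstrap from the unstable representability Theorem~\ref{thm:GW_unstable_repr}. First I would verify that the maps $\sigma^o_\KO, \sigma^s_\KO$ from Definition~\ref{def:KO} realize $\KO$ as a genuine $\HP^1$-spectrum: by Theorem~\ref{thm:GW_unstable_repr} it suffices to show that their adjoints $\HGr \to \Omega_{\HP^1}\RGr$ and $\RGr \to \Omega_{\HP^1}\HGr$ are $\A^1$-weak equivalences, which is exactly Bott periodicity for hermitian $\K$-theory, asserting that cup product with $\langle \Hstruct(1) \rangle - \langle \Hplane_{-}\rangle$ gives an isomorphism $\GW^{[0]}_0(X) \xrightarrow{\simeq} \GW^{[2]}_0(\HP^1 \wedge X_+)$ (and similarly with $[0]$ and $[2]$ swapped).

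For the ring structure, external tensor product of bundles respects duality and produces natural pairings $\GW^{[n]}_0(X) \otimes \GW^{[m]}_0(Y) \to \GW^{[n+m]}_0(X \times Y)$ satisfying $[0]+[0]=[0]$, $[0]+[2]=[2]$, $[2]+[2]=[0]$ modulo $[4]$. Via unstable representability these lift to morphisms of pointed motivic spaces $\RGr \wedge \RGr \to \RGr$, $\RGr \wedge \HGr \to \HGr$ and $\HGr \wedge \HGr \to \RGr$; checking compatibility with $\sigma^o_{\KO},\sigma^s_{\KO}$ assembles a multiplication $m_\KO$, with unit coming from the trivial symmetric line bundle. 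Associativity and homotopy commutativity are inherited from the corresponding properties of tensor product.

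For the isomorphism $\Theta$, I would unwind the definition of $\KO^{i,j}(X/U)$ as a stable homotopy class, apply Theorem~\ref{thm:GW_unstable_repr} at each level, and use Schlichting's $\A^1$-representability of positive degree $\GW^{[n]}_i$ \cite{Sch10b,Sch12} to identify the answer with $\GW^{[j]}_{2j-i}(X,U)$. The bidegree matching reflects the $(4,2)$-periodicity of $\HP^1$: one $\HP^1$-suspension shifts the Grothendieck--Witt line by $2$ modulo $4$ and the homological degree by $4$, while the $\Sph^{p,q}$-suspension contributes the rest. Compatibility with $\partial$ is automatic from functoriality applied to the cofiber sequence $U \to X \to X/U$. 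Agreement of $\cup$ with the Gille--Nenashev pairing reduces to both being defined via tensor product of forms. The normalizations $\Theta(1)=1$ and $\Theta(\varepsilon) = \langle -1 \rangle$ follow from identifying the permutation on $\Sph^{2,1}\wedge\Sph^{2,1}$ with the unit form $\langle -1 \rangle$, which is classical.

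The main obstacle will be the bidegree bookkeeping in constructing $\Theta$: the formula $\KO^{i,j} \cong \GW^{[j]}_{2j-i}$ requires tracking carefully how the $(4,2)$-periodicity coming from the $\HP^1$-structure interacts with the internal Bott periodicity of hermitian $\K$-theory, and ensuring the identification is natural for cofiber sequences and not merely for individual spaces. A secondary subtlety is establishing commutativity of $m_\KO$ in $\SH(k)$ strictly (rather than only up to the sign $\varepsilon$), which requires matching signs appearing on symmetric bundle tensor products with the permutation on $\Sph^{2,1}\wedge\Sph^{2,1}$.
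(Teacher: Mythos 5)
This theorem is not proved in the paper at all: it is imported verbatim from \cite[Theorems~1.3,~1.5]{PW10c} and used as a black box, so there is no internal argument to compare your proposal against. Your outline does track, at a high level, the strategy of the cited source (Panin--Walter build the multiplication from external tensor products via the unstable representability of $\GW^{[0]}_0$ and $\GW^{[2]}_0$ on the Grassmannian models, prove the spectrum is an $\Omega_{\HP^1}$-spectrum via a Bott-periodicity statement, and match the products with the Gille--Nenashev pairings). So the plan is sound as a reconstruction of where the proof lives, but as a proof it has real gaps.

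The most serious one is in your first step: knowing that the adjoint $\RGr\to\Omega_{\HP^1}\HGr$ induces a bijection on $\Hom_{\Ho(k)}(X/U,-)$, i.e.\ on the $\GW_0$-level, does \emph{not} suffice to conclude it is an $\A^1$-weak equivalence. Objects of the form $X/U$ do not detect weak equivalences; you need bijections on $[X_+\wedge \Sph^{n,0},-]$ for all $n\ge 0$, i.e.\ the full Schlichting--Tripathi representability of the higher groups $\GW^{[m]}_i$, $i\ge 0$, together with the quaternionic projective bundle theorem in \emph{all} those degrees, not just degree $0$. You invoke the higher representability only later, when constructing $\Theta$, but it is already needed to get the spectrum structure off the ground. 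A second gap: assembling the levelwise maps $\RGr\wedge\RGr\to\RGr$, etc., into a morphism $m_{\KO}\colon\KO\wedge\KO\to\KO$ of spectra, and then verifying associativity and commutativity in $\SH(k)$, is obstructed by $\varprojlim^1$/phantom-map issues (a compatible family of space-level maps does not determine a unique map of spectra); this is precisely where \cite{PW10c} spend much of their effort, and your sketch does not address it. Finally, a small but symptomatic slip in the bookkeeping: an $\HP^1$-suspension sends $\KO^{i,j}$ to $\KO^{i+4,j+2}$, which shifts the line $[j]$ by $2$ and leaves the homological degree $2j-i$ \emph{unchanged}, rather than shifting it by $4$ as you state.
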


\begin{remark}
In view of the above theorem we identify $\KO^{0,0}(X)\cong \GW^{[0]}_0(X)$ and $\KO^{4,2}(X)\cong \GW^{[2]}_0(X)$.
\end{remark}

\begin{theorem}\label{thm:KO_orientation}
The rule which assigns to a rank $2$ symplectic bundle $E$ over a smooth variety $X$ class $b_1^{\KO}(E)=\langle E \rangle - \langle \Hplane_{-} \rangle \in \KO^{4,2}(X)$ can be uniquely extended to a Borel classes theory and by Theorem~\ref{thm:bijection_orientations} induces a symplectic orientation of $\KO$.
\end{theorem}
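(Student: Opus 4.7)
The plan is to verify the axioms of a Borel classes theory for the prescribed rank-two assignment, extend uniquely to higher ranks, and invoke Theorem~\ref{thm:bijection_orientations} to obtain the associated symplectic orientation. Three of the six Borel axioms are essentially formal from the definition $b_1^{\KO}(E) = \langle E \rangle - \langle \Hplane_{-}\rangle$ combined with the representability isomorphism $\Theta$ of Theorem~\ref{thm:stable_representability}: isomorphism invariance, naturality under pullback, and the normalization $b_1^{\KO}(\Hplane_{-}) = 0$ all hold by inspection.

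The crux of the matter is the projective bundle formula for $\HProj^1 \times X$. I would split $\KO^{*,*}(\HProj^1 \times X) \cong \KO^{*,*}(X) \oplus \widetilde{\KO}^{*,*}(\HProj^1 \wedge X_+)$ using the base point $*$; since $\Hstruct(1)|_* \cong \Hplane_{-}$, the class $b_1^{\KO}(\Hstruct(1))$ lies in the reduced summand. Corollary~\ref{cor:HP1_sphere} together with the suspension isomorphism $\Sigma_{\HProj^1}$ identifies $\widetilde{\KO}^{*,*}(\HProj^1 \wedge X_+) \cong \KO^{*-4,*-2}(X)$, so the claim reduces by naturality and the universal case $X = \pt$ to the single identity $b_1^{\KO}(\Hstruct(1)) = \Sigma_{\HProj^1}(1)$ in $\widetilde{\KO}^{4,2}(\HProj^1)$. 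This is precisely what the spectrum $\KO$ was built to encode: the formula $(\sigma_{\KO}^{s})^{\GW}(\tau^o) = (\langle \Hstruct(1)\rangle - \langle \Hplane_{-}\rangle) \boxtimes \tau^s$ from Definition~\ref{def:KO} translates, via adjunction for the $\HP^1$-suspension and the representability $\Theta$, into exactly the desired equation after identifying $\HP^1 \simeq \HProj^1$ in $\Ho(k)$.

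Once the $\HProj^1$-case is in hand, higher Borel classes are defined inductively by iterating the relative projective bundle theorem (Theorem~\ref{thm:proj_bundle_cohomology}) over $\HProj(E)$ in the standard way, and the remaining two axioms (vanishing above half-rank and the Whitney sum formula) follow by standard arguments; uniqueness of the extension is automatic from Theorem~\ref{thm:splitting}, which provides a cohomologically injective pullback along which any $b_i^{\KO}(E)$ must coincide with $\sigma_i(\xi_1, \dots, \xi_r)$ in the Borel roots $\xi_j = b_1^{\KO}(E_j)$. The associated symplectic orientation is then produced directly by Theorem~\ref{thm:bijection_orientations}. The principal obstacle throughout is the identity $b_1^{\KO}(\Hstruct(1)) = \Sigma_{\HProj^1}(1)$: although morally this is exactly what the construction of $\KO$ was designed to yield, transporting the Grothendieck-Witt formula for $\sigma_{\KO}^s$ through the adjunction and the isomorphism $\Theta$ into a clean statement in $\KO^{4,2}(\HProj^1)$ requires careful bookkeeping.
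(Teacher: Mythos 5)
Your proposal is correct in substance but takes a genuinely different route from the paper: the paper's proof is a two-line citation, deducing existence of the Borel classes theory from \cite[Theorem~5.1]{PW10c} (which establishes the quaternionic projective bundle formula for $\mathbf{BO}$ directly) and uniqueness from \cite[Theorem~14.4b]{PW10a} (which says that the rank-two data subject to axioms (1)--(4) extends uniquely to a full Borel classes theory). You instead re-derive the key input internally: you reduce axiom (3) to the normalization identity $b_1^{\KO}(\Hstruct(1))=\Sigma_{\HProj^1}1$ via the base-point splitting and Corollary~\ref{cor:HP1_sphere}, and you extract that identity from the structure maps of $\KO$. This is exactly the content of Corollary~\ref{cor:normalization}(1), which the paper proves \emph{after} this theorem from Lemmas~\ref{lm:stabilization} and~\ref{lm:unit_KO}; there is no circularity, since those lemmas depend only on the construction of $\Theta$, but you should be aware that the clean statement you need is the unit-map description of Lemma~\ref{lm:unit_KO} rather than the structure-map formula for $\sigma^s_{\KO}$ alone. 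Your approach buys self-containedness at the cost of the ``careful bookkeeping'' you acknowledge, which is precisely what those two lemmas supply.

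One point deserves care: Theorems~\ref{thm:proj_bundle_cohomology} and~\ref{thm:splitting}, which you invoke to build the higher classes and to prove uniqueness, are stated in the paper for spectra that are \emph{already} symplectically oriented, so quoting them verbatim would be circular. The argument goes through because their proofs in \cite{PW10a} use only the rank-two data (the class $b_1$ of rank-two bundles and the $\HProj^1$-bundle formula), so the relative projective bundle theorem and the flag-bundle injectivity are available before the full theory is constructed; your uniqueness argument via $f^*E\cong E_1\oplus\cdots\oplus E_r$, axiom (5) for the $E_i$, and the Whitney formula is then correct. Making this bootstrapping explicit is the one substantive gap between your sketch and a complete proof, and it is exactly the content of the cited \cite[Theorem~14.4b]{PW10a}.
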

\begin{proof}
Existence of the Borel classes theory follows from \cite[Theorem~5.1]{PW10c} and uniqueness follows from \cite[Theorem~14.4b]{PW10a}
\end{proof}

The next two lemmas follow immediately from the construction of $\Theta$.
\begin{lemma}\label{lm:stabilization}
Let $X$ be a smooth variety. Then the following diagram commutes.
\[
\xymatrix{
\Hom_{\Ho (k)}(X_+,\HGr) \ar[d]^i \ar[r]^(0.42){\Sigma^\infty_{\HP^1}} & \Hom_{\SH (k)}(\Sigma^\infty_{\HP^1}X_+, \Sigma^\infty_{\HP^1}\HGr) \ar[d]_{\cong}^{\phi} \\
\Hom_{\Ho (k)}(X_+,\underline{\Z}\times \HGr) \ar[dd]^{\cong}_f & \Hom_{\SH (k)}(\Sigma^\infty_{\HP^1}X_+,\Tr_1 \KO\wedge \HP^1) \ar[d]^{j}\\
 & \Hom_{\SH (k)}(\Sigma^\infty_{\HP^1}X_+, \KO\wedge \HP^1)\ar[d]^=\\
\GW^{[2]}_0(X) & \KO^{4,2}(X) \ar[l]_{\Theta}^{\cong}
}
\]
Here 
\begin{itemize}
\item
$i$ is induced by the identity morphism $\HGr\xrightarrow{} \{0\}\times \HGr$, 
\item
$\phi$ is induced by the canonical isomorphisms
\[
\Sigma^\infty_{\HP^1}\HGr \xleftarrow{\simeq} \Sigma^\infty_{\HP^1}\HGr\{-1\}\wedge \HP^1 \xrightarrow{\simeq} \Tr_1 \KO\wedge \HP^1,
\] 
\item
$j$ is induced by the canonical morphism $\Tr_1 \KO\to \KO$, 
\item
$f$ and $\Theta$ are given by Theorem~\ref{thm:GW_unstable_repr} and~\ref{thm:stable_representability} respectively.
\end{itemize}
\end{lemma}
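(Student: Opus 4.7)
The plan is to observe that the diagram essentially encodes the construction of the isomorphism $\Theta$ at bidegree $(4,2)$, and to reduce the general statement to the universal case via the Yoneda lemma.

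First, every arrow in the diagram is natural in $X$, so it suffices to verify commutativity at the universal element $\id_{\HGr} \in \Hom_{\Ho(k)}(\HGr_+, \HGr)$ obtained by taking $X = \HGr$. Its image under $i$ is, by definition, the tautological class $\tau^s \in \GW^{[2]}_0(\HGr)$, so one needs only check that traveling down the right column and applying $\Theta$ also produces $\tau^s$.

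Second, I would unfold the construction of $\Theta$ at bidegree $(4,2)$ from \cite{PW10c}. The isomorphism $\Theta\colon \KO^{4,2}(X)\to\GW^{[2]}_0(X)$ is constructed precisely as the composition (followed by $f$) obtained by representing a stable morphism $X_+\to \KO\wedge\HP^1$ at the first truncation level: one uses the canonical identifications $\Tr_1 \KO \cong \Sigma^\infty_{\HP^1}\HGr\{-1\}$ and $\Sigma^\infty_{\HP^1}\HGr\{-1\}\wedge \HP^1 \cong \Sigma^\infty_{\HP^1}\HGr$ to pull the stable map back to an unstable map $X_+\to \HGr$, which is then converted into an element of $\GW^{[2]}_0(X)$ via Theorem~\ref{thm:GW_unstable_repr}. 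Applied to $\id_\HGr$, this is tautologically $\tau^s$, in agreement with the characterizing property $\Theta(1)=1$ of Theorem~\ref{thm:stable_representability}(3).

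The step requiring the most care is aligning the conventions in \cite{PW10c} with the specific chain of canonical maps displayed in the statement, in particular verifying that no sign or permutation of $\HP^1$-factors intervenes when passing from $\Tr_1 \KO \wedge \HP^1$ to $\KO \wedge \HP^1$ via $j$. Once this matching is in place, the commutativity is a direct consequence of how $\Theta$ is manufactured; by Yoneda, the universal case then upgrades to the statement for arbitrary $X$ and arbitrary $\alpha \in \Hom_{\Ho(k)}(X_+,\HGr)$.
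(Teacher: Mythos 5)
Your proposal is correct and is essentially the paper's argument: the paper offers no details beyond the remark that the lemma ``follows immediately from the construction of $\Theta$'', which is exactly the content of your second and third paragraphs. The Yoneda reduction to the universal element is a harmless organizational extra (note only that $\HGr$ is an ind-variety, so one should either extend the diagram to pointed motivic spaces or use compactness of $X_+$ to factor through a finite Grassmannian).
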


\begin{lemma}\label{lm:unit_KO}
The following diagram commutes. 
\[
\xymatrix @C=7pc {
\Sigma^{\infty}_{\HP^1} \HP^1 \{-1\} \ar[r]^{\phi}_\simeq \ar[dr]_{u_{\KO}'} & \mathbb{S} \ar[d]^{u_{\KO}} \\
&  \KO
}
\]
Here
\begin{itemize}
\item
$u_{\KO}$ is the unit morphism,
\item
$\phi$ is an isomorphism which is identity starting from the first space,
\item
$u_{\KO}'=(f_0,f_1,f_2,\dots)$ with $f_n\colon (\HP^1)^{\wedge n}\to \KO_n$ satisfying
\begin{align*}
&f_{2m-1}^{\GW}(\tau^s)= \underbrace{(\langle \Hstruct(1) \rangle - \langle  \Hplane_{-} \rangle )\boxtimes \dots \boxtimes (\langle \Hstruct(1) \rangle - \langle  \Hplane_{-} \rangle )}_{2m-1}, \\
&f_{2m}^{\GW}(\tau^o)= \underbrace{(\langle \Hstruct(1) \rangle - \langle  \Hplane_{-} \rangle )\boxtimes \dots \boxtimes (\langle \Hstruct(1) \rangle - \langle  \Hplane_{-} \rangle )}_{2m}
\end{align*}
for $n\ge 1$.
\end{itemize}
\end{lemma}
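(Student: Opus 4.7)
The plan is to identify $u'_{\KO}\circ\phi^{-1}$ with $u_{\KO}$ as morphisms $\SSph\to\KO$ in $\SH(k)$, using that both represent the unit $1\in\KO^{0,0}(\pt)\cong\GW^{[0]}_0(k)$. The result is essentially tautological once one unpacks the construction of $\KO$ as a monoid from \cite{PW10c}, but the bookkeeping requires some care.

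First I would clarify the isomorphism $\phi$: the spectrum $\Sigma^\infty_{\HP^1}\HP^1\{-1\}$ has $\pt$ in level $0$ and $(\HP^1)^{\wedge n}$ in level $n\ge 1$, with identity structure maps at positive levels. Hence it is canonically stably equivalent to $\SSph$, with $\phi$ being the identity from level $1$ onwards, exactly as claimed.

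Next I would verify that the collection $(f_n)_{n\ge 0}$ genuinely assembles into a morphism of $\HP^1$-spectra. Since $\Hstruct(1)|_{*}=\Hplane_{-}$, the classifying class $(\langle\Hstruct(1)\rangle-\langle\Hplane_{-}\rangle)^{\boxtimes n}$ vanishes on each coordinate basepoint, so by Theorem~\ref{thm:GW_unstable_repr} a pointed map $f_n$ exists and is uniquely determined. Compatibility with the structure maps at level $n\ge 1$ reduces via Theorem~\ref{thm:GW_unstable_repr} to the equation
\[
f_{n+1}^{\GW}(\tau^{?})=(\sigma_{\KO}^{?}\circ(\id_{\HP^1}\wedge f_n))^{\GW}(\tau^{?}),
\]
where $?\in\{s,o\}$ depending on parity. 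Unpacking the right-hand side using the defining formulas of $\sigma_{\KO}^{o},\sigma_{\KO}^{s}$ in Definition~\ref{def:KO} together with the inductive hypothesis yields $(\langle\Hstruct(1)\rangle-\langle\Hplane_{-}\rangle)^{\boxtimes(n+1)}$, matching the left-hand side. Compatibility at the level $0\to 1$ transition is vacuous, since the source structure map $\HP^1\wedge\pt\to\HP^1$ collapses to the basepoint inclusion.

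Finally I would identify $u'_{\KO}\circ\phi^{-1}$ with $u_{\KO}$. By Theorem~\ref{thm:stable_representability}, $u_{\KO}$ corresponds to $1\in\KO^{0,0}(\pt)$ under $\Theta$, and any morphism $\SSph\to\KO$ representing $1$ coincides with $u_{\KO}$ in $\SH(k)$. By Lemma~\ref{lm:stabilization}, the level-$1$ component $f_1\colon\HP^1\to\HGr$ represents the element $\langle\Hstruct(1)\rangle-\langle\Hplane_{-}\rangle=b_1^{\KO}(\Hstruct(1))\in\KO^{4,2}(\HP^1)$ by Theorem~\ref{thm:KO_orientation}. Under the suspension isomorphism $\Sigma_{\HP^1}^{-1}\colon\KO^{4,2}(\HP^1)\xrightarrow{\simeq}\KO^{0,0}(\pt)$, the class $b_1^{\KO}(\Hstruct(1))$ (the top summand in the projective bundle decomposition of Theorem~\ref{thm:proj_bundle_cohomology}) is identified with $1$, confirming that $u'_{\KO}\circ\phi^{-1}$ represents $1$. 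The main obstacle I anticipate is this last identification: it hinges on the normalization $b_1^{\KO}(\Hplane_{-})=0$ to force $b_1^{\KO}(\Hstruct(1))$ to be precisely $\Sigma_{\HP^1}(1)$ rather than some other generator, and requires tracking carefully how $\Theta$ interacts with the suspension isomorphism $\Sigma_{\HP^1}$ through the construction of the Borel class theory in Theorem~\ref{thm:KO_orientation}.
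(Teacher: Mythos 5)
Your reduction of the lemma to a single statement about $f_1$ is sound: since
$\Hom_{\SH(k)}(\Sigma^\infty_{\HP^1}\HP^1\{-1\},\KO)\cong\KO^{4,2}(\HProj^1,*)$, both $u_{\KO}\circ\phi$ and $u'_{\KO}$ are determined by one class, and by Lemma~\ref{lm:stabilization} the class of $u'_{\KO}$ is $\langle\Hstruct(1)\rangle-\langle\Hplane_{-}\rangle$, while the class of $u_{\KO}\circ\phi$ is $\Sigma_{\HP^1}1$. The checks that the $f_n$ are compatible with the structure maps are also fine (modulo the standard caveat that Theorem~\ref{thm:GW_unstable_repr} produces the $f_n$ only in $\Ho(k)$; this is harmless here because a map out of $\Sigma^\infty_{\HP^1}\HP^1\{-1\}$ in $\SH(k)$ is already determined by its restriction to $\Tr_1$). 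The genuine gap is the last step, the identity $\Sigma_{\HP^1}^{-1}\bigl(\langle\Hstruct(1)\rangle-\langle\Hplane_{-}\rangle\bigr)=1$. This is precisely Corollary~\ref{cor:normalization}(1), which in the paper is \emph{deduced from} the present lemma together with Lemma~\ref{lm:stabilization}; taking it as input makes your argument circular relative to the paper's logical order. Moreover, the justification you offer for it does not suffice: the axiom $b_1^{\KO}(\Hplane_{-})=0$ only guarantees that $b_1^{\KO}(\Hstruct(1))$ lies in the reduced group, and Theorem~\ref{thm:proj_bundle_cohomology} only gives that the reduced part of $\KO^{4,2}(\HProj^1)$ is the free rank-one module on $b_1^{\KO}(\Hstruct(1))$. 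Hence these facts pin down $\Sigma_{\HP^1}1$ only as $a\cdot b_1^{\KO}(\Hstruct(1))$ for \emph{some} unit $a\in\GW^{[0]}_0(k)$ (compare Lemma~\ref{lm:thom_trivial}); nothing in the axiomatics excludes $a=\langle-1\rangle$ or $a=-1$.

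The reason the paper can dispose of this lemma in one line ("follows immediately from the construction of $\Theta$") is that the statement is not a consequence of the formal properties of $\KO$ but a record of how the unit of the monoid $(\KO,m_{\KO},u_{\KO})$ is actually built in \cite{PW10c}: the isomorphism $\Theta$, the periodicity, and the unit are all constructed there out of the very classes $(\langle\Hstruct(1)\rangle-\langle\Hplane_{-}\rangle)^{\boxtimes n}$, so $u_{\KO}$ is levelwise the maps $f_n$ essentially by definition. Any proof must at some point open up that construction; your proposal defers exactly this input (you flag it yourself as "the main obstacle") and so does not close the argument.
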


\begin{corollary}\label{cor:normalization}
Let $\Hstruct(1)$ be the tautological rank $2$ symplectic bundle over $\HProj^1$. Then
\begin{enumerate}
\item
$\Sigma_{\HProj^1} 1= b_1^{\KO}(\Hstruct(1)) \in \KO^{4,2}(\HProj^1)$,
\item
$\Sigma_{\HProj^1} 1= \chi_1 \in \KO_{4,2}(\HProj^1)$.
\end{enumerate}
\end{corollary}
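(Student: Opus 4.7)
The plan for both parts is to trace $\Sigma_{\HProj^1} 1$ back to the unit $u_{\KO}$ via the canonical isomorphism $\HProj^1 \cong \Sph^{4,2}$ of Corollary~\ref{cor:HP1_sphere}, and then to compare with the characterizations of $b_1^{\KO}(\Hstruct(1))$ and $\chi_1$ recalled earlier in the paper.

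For part (1), I would start by unwinding the definition of $\Sigma_{\HProj^1}$: the class $\Sigma_{\HProj^1} 1 \in \KO^{4,2}(\HProj^1)$ is represented by the composite
\[
\Sigma^{\infty}_{\HP^1} \HProj^1 \xrightarrow{\simeq} \Sigma^{\infty}_{\HP^1} \HP^1\{-1\} \wedge \HP^1 \xrightarrow{u_{\KO}' \wedge \id_{\HP^1}} \KO \wedge \HP^1,
\]
where $u_{\KO}'$ is as in Lemma~\ref{lm:unit_KO}. Plugging this into the diagram of Lemma~\ref{lm:stabilization} with $X = \HProj^1$ and invoking the description of $f_1$ from Lemma~\ref{lm:unit_KO} would give $\Theta(\Sigma_{\HProj^1} 1) = \langle \Hstruct(1) \rangle - \langle \Hplane_{-} \rangle$, which by Theorem~\ref{thm:KO_orientation} equals $b_1^{\KO}(\Hstruct(1))$.

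For part (2), Theorem~\ref{thm:homological_projective_bundle} applied to the trivial symplectic bundle of rank $4$ over $\pt$ shows that $\KO_{4,2}(\HProj^1_+)$ is a free rank-two $\KO^{0,0}(\pt)$-module with basis characterized by the Kronecker pairing against $1$ and $\xi = b_1^{\KO}(\Hstruct(1))$. In particular $\chi_1$ is the unique class with $\langle \xi, \chi_1 \rangle = 1$ and $\langle 1, \chi_1 \rangle = 0$, so it suffices to verify the same for $\Sigma_{\HProj^1} 1$. The vanishing $\langle 1, \Sigma_{\HProj^1} 1 \rangle = 0$ is immediate because $\Sigma_{\HProj^1} 1$ lies in the reduced summand. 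Using part (1) to substitute $\xi = \Sigma_{\HProj^1} 1$, the remaining computation reduces to unwinding $\langle \Sigma_{\HProj^1} 1, \Sigma_{\HProj^1} 1 \rangle$ via Definition~\ref{def:cap}: both factors come from smashing $u_{\KO}$ with the identity of $\Sph^{4,2}$, and the pairing collapses to $m_{\KO} \circ (u_{\KO} \wedge u_{\KO}) = u_{\KO}$, which equals $1 \in \KO^{0,0}(\pt)$.

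The main subtlety will be in part (2): one must verify that the permutation of $\Sph^{4,2}$-factors occurring inside the Kronecker pairing introduces no sign. Since $\Sph^{4,2} \cong \T^{\wedge 2}$ has even simplicial weight, the permutation is genuinely $+1$ rather than the element $\varepsilon$ from Lemma~\ref{lm:epsilon_commutativity}, and the pairing evaluates to $1$ as claimed.
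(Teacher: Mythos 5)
Your proposal is correct and follows the paper's own (very terse) argument: part (1) is exactly the combination of Lemma~\ref{lm:stabilization} and Lemma~\ref{lm:unit_KO}, and part (2) is deduced from part (1) together with the computation $\langle \Sigma_{\HProj^1}1,\Sigma_{\HProj^1}1\rangle=1$ of the Kronecker pairing. Your extra care about the permutation sign and the dual-basis characterization of $\chi_1$ only makes explicit what the paper leaves implicit.
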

\begin{proof}
With our definition $ b_1^{\KO}(\Hstruct(1)) = \langle \Hstruct(1) \rangle - \langle  \Hplane_{-} \rangle $ the first claim is straightforward from the above two lemmas. The second claim follows from the first one since $\langle \Sigma^1_{\HProj^1} 1, \Sigma^1_{\HProj^1} 1\rangle =1$ for the Kronecker product.
\end{proof}

\begin{definition}
The cohomology theory $\KO^{*,*}(-)$ is $(8,4)$-periodic with the periodicity isomorphism induced by
\[
\KO\wedge \Sph^{8,4} \cong \KO\wedge (\HP^1)^{\wedge 2} \xrightarrow{\simeq} \KO\{2\}\cong \KO.
\]
Here the first isomorphism is given by Corollary~\ref{cor:HP1_sphere}, the second isomorphism is the canonical one identifying double $\HP^1$-suspension with shift by $2$ and the third isomorphism is given by identity map.

One may identify this periodicity isomorphisms with 
\[
\KO\wedge \Sph^{8,4} \xrightarrow{-\cup \Sigma^{8,4}\beta} \KO,
\]
where  $\beta\in \KO^{-8,-4}(\pt)$ is the element corresponding to $1\in \KO^{0,0}(\pt)$ under the categorical periodicity isomorphism
\[
\KO^{0,0}(\pt)\cong \GW_{0}^{[0]}(\pt) \cong \GW_{0}^{[-4]}(\pt)\cong \KO^{-8,-4}(\pt),
\]
i.e. $\beta$ is the unique element satisfying 
\[
\Sigma^2_{\HProj^1}\beta = (\langle \Hstruct(1) \rangle - \langle  \Hplane_{-} \rangle )\boxtimes (\langle \Hstruct(1) \rangle - \langle  \Hplane_{-} \rangle ) \in \KO^{0,0}(\HProj^1\wedge \HProj^1).
\]
We refer to $\beta$ as \textit{Bott element}.
\end{definition}

\begin{remark}
For a spectrum $\K$ representing algebraic $\K$-theory there exists a morphism $\KO\xrightarrow{F}\K$ that induces forgetful maps
\[
F\colon \GW^{[0]}_0(X)\cong \KO^{0,0}(X)\to \K^{0,0}(X) \cong \K_0(X).
\]
Recall that $\K$ is $(2,1)$-periodic with the periodicity realized by cup product with the element $\beta_\K\in\K^{-2,-1}(\pt)$ satisfying 
\[
\Sigma_{\mathbb{P}^1}\beta_\K = [\mathcal{O}(-1)]-1 \in \K^{0,0}(\mathbb{P}^1,\infty).
\]
One can show that $F(\beta)=\beta_\K^4$.
\end{remark}

\begin{remark}
Let $E_1,E_2$ be symplectic bundles over a smooth variety $X$. Then
\[
\beta\cup \langle  E_1 \rangle \cup \langle E_2 \rangle = \langle E_1\otimes E_2 \rangle.
\]
Here on the left side we consider $E_1,E_2$ as elements of $\KO^{4,2}(X)$ and on the right side we consider them as symplectic bundles, so $ E_1\otimes E_2$ is an orthogonal bundle which we treat as an element of $\KO^{0,0}(X)$.
\end{remark}

\section{Hopf element and $\KW$}

In this section we recall the definition of the Hopf element and identify $\KO[\eta^{-1}]$ as a spectrum representing derived Witt groups.

\begin{definition} \label{def_hopf}
The \textit{Hopf map} is the projection
\[
H\colon \A^2-\{0\} \to \mathbb{P}^1
\]
given by $H(x,y)=[x,y]$. Pointing $\A^2-\{0\}$ by $(1,1)$ and $\mathbb{P}^1$ by $[1:1]$ and taking the suspension spectra we obtain a morphism
\[
\Sigma^\infty_{\HP^1} H \in \Hom_{\SH (k)}(\Sigma^\infty_{\HP^1} (\A^2-\{0\},(1,1)),\Sigma^\infty_{\HP^1} (\mathbb{P}^1,[1:1])).
\]
The \textit{Hopf element} $\eta=\Sigma^{-3,-2} \Sigma^\infty_{\HP^1} H\in \mathbb{S}^{-1,-1}(\pt)$ is the element corresponding to $\Sigma^\infty_{\HP^1} H$ under the suspension isomorphism and canonical isomorphisms 
\[
(\mathbb{P}^1,[1:1])\cong \Sph^{2,1},\quad (\A^2-\{0\},(1,1))\cong \Sph^{3,2}
\]
given by \cite[Lemma 3.2.15, Corollary 3.2.18 Example 3.2.20]{MV99}.
\end{definition}

\begin{definition}
Denote
\begin{gather*}
\mathbb{S} [\eta^{-1}]=
\operatorname{hocolim} \left(\mathbb{S} \xrightarrow{ \cup \eta} \mathbb{S}\wedge \Sph^{-1,-1} \xrightarrow{\cup \eta} \mathbb{S}\wedge \Sph^{-2,-2} \xrightarrow{ \cup \eta} \dots \right),\\
\KW=\KO\wedge \mathbb{S}[\eta^{-1}].
\end{gather*}
This spectrum inherits the structure of a $(8,4)$-periodic symplectically oriented commutative ring spectrum from $\KO$.
\end{definition}

\begin{remark}
We clearly have 
\[
\KW^{*,*}(\KW)=\KW^{*,*}(\KO),\quad \KW_{*,*}(\KW)=\KW_{*,*}(\KO).
\]
\end{remark}

It is well-known that the spectrum $\KW$ represents derived Witt groups defined by Balmer \cite{Bal99} (see, for example, \cite[Theorem~6.5]{An12}).

\begin{theorem} \label{thm:KW_represents}
For every smooth variety $X$ there exists a functorial in $X$ isomorphism $\Theta_\W\colon \KW^{i,j}(X)\xrightarrow{\simeq} \W^{[i-j]}(X)$ such that the square
\[
\xymatrix{
\KO^{2n,n}(X) \ar[r]^{\Theta}_{\simeq} \ar[d] & \GW^{[n]}_0(X) \ar[d] \\
\KW^{2n,n}(X) \ar[r]^{\Theta_\W}_{\simeq} & \W^{[n]}(X) 
}
\]
commutes for all $n$. Here the left vertical morphism is the canonical one arising from localization and the right vertical morphism is given by killing metabolic elements.
\end{theorem}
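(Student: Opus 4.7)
The plan is to unpack $\KW^{i,j}(X)$ using the explicit homotopy colimit defining $\mathbb{S}[\eta^{-1}]$. Since $\Sigma^\infty_{\HP^1} X_+$ is a compact object of $\SH(k)$, mapping out of $X$ commutes with the filtered colimit, yielding
\[
\KW^{i,j}(X) \cong \varinjlim_n \KO^{i-n,j-n}(X),
\]
with transition maps cup product with $\eta \in \mathbb{S}^{-1,-1}(\pt)$.

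First I would apply the isomorphism $\Theta$ from Theorem~\ref{thm:stable_representability} to each term, obtaining
\[
\KO^{i-n,j-n}(X) \cong \GW^{[j-n]}_{2(j-n)-(i-n)}(X) = \GW^{[j-n]}_{2j-n-i}(X).
\]
Once $n > 2j-i$ the lower index is negative, so by \cite[Proposition~6.3]{Sch12} each such term is canonically isomorphic to $\W^{[(j-n)-(2j-n-i)]}(X) = \W^{[i-j]}(X)$, independent of $n$ (consistent with the $4$-periodicity of Balmer's theory). From some point on the directed system is therefore a chain of homomorphisms $\W^{[i-j]}(X) \to \W^{[i-j]}(X)$, and the remaining task is to show these transition maps are identities.

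This last verification is the main obstacle and is precisely \cite[Theorem~6.5]{An12}: once we have descended to Balmer's derived Witt groups, cup product with $\eta$ corresponds to the Schlichting--Walter/Karoubi periodicity isomorphism and acts as the identity. Rather than reproducing the argument I would invoke the cited result. Granting it, the colimit collapses to $\W^{[i-j]}(X)$, and defining $\Theta_\W$ as the resulting stabilized isomorphism produces a functorial bijection, every step of the construction being natural in~$X$.

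Finally, for the commutative square: by construction of the $\eta$-localization the left vertical morphism $\KO^{2n,n}(X) \to \KW^{2n,n}(X)$ is the inclusion of the initial term into the colimit. Under $\Theta$ it corresponds to the canonical composite $\GW^{[n]}_0(X) \to \GW^{[n-1]}_{-1}(X) \cong \W^{[n]}(X)$ (after enough suspensions to enter the negative range), the first arrow being the Karoubi connecting/forgetful map and the second the Schlichting--Walter identification. Standard descriptions of these maps show that this composite sends a symmetric (resp.\ symplectic) bundle to the class of the corresponding symmetric complex modulo metabolic objects, which is exactly the right vertical arrow in the square.
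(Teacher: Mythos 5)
The paper does not actually prove this statement: it records it as well known and refers to \cite[Theorem~6.5]{An12}, so there is no internal proof to compare against. Your outline is the standard argument underlying that citation and is essentially correct: compactness of $\Sigma^\infty_{\HP^1}X_+$ identifies $\KW^{i,j}(X)$ with $\varinjlim_n \KO^{i-n,j-n}(X)$ with transition maps $\cup\,\eta$, the index computation $\GW^{[j-n]}_{2j-n-i}(X)\cong \W^{[i-j]}(X)$ for $n>2j-i$ is right, and the entire weight of the proof rests on knowing how $\cup\,\eta$ transforms under $\Theta$ --- namely that it is carried to the Karoubi/Schlichting periodicity maps, so that in the negative range it becomes an isomorphism of derived Witt groups and on $\GW^{[n]}_0$ the resulting composite $\GW^{[n]}_0\to\GW^{[n-1]}_{-1}\cong\W^{[n]}$ is the projection killing metabolic elements. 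That identification is exactly the content of \cite[Theorem~6.5]{An12}, so your deferral is the same deferral the paper makes. Two small caveats: the stabilized transition maps need only be isomorphisms compatible with a fixed choice of the identifications $\GW^{[m]}_{i}\cong\W^{[m-i]}$ for $i<0$, not literal identities, so $\Theta_\W$ should be defined via a fixed cofinal stage of the colimit rather than by asserting the system is constant; and the final paragraph's description of the composite is precisely the nontrivial compatibility that must be imported from the reference, not a consequence of the colimit formalism itself.
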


\begin{remark}
Having the above theorem in mind we identify $\KW^{0,0}(X)$ and $\KW^{4,2}(X)$ with $\W^{[0]}(X)$ and $\W^{[2]}(X)$ respectively. In particular, we have $b_1^{\KW}(E)=[E]\in \KW^{4,2}(X)$ for a rank $2$ symplectic bundle $E$ over $X$.
\end{remark}

\section{Borel classes of triple tensor product in $\KW$}

In this section in Lemma~\ref{lm:formal_group_law} we compute characteristic classes of a triple tensor product of rank $2$ symplectic bundles. This computation is a derived Witt analogue of equality
\[
c_1^\K(L_1\otimes L_2)=c_1^\K(L_1)+c_1^\K(L_2)- c_1^\K(L_1)c_1^\K(L_2)
\]
in $\K$-theory, where $L_i$ are line bundles and $c_1^\K(L_i)=1-[L_i^\vee]$ is the first Chern class in $\K$-theory. As an intermediate step we show how to express Borel classes in derived Witt groups using external powers. 

\begin{lemma} \label{lm:Borel_via_lambda}
Let $E$ be a symplectic bundle of rank $8$ over a smooth variety $X$. Then
\[
\begin{aligned}
&b_1^{\KW}(E)=[E],\\
&\beta b_2^{\KW}(E)= [\Lambda^2 E]  -  4,\\
\end{aligned}
\quad
\begin{aligned}
&\beta b_3^{\KW}(E)= [\Lambda^3 E]  -3 [E] ,\\
&\beta^2 b_4^{\KW}(E)= [\Lambda^4 E]  -2[\Lambda^2 E]  + 2 .
\end{aligned}
\]
\end{lemma}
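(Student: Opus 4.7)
The plan is to reduce to the split case and then carry out a direct computation of the exterior powers.

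First I would apply Theorem~\ref{thm:splitting} to choose a morphism $f\colon Y\to X$ with $f^{\KW}$ injective such that $f^*E\cong E_1\oplus E_2\oplus E_3\oplus E_4$ for rank $2$ symplectic bundles $E_i$. Since all four identities to prove are naturally pulled back and exterior powers commute with pullback, it suffices to check them on $Y$; I will abuse notation and write $E=E_1\oplus\dots\oplus E_4$. Setting $\xi_i=b_1^{\KW}(E_i)=[E_i]\in\KW^{4,2}$, we have $b_k^{\KW}(E)=\sigma_k(\xi_1,\xi_2,\xi_3,\xi_4)$.

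The key algebraic input is the remark at the end of Section~6: for rank $2$ symplectic bundles $F_1,F_2$, $\beta\cdot[F_1]\cdot[F_2]=[F_1\otimes F_2]\in\KW^{0,0}$. Iterating, for any $m$ and indices $i_1,\dots,i_m$,
\[
\beta^{\lfloor m/2\rfloor}\xi_{i_1}\cdots\xi_{i_m}=[E_{i_1}\otimes\cdots\otimes E_{i_m}],
\]
living in $\KW^{0,0}$ or $\KW^{4,2}$ according to the parity of $m$. This is the bridge between the elementary symmetric polynomials in the $\xi_i$ and classes of tensor products of the $E_i$.

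Next I would expand each $\Lambda^j E$ via
\[
\Lambda^j(E_1\oplus\dots\oplus E_4)=\bigoplus_{\substack{a_1+a_2+a_3+a_4=j\\ 0\le a_i\le 2}}\Lambda^{a_1}E_1\otimes\Lambda^{a_2}E_2\otimes\Lambda^{a_3}E_3\otimes\Lambda^{a_4}E_4,
\]
using that for each rank $2$ symplectic $E_i$ one has $\Lambda^0E_i=\mathcal{O}$, $\Lambda^1E_i=E_i$, and $\Lambda^2E_i=\det E_i\cong\mathcal{O}$ (the symplectic form provides the canonical trivialisation). Enumerating the compositions:
\begin{align*}
[\Lambda^2 E]&=4+\sum_{i<j}[E_i\otimes E_j],\\
[\Lambda^3 E]&=3[E]+\sum_{i<j<k}[E_i\otimes E_j\otimes E_k],\\
[\Lambda^4 E]&=6+2\sum_{i<j}[E_i\otimes E_j]+[E_1\otimes E_2\otimes E_3\otimes E_4].
\end{align*}
Using the bridge formula, the right-hand sides become $4+\beta\sigma_2$, $3[E]+\beta\sigma_3$, and $6+2\beta\sigma_2+\beta^2\sigma_4$ respectively, which upon rearrangement (and substituting $\beta b_2^{\KW}(E)=\beta\sigma_2=[\Lambda^2E]-4$ into the $\Lambda^4$ identity) yield the four claimed formulas.

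The only nontrivial point is the combinatorial bookkeeping for $\Lambda^4$: the coefficient $6$ comes from the $\binom{4}{2}$ arrangements of type $(2,2,0,0)$, the coefficient $2$ comes from the $12$ arrangements of type $(2,1,1,0)$ distributing as $2$ per unordered pair $\{i,j\}$, and the $(1,1,1,1)$ arrangement produces the single top summand. Once this is done carefully the identities follow immediately.
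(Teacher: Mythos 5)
Your proposal is correct and follows essentially the same route as the paper: reduce to the split case $E=E_1\oplus E_2\oplus E_3\oplus E_4$ via Theorem~\ref{thm:splitting}, expand $\Lambda^j$ of the direct sum using the symplectic trivializations $\Lambda^2E_i\cong\triv_X$, and convert between elementary symmetric polynomials in the $\xi_i$ and classes of tensor products via the Bott-element identity. Your combinatorial counts ($4$, $3\sigma_1$, $6+2\sigma_2$) match the paper's, so nothing further is needed.
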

\begin{proof}
Using Theorem~\ref{thm:splitting} we may assume that $E=E_1\oplus E_2\oplus E_3\oplus E_4$ for rank $2$ symplectic bundles $E_i$.  Then $\beta^{\left[ \tfrac{n}{2}\right]}b_n^{\KW}(E)=\sigma_n(E_1,E_2,E_3,E_4)$. 

Expanding 
\[
\Lambda^j(E_1\oplus E_2\oplus E_3\oplus E_4)=\bigoplus_{i_1+i_2+i_3+i_4=j} \Lambda^{i_1}E_1\otimes \Lambda^{i_2}E_2\otimes \Lambda^{i_3}E_3\otimes \Lambda^{i_4}E_4
\]
and using the given trivializations $\Lambda^2E_i=\triv_X$ we obtain
\[
\begin{aligned}
&\Lambda^1 E=\sigma_1(E_1,E_2,E_3,E_4),\quad \Lambda^2 E=\sigma_2(E_1,E_2,E_3,E_4)+4,\\
&\Lambda^3 E=\sigma_3(E_1,E_2,E_3,E_4) + 3 \sigma_1(E_1,E_2,E_3,E_4),\\
&\Lambda^4 E= \sigma_4(E_1,E_2,E_3,E_4) + 2 \sigma_2(E_1,E_2,E_3,E_4) +6.
\end{aligned}
\]
The claim follows.
\end{proof}

\begin{lemma}\label{lm:formal_group_law}
Let $E_1,E_2$ and $E_3$ be rank $2$ symplectic bundles over a smooth variety $X$. Put $\xi_i = b_1^{\KW}(E_i)\in \KW^{4,2}(X)$ and denote $\xi(n_1,n_2,n_3)$ the sum of all the monomials lying in the orbit of $\xi_1^{n_1}\xi_2^{n_2}\xi_3^{n_3}$ under the action of $S_3$. Then
\begin{align*}
& b^{\KW}_1(E_1\otimes E_2 \otimes E_3)=\beta \xi(1,1,1),\\
& b^{\KW}_2(E_1\otimes E_2 \otimes E_3)=\beta\xi(2,2,0)-2\xi(2,0,0),\\
& b^{\KW}_3(E_1\otimes E_2 \otimes E_3)=\beta\xi(3,1,1) -8\xi(1,1,1),\\
& b^{\KW}_4(E_1\otimes E_2 \otimes E_3)=\beta\xi(2,2,2)+ \xi(4,0,0)- 2\xi(2,2,0).
\end{align*}
\end{lemma}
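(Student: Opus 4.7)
The plan is to apply Lemma~\ref{lm:Borel_via_lambda} to reduce the four identities to computing the $\KW$-classes of the exterior powers $\Lambda^j(E_1\otimes E_2\otimes E_3)$ for $j=1,2,3,4$. These classes will in turn be obtained by decomposing each $\Lambda^j$ into Schur functors of the individual bundles $E_i$ and then evaluating each summand using the multiplicative identity $\beta\langle E\rangle\langle F\rangle=\langle E\otimes F\rangle$ from the end of Section~7 (which inductively gives $[E_{i_1}\otimes\cdots\otimes E_{i_k}]=\beta^{\lfloor k/2\rfloor}\xi_{i_1}\cdots\xi_{i_k}$), together with the triviality $\Lambda^2 E_i\cong\triv$.

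For $j=1$ the assertion $b_1^{\KW}(E_1\otimes E_2\otimes E_3)=[E_1\otimes E_2\otimes E_3]=\beta\xi(1,1,1)$ is immediate. For $j\ge 2$ I would group the triple product as $(E_1\otimes E_2)\otimes E_3$ and apply the Cauchy identity $\Lambda^j(V\otimes W)=\bigoplus_{\lambda\vdash j}S^\lambda V\otimes S^{\lambda'}W$. Specializing to the rank-two symplectic $W=E_3$ with $\Lambda^2 E_3\cong\triv$, only partitions $\lambda$ with parts $\le 2$ contribute, yielding
\[
\Lambda^j\bigl((E_1\otimes E_2)\otimes E_3\bigr)=\bigoplus_{2a+b=j}S^{(2^a,1^b)}(E_1\otimes E_2)\otimes S^b E_3.
\]
For $j=2$ this reads $S^2E_2\otimes S^2E_3\oplus S^2E_1\otimes S^2E_3\oplus S^2E_1\otimes S^2E_2\oplus\triv$, and combined with $[S^2 E_i]=\beta\xi_i^2-1$ it gives $[\Lambda^2 E]=\beta^2\xi(2,2,0)-2\beta\xi(2,0,0)+4$, from which the formula for $b_2^{\KW}$ follows after substitution into Lemma~\ref{lm:Borel_via_lambda}.

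For $j=3,4$ one needs in addition the classes of the Schur functors $S^{(2,1)}V$, $S^{(2,1,1)}V$, $S^{(2,2)}V$ of the rank-four orthogonal bundle $V=E_1\otimes E_2$, together with the symmetric powers $S^m E_3$. The Pieri rule expresses each of the former in terms of exterior powers of $V$, for instance $[S^{(2,1)}V]=[V][\Lambda^2V]-[\Lambda^3V]$, $[S^{(2,1,1)}V]=[V][\Lambda^3V]-[\Lambda^4V]$, $[S^{(2,2)}V]=[\Lambda^2V]^2-[S^{(2,1,1)}V]-[\Lambda^4V]$; and the exterior powers of $V=E_1\otimes E_2$ are computed directly via $\Lambda^2V\cong S^2 E_1\oplus S^2 E_2$ (using $\Lambda^2 E_i\cong\triv$), $\Lambda^3V\cong E_1\otimes E_2$, $\Lambda^4V\cong\triv$. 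The symmetric powers $[S^m E_i]$ follow from the $SL_2$ Clebsch-Gordan relation $S^m E_i\otimes E_i\cong S^{m+1}E_i\oplus S^{m-1}E_i$, yielding Chebyshev-like expressions $[S^3 E_i]=\beta\xi_i^3-2\xi_i$ and $[S^4 E_i]=\beta^2\xi_i^4-3\beta\xi_i^2+1$. Substituting everything into Lemma~\ref{lm:Borel_via_lambda} and collecting terms produces the remaining three formulas.

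The main obstacle is arithmetic bookkeeping: the asymmetric grouping $(E_1\otimes E_2)\otimes E_3$ produces expressions that are only manifestly symmetric in $\xi_1,\xi_2$, and one must verify that the various cross terms cancel to leave only $S_3$-symmetric orbit sums $\xi(n_1,n_2,n_3)$. A convenient sanity check is provided by Theorem~\ref{thm:splitting}, which furnishes a splitting $E_1\otimes E_2\otimes E_3\simeq F_1\oplus\cdots\oplus F_4$ into rank-two symplectic summands and hence realizes $b_j^{\KW}(E_1\otimes E_2\otimes E_3)$ as the $j$-th elementary symmetric polynomial in the $b_1^{\KW}(F_i)$; this guarantees a priori that the final answer is symmetric in $\xi_1,\xi_2,\xi_3$ of the expected total degree.
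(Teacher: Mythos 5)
Your reduction is the same as the paper's: both proofs funnel everything through Lemma~\ref{lm:Borel_via_lambda}, so the whole content is the computation of $[\Lambda^j(E_1\otimes E_2\otimes E_3)]$ for $j\le 4$. Where you differ is in how that computation is organized. The paper works $S_3$-symmetrically in the character ring $\mathrm{Rep}(\Sp_2\times\Sp_2\times\Sp_2)\cong\Z[\chi_1^{\pm1},\chi_2^{\pm1},\chi_3^{\pm1}]^{(\Z/2)^3}$, expanding $\Lambda^j(e_1e_2e_3)$ with $e_i=\chi_i+\chi_i^{-1}$ directly via the axioms of the $\lambda$-operations; the symmetry in the three factors is then visible at every stage and no cancellation of cross terms has to be checked. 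You instead group as $(E_1\otimes E_2)\otimes E_3$ and run the Cauchy identity, Pieri rules for the Schur functors of the rank-$4$ factor, and Clebsch--Gordan for the symmetric powers of $E_3$; this is a legitimate alternative decomposition (your $j=2$ case checks out: $[\Lambda^2E]=\beta^2\xi(2,2,0)-2\beta\xi(2,0,0)+4$ agrees with the paper's character expansion, and your values $[S^3E_i]=\beta\xi_i^3-2\xi_i$, $[S^4E_i]=\beta^2\xi_i^4-3\beta\xi_i^2+1$ are correct), but it trades the paper's manifest symmetry for heavier bookkeeping, and you only actually carry the computation through for $j=2$, leaving $j=3,4$ as ``substitute and collect.'' Your closing observation that Theorem~\ref{thm:splitting} guarantees the answer is an $S_3$-symmetric polynomial of the right degree is a useful sanity check, though note it only constrains the shape of the answer, not the coefficients. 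Either route is fine; the paper's symmetric character computation is simply the lower-risk way to get the $j=3,4$ coefficients right.
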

\begin{proof}
Consider the representation ring
\begin{multline*}
\mathrm{Rep}(\Sp_2\times \Sp_2\times \Sp_2)\cong \Z[\chi_1^{\pm 1},\chi_2^{\pm 1},\chi_3^{\pm 1}]^{\Z/2\times \Z/2 \times \Z/2}=\\
=\Z[\chi_1+\chi_1^{-1},\chi_2+\chi_2^{-1},\chi_3+\chi_3^{-1}]
\end{multline*}
with the action of the $i$-th copy of $\Z/2$ given by $\chi_i\leftrightarrow \chi_i^{-1}$. The exterior powers of representations give rise to the operations 
\[
\Lambda^m\colon \Z[\chi_1+\chi_1^{-1},\chi_2+\chi_2^{-1},\chi_3+\chi_3^{-1}] \to \Z[\chi_1+\chi_1^{-1},\chi_2+\chi_2^{-1},\chi_3+\chi_3^{-1}], m\in \mathbb{N}_0,
\]
that are compatible with the operations 
\[
\Lambda^m\colon \Z[\chi_1^{\pm 1},\chi_2^{\pm 1},\chi_3^{\pm 1}]\to \Z[\chi_1^{\pm 1},\chi_2^{\pm 1},\chi_3^{\pm 1}],\quad m\in\mathbb{N}_0,
\]
characterized by the following properties:
\begin{enumerate}
\item
$\Lambda^m(0)=0$,
\item
$\Lambda^m(\chi_1^{i_1}\chi_2^{i_2}\chi_3^{i_3})=\left[\begin{array}{ll} 1, & m=0,\\ \chi_1^{i_1}\chi_2^{i_2}\chi_3^{i_3}, & m=1,\\ 0, & \text{otherwise,} \end{array}\right.$
\item
$\Lambda^m(f+g)=\bigoplus\limits_{m_1+m_2=m}(\Lambda^{m_1}f)(\Lambda^{m_2}g)$.
\end{enumerate} 
Set $e_i=\chi_i+\chi_i^{-1}$. A straightforward computation in $\Z[\chi_1^{\pm 1},\chi_2^{\pm 1},\chi_3^{\pm 1}]$ shows that
\begin{align*}
&\Lambda^1 (e_1e_2e_3) =   e_1 e_2 e_3,\\
&\Lambda^2 (e_1e_2e_3) =   e_1^2 e_2^2 + e_1^2 e_3^2 + e_2^2  e_3^2  - 2 (e_1^2+e_2^2+e_3^2)+4,\\
&\Lambda^3 (e_1e_2e_3) =   e_1^3e_2e_3 + e_1e_2^3e_3+  e_1e_2e_3^3  - 5 e_1e_2e_3, \\
&\Lambda^4 (e_1e_2e_3) =   e_1^4+ e_2^4 + e_3 ^4 + e_1^2e_2^2e_3^2 - 4 (e_1^2+e_2^2 + e_3^2) +6.
\end{align*}

Thus
\begin{alignat*}{3}
&\Lambda^1 (E_1\otimes E_2\otimes E_3) &&=   E_1  \otimes E_2 \otimes E_3,\\
&\Lambda^2 (E_1\otimes E_2\otimes E_3) &&=   E_1^2 \otimes E_2^2 + E_1^2 \otimes  E_3^2 + E_2^2 \otimes  E_3^2  - 2 (E_1^2+E_2^2+E_3^2)+4,\\
&\Lambda^3 (E_1\otimes E_2\otimes E_3) &&=   E_1^3\otimes E_2\otimes E_3 + E_1\otimes E_2^3 \otimes E_3+  E_1\otimes E_2\otimes E_3^3 - \\
& & &  - 5 E_1\otimes E_2\otimes E_3,  \\
&\Lambda^4 (E_1\otimes E_2\otimes E_3) &&=   E_1^4+ E_2^4 + E_3 ^4 + E_1^2\otimes E_2^2 \otimes E_3^2 - 4 (E_1^2+E_2^2 + E_3^2) +6.
\end{alignat*}

The claim of the lemma follows by Lemma~\ref{lm:Borel_via_lambda}.
\end{proof}


\section{Stable operations in $\KW_\Q$}

In this section we compute the algebra of stable operations in $\KW_\Q$, i.e. $\KW_\Q^{*,*}(\KW_\Q)$. The computation is straightforward and based on Lemma~\ref{lm:truncated_limit} combined with Theorem~\ref{thm:cohomological_HGr}.

\begin{lemma}\label{lm:main_computation}
Let $B\in \Hom_{\Ho(k)}(\HProj^1\wedge \HProj^1 \wedge \HGr, \HGr)$ be the morphism
characterized by the property 
\[
B^{\GW}(\tau^s)=(\langle \Hstruct(1) \rangle - \langle \Hplane_{-} \rangle) \boxtimes (\langle \Hstruct(1) \rangle - \langle \Hplane_{-} \rangle)\boxtimes \tau^s.
\]
Then
\[
B^{\KW}(s_i^{\KW}(\tau^s))= [\Hstruct(1) \boxtimes \Hstruct(1)] \cup  ( a_i s^{\KW}_i(\tau^s) + c_i s^{\KW}_{i-2}(\tau^s) )
\]
for
\[
a_{2j+1}=(2j+1)^2,\quad c_{2j+1}=-\beta^{-1}8j(2j+1),\quad a_{2j}=c_{2j}=0.
\]
\end{lemma}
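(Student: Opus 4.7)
The plan is to pull back to a cover of $\HGr$ on which $\tau^s$ splits into rank $2$ symplectic pieces, expand $B^{\GW}(\tau^s)$ as an alternating sum of four triple tensor products, and then extract every $s_i^{\KW}$ at once from the generating function $s_t=-t\,d\ln b_{-t}/dt$ applied to the Borel classes from Lemma~\ref{lm:formal_group_law}.

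By Theorem~\ref{thm:splitting} there is a morphism $f\colon Y\to \HGr$ inducing an injection on $\KW^{*,*}$ such that $\tau^s|_Y\cong \bigoplus_j E_j$ with $E_j$ rank $2$ symplectic and Borel roots $\xi_j=b_1^{\KW}(E_j)$. The projective bundle formula applied twice gives $\KW^{*,*}(\HProj^1\wedge \HProj^1\wedge Z)\cong \eta_1\eta_2\cdot \KW^{*,*}(Z)$, so the induced morphism $\HProj^1\wedge \HProj^1\wedge Y\to \HProj^1\wedge \HProj^1\wedge \HGr$ is also $\KW^{*,*}$-injective, and it suffices to verify the identity after this pullback. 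Expanding bilinearly in the three $\boxtimes$-factors,
\begin{multline*}
B^{\GW}(\tau^s)|_Y=\sum_j\bigl(\Hstruct(1)_1\boxtimes \Hstruct(1)_2\boxtimes E_j-\Hstruct(1)_1\boxtimes \Hplane_{-}\boxtimes E_j\\
-\Hplane_{-}\boxtimes \Hstruct(1)_2\boxtimes E_j+\Hplane_{-}\boxtimes \Hplane_{-}\boxtimes E_j\bigr),
\end{multline*}
so by additivity of $s_i^{\KW}$ on direct sums it is enough to compute $s_i^{\KW}$ on each rank $8$ symplectic summand.

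Set $\eta_i=b_1^{\KW}(\Hstruct(1)_i)\in \KW^{4,2}(\HProj^1)$; Theorem~\ref{thm:cohomological_HGr_finite} applied to $\HProj^1=\HGr(2,4)$ gives $\eta_i^2=0$. Combined with $b_1^{\KW}(\Hplane_{-})=0$, Lemma~\ref{lm:formal_group_law} pins down the Borel classes of each summand. For the first summand one finds $b_1=\beta\eta_1\eta_2\xi_j$, $b_2=-2\xi_j^2$, $b_3=\beta\eta_1\eta_2\xi_j^3-8\eta_1\eta_2\xi_j$, $b_4=\xi_j^4$; each of the remaining three (involving at least one $\Hplane_{-}$) has $b_1=b_3=0$, $b_2=-2\xi_j^2$, $b_4=\xi_j^4$.

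To extract every $s_i$ at once, write $b_{-t}=A(t)+B(t)\eta_1\eta_2$ and use $(\eta_1\eta_2)^2=0$ to obtain $\ln b_{-t}=\ln A+(B/A)\eta_1\eta_2$, so the $\ln A$ piece produces the even power sums and the $(B/A)\eta_1\eta_2$ piece produces the odd ones. The common factor $A(t)=(1-\xi_j^2 t^2)^2$ forces every even power sum to equal $4\xi_j^{2n}$, so these cancel under the signs $(+,-,-,+)$; only the first summand contributes to the odd sums, producing the coefficient $(2n+1)^2\beta\xi_j^{2n+1}-8n(2n+1)\xi_j^{2n-1}$ of $t^{2n+1}$. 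Summing over $j$ and factoring $\beta\eta_1\eta_2=[\Hstruct(1)\boxtimes \Hstruct(1)]$ out front yields the claimed formula. The main obstacle is obtaining a closed-form answer valid for every $i$ simultaneously; the logarithmic-derivative trick works precisely because the nilpotency $\eta_i^2=0$ trims $\ln b_{-t}$ to a two-term expression, converting an infinite family of Newton identities into a single power-series extraction.
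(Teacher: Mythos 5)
Your proof is correct and follows essentially the same route as the paper's: reduce by naturality, additivity of $s_i^{\KW}$ and the splitting principle to rank‑$2$ factors, feed the Borel classes from Lemma~\ref{lm:formal_group_law} into $s_t=-t\,\tfrac{d}{dt}\ln b_{-t}$, and exploit $b_1^{\KW}(\Hstruct(1))^2=0$ on $\HProj^1$ so that only the $\eta_1\eta_2$-part of the logarithm survives the alternating sum. The paper organizes the cancellation as an eight-term inclusion–exclusion inside a single generating function rather than computing the four rank-$8$ summands separately, but the computation and the resulting coefficients are identical.
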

\begin{proof}
By Remark~\ref{rem:classes_as_operations} we may interpret $s_i^{\KW}$ as a natural transformation $\GW^{[2]}_0\to \KW^{4n,2n}$, whence
\[
B^{\KW}(s_i^{\KW}(\tau^s))=s_i^{\KW}(B^{\GW}(\tau^s)).
\]
Thus we need to compute $s_i^{\KW}((\langle \Hstruct(1) \rangle - \langle \Hplane_{-} \rangle)\boxtimes (\langle \Hstruct(1) \rangle - \langle \Hplane_{-} \rangle)\boxtimes \tau^s)$. The classes $s_i^{\KW}$ are additive and $s_i^{\KW}(\langle \Hplane_{-} \rangle )=0$, so it is sufficient to show that
\begin{multline*}
s_i^{\KW}((\langle \Hstruct(1) \rangle - \langle \Hplane_{-} \rangle)\boxtimes (\langle \Hstruct(1) \rangle - \langle \Hplane_{-} \rangle)\boxtimes  (\langle \Hstruct(1) \rangle - \langle \Hplane_{-} \rangle))
=\\
=
[\Hstruct(1)\boxtimes \Hstruct(1)]\cup  ( a_i s^{\KW}_i(\langle \Hstruct(1) \rangle ) + c_i s^{\KW}_{i-2}(\langle \Hstruct(1) \rangle) )
\end{multline*}
for 
\begin{multline*}
(\langle \Hstruct(1) \rangle - \langle \Hplane_{-} \rangle)\boxtimes  (\langle \Hstruct(1) \rangle - \langle \Hplane_{-} \rangle) \boxtimes  (\langle \Hstruct(1) \rangle - \langle \Hplane_{-} \rangle)\in \\ \in \GW^{[2]}_0(\HProj^1\wedge \HProj^1\wedge \HProj^\infty).
\end{multline*}
Denote
\begin{gather*}
x= b_1^{\KW}(\Hstruct(1)\boxtimes \triv\boxtimes\triv), \, y= b_1^{\KW}(\triv\boxtimes \Hstruct(1)\boxtimes\triv),\, \xi= b_1^{\KW}(\triv\boxtimes \triv\boxtimes \Hstruct(1)),\\
b_t(x,y,\xi)=b^{\KW}_t(\Hstruct(1)\boxtimes \Hstruct(1)\boxtimes \Hstruct(1)),\\ 
s_t(x,y,\xi)=s^{\KW}_t(\Hstruct(1)\boxtimes \Hstruct(1)\boxtimes \Hstruct(1)).
\end{gather*}
In this notation the claim is equivalent to
\begin{multline*}
s_t(x,y,\xi)-s_t(0,y,\xi)-s_t(x,0,\xi)-s_t(x,y,0)+\\
+s_t(0,0,\xi)+s_t(0,y,0)+s_t(x,0,0)-s_t(0,0,0)=\\
= \beta  x y \sum_{i\ge 1} ( a_i \xi^i + c_i \xi^{i-2} ) t^i.
\end{multline*}
The main summand on the left side is $s_t(x,y,\xi)$ and the other summands just cancel from $s_t(x,y,\xi)$ all the terms that do not contain $xy\xi$. Since $x^2=y^2=0$ Lemma~\ref{lm:formal_group_law} yields
\[
b_t(x,y,\xi)=1+\beta xy\xi t -2 \xi^2t^2 + (\beta xy\xi^3-8xy\xi)t^3+ \xi^4t^4.
\]
Thus
\begin{multline*}
s_{t}(x,y,\xi)=-t \frac{d}{dt}\ln b_{-t}(x,y,\xi)=\\
=-t\frac{\tfrac{d}{dt}\left( (1 - \xi^2t^2)^2 - xy\xi (\beta t+(\beta \xi^2-8)t^3)\right)}{(1 - \xi^2t^2)^2 - xy\xi (\beta t+(\beta \xi^2-8)t^3)} .
\end{multline*}
Put
\[
A(\xi,t)=(1 - \xi^2t^2)^2,\quad B(\xi,t)=\xi (\beta t+(\beta \xi^2-8)t^3).
\]
Recall that $x^2=y^2=0$ whence $(xy)^2=0$ and 

\begin{multline*}
s_{t}(x,y,\xi)=-t\frac{\tfrac{d}{dt} (A(\xi,t)-xyB(\xi,t))}{A(\xi,t)-xyB(\xi,t)}=\\
= -t\frac{\left(\tfrac{d}{dt} (A(\xi,t)-xyB(\xi,t))\right) \left(A(\xi,t)+xyB(\xi,t)\right)}{A(\xi,t)^2}.
\end{multline*}

Expanding the numerator, applying $x^2y^2=0$ and omitting all the terms that do not contain $xy\xi$ we obtain
\begin{multline*}
s_{t}(x,y,\xi)=-t\frac{\left(\tfrac{d}{dt} A(\xi,t)\right)xyB(\xi,t) - A(\xi,t)\tfrac{d}{dt} \left( xyB(\xi,t)\right)}{A(\xi,t)^2}=\\
=xyt\frac{d}{dt}\left( \frac{B(\xi,t)}{A(\xi,t)} \right)
=\beta xy t\frac{d}{dt}\left( \frac{\xi  t + (\xi^3-8\beta^{-1} \xi) t^3}{(1-\xi^2t^2)^2} \right) =\\
= \beta xy t \frac{d}{dt} \left((\xi  t + (\xi^3-8\beta^{-1} \xi) t^3) (\sum_{j\ge 0} (j+1)\xi^{2j}t^{2j} )\right) =\\
= \beta xy t \frac{d}{dt} \left(\sum_{j\ge 0} ((2j+1)\xi^{2j+1}-8\beta^{-1} j\xi^{2j-1})t^{2j+1} \right)=\\
= \beta xy \sum_{j\ge 0} ((2j+1)^2\xi^{2j+1}-8\beta^{-1} j(2j+1)\xi^{2j-1})t^{2j+1}.
\end{multline*}
\end{proof}

\begin{lemma} \label{lm:coh_shift_by_beta}
The following diagram commutes.
\[
\xymatrix{
\KW_\Q^{*,*}(\KO) \ar[d]^{R } \ar[r]^(0.65){} & \varprojlim \KW_\Q^{*+8n+4,*+4n+2}(\HGr)\ar[d]^T \\
\KW_\Q^{*+8,*+4}(\KO) \ar[r]^(0.65){}  & \varprojlim \KW_\Q^{*+8(n+1)+4,*+4(n+1)+2}(\HGr) 
}
\]
Here the horizontal homomorphisms are the canonical ones given by Lemma~\ref{lm:truncated_limit}, $T$ is induced by the shift
\begin{align*}
\prod_{n\ge 0} {\KW_\Q}^{*+8n+4,*+4n+2}(\HGr) & \to\prod_{n\ge 0} {\KW_\Q}^{*+8(n+1)+4,*+4(n+1)+2}(\HGr)\\
(t_1,t_3,t_5,\dots) &\mapsto (t_3,t_5,\dots)
\end{align*}
and $R$ is given by $R(\gamma)=(\Sigma^{8,4} \gamma ) \circ (-\cup \beta^{-1})$.
\end{lemma}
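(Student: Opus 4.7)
The plan is to unravel Lemma~\ref{lm:truncated_limit} for the spectrum $\KO = (\RGr, \HGr, \RGr, \HGr, \ldots)$ and then to invoke the defining property of the Bott element $\beta$. Passing to the cofinal subsystem of odd positions, Lemma~\ref{lm:truncated_limit} identifies the target of the top row with $\varprojlim \KW_\Q^{*+4(2n+1),*+2(2n+1)}(\HGr) = \varprojlim \KW_\Q^{*+8n+4,*+4n+2}(\HGr)$. For a class $\gamma\colon \KO\to \KW_\Q\wedge \Sph^{p,q}$, the top map sends $\gamma$ to the tuple $(\gamma\circ \phi_{2n+1})_{n\ge 0}$, where
\[
\phi_m\colon \Sigma^\infty_{\HP^1}\KO_m\{-m\}\to \KO
\]
is the canonical comparison map (cf.\ the remark after the definition of $\Tr_n A$ and Lemma~\ref{lm:unit_KO}). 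Applying $T$ discards the $n=0$ entry, producing $(\gamma\circ \phi_{2n+3})_{n\ge 0}$; the bottom row analogously produces $(R(\gamma)\circ \phi_{2n+1})_{n\ge 0}$.

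With this unpacking, commutativity of the square reduces to verifying, for each $n\ge 0$, the identity
\[
(\Sigma^{8,4}\gamma)\circ (-\cup\beta^{-1})\circ \phi_{2n+1} \;=\; \Sigma^{8,4}(\gamma\circ \phi_{2n+3})
\]
in $\SH(k)$, after identifying $\Sph^{8,4}\cong (\HP^1)^{\wedge 2}$ via Corollary~\ref{cor:HP1_sphere} together with the canonical isomorphism $\Sigma^\infty_{\HP^1}\HGr\{-(2n+3)\}\wedge (\HP^1)^{\wedge 2}\cong \Sigma^\infty_{\HP^1}\HGr\{-(2n+1)\}$. Cancelling $\Sigma^{8,4}\gamma$ from both sides, this further reduces to the purely structural equality
\[
\phi_{2n+1} \;=\; (-\cup\beta)\circ \bigl(\phi_{2n+3}\wedge \id_{\Sph^{8,4}}\bigr).
\]

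This last identity is built into the definition of $\beta$: the left hand side is the direct comparison map to $\KO$, while the right hand side first descends two positions via $\phi_{2n+3}\wedge \id$ and then applies the $(8,4)$-periodicity $-\cup\beta\colon \KO\wedge \Sph^{8,4}\xrightarrow{\simeq} \KO$. Both describe the same morphism, because $\beta$ was introduced precisely so that cup-product with $\Sigma^{8,4}\beta$ matches the two-fold structure shift $\sigma^o_{\KO}\circ(\id_{\HP^1}\wedge \sigma^s_{\KO})$. Concretely, the defining identity
\[
(\sigma^o_{\KO})^{\GW}(\tau^s)|_{\HP^1\wedge \RGr(2r,2n)} = (\langle \Hstruct(1)\rangle - \langle \Hplane_{-}\rangle)\boxtimes \tau^o|_{\RGr(2r,2n)}
\]
and its analogue for $\sigma^s_{\KO}$ iterate to match $\Sigma^2_{\HProj^1}\beta = (\langle \Hstruct(1)\rangle - \langle \Hplane_{-}\rangle)^{\boxtimes 2}$, yielding the claimed structural equality.

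The main obstacle I anticipate is notational rather than conceptual: making sure all the identifications among $\HP^1$, $\Sph^{4,2}$, the shift functor $\{m\}$, and the twist by $\Sph^{8,4}$ line up so that the two maps being compared inhabit the same hom-set before their equality is asserted. Lemma~\ref{lm:unit_KO} supplies the explicit formulas needed for this bookkeeping, after which no further computation is required beyond a careful chase through the construction of $\KO$ as an $\HP^1$-spectrum.
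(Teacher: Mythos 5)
Your proposal is correct and follows essentially the same route as the paper: the paper's proof consists precisely of the commutative square $\Tr_{2n+1}\KO = (\Tr_{2(n+1)+1}\KO)\{2\}$ over $\KO \xrightarrow{-\cup\beta^{-1}} \KO\wedge\Sph^{8,4}\cong\KO\{2\}$, which is exactly your structural identity $\phi_{2n+1}=(-\cup\beta)\circ(\phi_{2n+3}\wedge\id_{\Sph^{8,4}})$ justified, as you do, by the fact that $\beta$ is defined to realize the double $\HP^1$-shift. Your write-up merely unpacks the componentwise bookkeeping that the paper leaves implicit as ``straightforward.''
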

\begin{proof}
Straightforward from the commutativity of the following diagram:
\[
\xymatrix{
\Tr_{2n+1} \KO \ar[d]^i \ar[rr]^= & & (\Tr_{2(n+1)+1} \KO) \{2\} \ar[d]^i \\
\KO \ar[r]^{ - \cup \beta^{-1}} & \KO\wedge \Sph^{8,4} \ar[r]^\simeq & \KO\{2\} 
}
\]
\end{proof}

\begin{lemma} \label{lm:stable_as_sequence}
Let $\gamma\in \KW_\Q^{0,0}(\KO)$ be a stable operation such that
\[
\gamma \mapsto
(\gamma_1,\gamma_3,\dots)\in  \varprojlim \KW_\Q^{8n+4,4n+2}(\HGr)
\]
under the canonical morphism given by Lemma~\ref{lm:truncated_limit}. Let $X$ be a pointed motivic space and let 
\[
f=(f_0,f_1,f_2,\dots)\colon \Sigma^{\infty}_{\HP^1} X \{-1\} \to \KO
\]
be a morphism of spectra. Then
\[
\gamma(f)=\Sigma_{\HP^1}^{-1}\gamma_{1}(f_{1}),
\]
where $f_{1}\in \Hom_{\Ho (k)}(X,\HGr)$ is treated as an element of $\GW^{[2]}_0(X)$ and $\gamma_{1}$ is treated as an operation $\GW^{[2]}_0\to \KW_\Q^{4,2}$.
\end{lemma}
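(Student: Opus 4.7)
The plan is to factor the morphism $f$ in $\SH(k)$ through a canonical universal map
\[
\iota\colon \Sigma^{\infty}_{\HP^1}\HGr\{-1\}\to \KO,
\]
obtained as the composition of the canonical isomorphism $\Sigma^{\infty}_{\HP^1}\HGr\{-1\}\cong \Tr_1\KO$ with the projection $\Tr_1\KO\to \KO$, in the form
\[
f = \iota\circ \Sigma^{\infty}_{\HP^1}f_1\{-1\}.
\]
Once this factorization is in place, the rest amounts to observing that $\gamma\circ \iota$, up to the suspension isomorphism $\Sigma_{\HP^1}$, is exactly the level-$1$ component $\gamma_1$ of the image of $\gamma$ under the canonical map of Lemma~\ref{lm:truncated_limit}.

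Indeed, the canonical homomorphism $\KW_\Q^{0,0}(\KO)\to \varprojlim \KW_\Q^{4n,2n}(\KO_n)$ from Lemma~\ref{lm:truncated_limit} is constructed by pulling $\gamma$ back along $\Tr_n\KO\cong \Sigma^{\infty}_{\HP^1}\KO_n\{-n\}\to \KO$ and applying $\Sigma^n_{\HP^1}$. For $n=1$ this literally reads $\gamma_1 = \Sigma_{\HP^1}(\gamma\circ \iota)\in \KW_\Q^{4,2}(\HGr)$.

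The principal subtle point, and essentially the only one that needs a careful argument, is the factorization of $f$. Since the structure maps of $\Sigma^{\infty}_{\HP^1}X\{-1\}$ are (after the shift) the canonical identities on $(\HP^1)^{\wedge n}\wedge X$, the compatibility of the sequence $f=(f_0,f_1,f_2,\dots)$ with structure maps forces $f_0 = *$ and, inductively for $n\ge 1$,
\[
f_{n+1} = \sigma_n^{\KO}\circ(\id_{\HP^1}\wedge f_n),
\]
where $\sigma_n^{\KO}$ denotes the $n$-th structure map of $\KO$. Under the identification $\Sigma^{\infty}_{\HP^1}\HGr\{-1\}\cong \Tr_1\KO$, the canonical map $\iota$ is given above level $1$ precisely by the iterated structure maps of $\KO$, so that the $n$-th component of $\iota\circ \Sigma^{\infty}_{\HP^1}f_1\{-1\}$ is given by the same recursion starting from $f_1$. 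Hence the two morphisms of $\HP^1$-spectra agree in $\SH(k)$.

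Combining these observations, applying $\Sigma_{\HP^1}$ to both sides of $\gamma(f) = \gamma\circ\iota\circ \Sigma^{\infty}_{\HP^1}f_1\{-1\}$ and using functoriality of the suspension yields
\[
\Sigma_{\HP^1}\gamma(f) = \gamma_1\circ \Sigma^{\infty}_{\HP^1}f_1 = f_1^{\KW_\Q}(\gamma_1)\in \KW_\Q^{4,2}(X).
\]
Via Remark~\ref{rem:classes_as_operations}, interpreting $\gamma_1\in \KW_\Q^{4,2}(\HGr)$ as the natural transformation $\GW^{[2]}_0\to \KW_\Q^{4,2}$ it represents and $f_1\in \Hom_{\Ho(k)}(X,\HGr)$ as the corresponding element of $\GW^{[2]}_0(X)$ via the embedding $\HGr\hookrightarrow \{0\}\times\HGr$, the right-hand side is by definition $\gamma_1(f_1)$; applying $\Sigma_{\HP^1}^{-1}$ finishes the proof.
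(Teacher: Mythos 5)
Your argument is correct and is essentially the paper's proof spelled out in detail: the paper simply cites Lemma~\ref{lm:stabilization}, whose content (compatibility of the unstable representability of $\GW^{[2]}_0$ with the stable identification $\Sigma^\infty_{\HP^1}\HGr\{-1\}\cong\Tr_1\KO\to\KO$) is exactly the combination of your factorization $f=\iota\circ\Sigma^\infty_{\HP^1}f_1\{-1\}$ and the final Yoneda step. The levelwise verification of the factorization and the identification $\gamma_1=\Sigma_{\HP^1}(\gamma\circ\iota)$ from Lemma~\ref{lm:truncated_limit} are both carried out correctly.
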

\begin{proof}

Follows from Lemma~\ref{lm:stabilization}.

\end{proof}

\begin{theorem} \label{thm:rational_stable_operations_KW}
The homomorphism of left $\KW_\Q^{0,0}(\pt)\cong \W_\Q(k)$-modules
\[
Ev\colon \KW_\Q^{0,0}(\KW_\Q) \to \prod_{m\in \Z} \W_\Q(k)
\]
given by 
\[
Ev(\phi)=\left(\dots,\beta^2\phi(\beta^{-2}),\beta\phi(\beta^{-1}),\phi(1),\beta^{-1}\phi(\beta),\beta^{-2}\phi(\beta^{2}),\dots \right)
\]
is an isomorphism of algebras. Here the product on the left is given by composition and the product on the right is the component-wise one. 

Moreover, $\KW_\Q^{p,q}(\KW_\Q)=0$ when $4\nmid p-q$ and the above isomorphism induces an isomorphism of left $\KW_\Q^{*,*}(\pt)\cong \W_\Q(k)[\eta^{\pm 1},\beta^{\pm 1}]$-modules
\[
\KW_\Q^{*,*}(\KW_\Q)\cong \bigoplus_{r,s\in\Z} \beta^r\eta^s\prod_{m\in \Z} \W_\Q(k)
\]
with $\deg \beta =(-8,-4)$, $\deg \eta =(-1,-1)$.
\end{theorem}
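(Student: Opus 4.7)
The strategy is to combine the localization identification $\KW_\Q^{*,*}(\KW_\Q)=\KW_\Q^{*,*}(\KO)$ with Lemma~\ref{lm:truncated_limit}, then to analyze the resulting inverse system using the explicit computation of $\KW_\Q^{*,*}(\HGr)$ in Theorem~\ref{thm:cohomological_HGr} and the transition-map formula from Lemma~\ref{lm:main_computation}. Applying Lemma~\ref{lm:truncated_limit}(2) to the $\HP^1$-spectrum $\KO=(\RGr,\HGr,\RGr,\HGr,\ldots)$ along the cofinal subsequence of odd levels gives a Milnor-type exact sequence
\begin{equation*}
0\to{\varprojlim}^1\KW_\Q^{p+8m+3,\,q+4m+2}(\HGr)\to\KW_\Q^{p,q}(\KO)\to\varprojlim\KW_\Q^{p+8m+4,\,q+4m+2}(\HGr)\to 0
\end{equation*}
whose transition maps are pullbacks along the two-fold iteration $(\HP^1)^{\wedge 2}\wedge\HGr\to\HGr$ of the structure maps. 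By Definition~\ref{def:KO}, this two-fold map pulls $\tau^s$ back to $(\langle\Hstruct(1)\rangle-\langle\Hplane_-\rangle)^{\boxtimes 2}\boxtimes\tau^s$, hence coincides with the morphism $B$ of Lemma~\ref{lm:main_computation}; Lemma~\ref{lm:coh_shift_by_beta} provides its compatibility with $\beta$-periodicity.

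By Theorem~\ref{thm:cohomological_HGr} and Newton's identities we have $\KW_\Q^{*,*}(\HGr_+)\cong\W_\Q(k)[\eta^{\pm 1},\beta^{\pm 1}][[s_1,s_2,\ldots]]_h$ with $s_i=s_i(\tau^s)$. Combining Corollary~\ref{cor:normalization} with the tensor formula $\beta\cup[E_1]\cup[E_2]=[E_1\otimes E_2]$ yields $[\Hstruct(1)\boxtimes\Hstruct(1)]=\beta\cdot\Sigma_{\HP^1}^2\,1$, and Lemma~\ref{lm:main_computation} then shows
\begin{equation*}
\Sigma_{\HP^1}^{-2}B^{\KW}(s_i)=\beta(a_is_i+c_is_{i-2}),\ a_{2j+1}=(2j+1)^2,\ c_{2j+1}=-8j(2j+1)\beta^{-1},\ a_{2j}=c_{2j}=0,
\end{equation*}
extended multiplicatively. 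The crucial observation is that $b_1^{\KW}(\Hstruct(1))^2=0$ in $\KW^{*,*}(\HProj^1)$ (from Theorem~\ref{thm:cohomological_HGr_finite}) implies $[\Hstruct(1)\boxtimes\Hstruct(1)]^2=0$, so $B^{\KW}$ annihilates every product $s_is_j$. Hence any coherent sequence in the inverse limit has linear-in-$s$ components at each level $V_m:=\KW_\Q^{p+8m+4,\,q+4m+2}(\HGr)$.

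A bidegree count shows that a surviving linear generator (up to an invertible $\eta$-factor) must have index $i=(p-q)/2+2m+1+2r$; the transition formula requires $i$ odd, i.e.\ $4\mid p-q$, which proves the vanishing claim when $4\nmid p-q$. Writing $p-q=4a$ in the remaining cases, the surviving generators are $e^{(r)}_m:=\beta^r s_{2(m+a+r)+1}$ for $r\ge-(m+a)$, and the transition map acts as
\begin{equation*}
T(e^{(r')}_{m+1})=(2(m+a+r')+3)^2\,e^{(r'+1)}_m-8(m+a+r'+1)(2(m+a+r')+3)\,e^{(r')}_m.
\end{equation*}
The nonzero diagonal $(2j+1)^2$ makes $T$ surjective on the linear subspace via backwards recursion, so Mittag-Leffler yields $\varprojlim{}^1=0$ and $\varprojlim\cong\prod_{\mu\in\Z}\W_\Q(k)$, with the $\mu$-th free parameter labelling the diagonal family indexed by a fixed value of $m+a+r$.

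The main obstacle is matching this abstract product with evaluation at Bott classes. For this I combine Lemma~\ref{lm:stable_as_sequence} with Lemma~\ref{lm:unit_KO}: the class $\beta^{-\mu}\in\KW_\Q^{8\mu,4\mu}(\pt)$ is represented at the appropriate level of $\KO$ by an iterated $\boxtimes$-product of $\langle\Hstruct(1)\rangle-\langle\Hplane_-\rangle$, which under the linear-in-$s$ identification picks out exactly the $\mu$-th coordinate of $\varprojlim$; the isomorphism is thus $\phi\mapsto(\beta^m\phi(\beta^{-m}))_m$. The algebra assertion uses the automatic $\W_\Q(k)$-linearity of stable operations --- a consequence of Morel's identification of $\W(k)$ with the zeroth $\eta$-inverted rational stable stem together with $\SSph$-linearity of spectrum maps --- giving $\phi(\psi(\beta^{-m}))=c_m(\psi)\phi(\beta^{-m})=c_m(\psi)c_m(\phi)\beta^{-m}$, matching componentwise multiplication. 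Finally, the full bigraded statement follows from the $(1,1)$-periodicity of $\KW_\Q$ via $\eta$ and its $(8,4)$-periodicity via $\beta$.
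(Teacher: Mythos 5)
Your proposal follows essentially the same route as the paper's proof: identify $\KW_\Q^{*,*}(\KW_\Q)$ with $\KW_\Q^{*,*}(\KO)$, run the Milnor sequence of Lemma~\ref{lm:truncated_limit} over the odd levels, identify the transition map with $B^{\KW}$ from Lemma~\ref{lm:main_computation}, kill decomposables using $[\Hstruct(1)\boxtimes\Hstruct(1)]^2=0$, do the bidegree count for the vanishing when $4\nmid p-q$, and evaluate on Bott powers via Lemmas~\ref{lm:coh_shift_by_beta} and~\ref{lm:stable_as_sequence}.

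Two steps as written do not quite hold up. First, your reason for surjectivity of the transition map on the linear part is inverted: a ``backwards recursion'' driven by the diagonal entries $(2j+1)^2$ would have to be initialized at $j=\infty$, which is impossible in a product indexed by an interval unbounded above. What actually gives surjectivity onto the odd-indexed part (hence Mittag--Leffler and $\varprojlim^1=0$) is the invertibility of the \emph{off-diagonal} coefficients $c_{2j+1}=-8j(2j+1)$ for $j\ge 1$, which permits a forward recursion: choose the coefficient of $\overline{s}_1$ freely and solve successively for the coefficients of $\overline{s}_3,\overline{s}_5,\dots$. Second, your identification of $\varprojlim$ with $\prod_{\mu\in\Z}\W_\Q(k)$ ``along diagonals'', and the claim that the $\mu$-th coordinate is $\beta^{\mu}\phi(\beta^{-\mu})$, are asserted rather than proved: in the basis $e^{(r)}_m=\beta^r s_{2(m+a+r)+1}$ the transition map is triangular, not a shift, so the diagonal families are not literally compatible sequences. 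The paper handles this by recursively constructing elements $\overline{\rho}_{2j+1}=\sum_{l\ge j}\alpha_{2j+1,2l+1}\overline{s}_{2l+1}$ with $S_\Q(\overline{\rho}_1)=0$ and $S_\Q(\overline{\rho}_{2j+1})=\beta\overline{\rho}_{2j-1}$, in which basis the transition map is a shift times $\beta$; the normalization $\alpha_{1,1}=1$ together with the vanishing of $s_i^{\KW}(\langle \Hstruct(1)\rangle-\langle\Hplane_-\rangle)$ for $i>1$ and $s_1^{\KW}(\langle\Hstruct(1)\rangle-\langle\Hplane_-\rangle)=[\Hstruct(1)]=\Sigma_{\HP^1}1$ is then exactly what makes the evaluation map pick out the coordinates as claimed. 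These are repairable bookkeeping gaps rather than wrong ideas, but the change of basis is genuinely needed to close the argument. Your justification of multiplicativity of $Ev$ via lifting $\W_\Q(k)$-scalars to the rational $\eta$-inverted stable stem is sound and is actually more explicit than the paper's ``clearly agrees with the products''.
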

\begin{proof}
Having in mind the canonical identifications
\[
\KW_\Q^{*,*}(\KW_\Q)=\KW_\Q^{*,*}(\KO_\Q)=\KW_\Q^{*,*}(\KO)
\]
we focus on the computation of $\KW_\Q^{*,*}(\KO)$.

Lemma~\ref{lm:truncated_limit} yields the short exact sequence
\[
\resizebox{\textwidth}{!}{$
0\to \varprojlim\nolimits^1 \KW^{*+8n+3,*+4n+2}_\Q (\HGr) \to \KW^{*,*}_\Q (\KO) \to \varprojlim_{} \KW_\Q^{*+8n+4,*+4n+2}(\HGr) \to 0
$}
\]
with the limit taken with respect to the morphisms
\[
\Sigma_{\HP^1}^{-2} \circ B^{\KW}\colon \KW_\Q^{*+8n+12,*+4n+6}(\HGr) \to \KW_\Q^{*+8n+4,*+4n+2}(\HGr),
\]
where $B=\sigma_{\KO}^o \circ (\id_{\HP^1}\wedge \sigma_{\KO}^s)$ is the same morphism as in Lemma~\ref{lm:main_computation} up to the canonical identification $\HP^1\cong \HProj^1$. 

Consider the following diagram.
\[
\xymatrix@C=3pc {
 \KW_\Q^{*+8n+12,*+4n+6}(\HGr) \ar[r]^{\pi} \ar[d]^(0.5){B^{\KW}} &     \IQ_\Q^{*+8n+12,*+4n+6}(\HGr)  \ar[dd]^{S_\Q} \ar@/^1.5pc/[ddl]_(0.4){S'_\Q}\\
 \KW_\Q^{*+8n+12,*+4n+6}(\HP^1\wedge \HP^1 \wedge \HGr)  \ar[d]^(0.5){\Sigma_{\HP^1}^{-2} }& \\
 \KW_\Q^{*+8n+4,*+4n+2}(\HGr)  \ar[r]^\pi &  \IQ_\Q^{*+8n+4,*+4n+2}(\HGr)  
}
\]
Here 
\begin{itemize}
\item
$\IQ^{*,*}_\Q(\HGr)=\varprojlim\limits_{m,n} \IQ(\KW_\Q^{*,*}(\HGr(2m,2n),*))$ is the indecomposable quotient (i.e. the ring modulo the reducible elements) of $\KW_\Q^{*,*}(\HGr)$. The Newton identities yield 
\[
(-1)^{i+1}i\overline{b_i^{\KW}(\tau^s)}=\overline{s_i^{\KW}(\tau^s)}
\]
in the indecomposable quotient, thus Theorem~\ref{thm:cohomological_HGr} allows us to identify
\[
\IQ^{*,*}_\Q(\HGr)=\big(\prod_{i\ge 1} \KW^{*-4i,*-2i}_\Q(\pt)\overline{b_i^{\KW}(\tau^s)}\big)_h=\big(\prod_{i\ge 1} \KW^{*-4i,*-2i}_\Q(\pt)\overline{s}_i\big)_h
\]
for $s_i=s_i^{\KW}(\tau^s)$.
\item
$\pi$ is the canonical projection.
\item
$S'_\Q$ is given by $S'_\Q(\overline{s}_i)= \beta a_i s_i+ c_i s_{i-2}$ with 
\[
a_{2j}=c_{2j}=0, a_{2j+1}=(2j+1)^2, c_{2j+1}=-8j(2j+1).
\]
\item
$S_\Q=\pi\circ S'_\Q$.
\end{itemize}
The ring $\KW_\Q^{*,*}(\HP^1\wedge \HP^1 \wedge \HGr)$ has trivial multiplication by Theorem~\ref{thm:cohomological_HGr_finite} (since $b_1^{\KW}(\Hstruct(1))^2=0$ on $\HP^1$), thus $B^{\KW}$ factors through the indecomposable quotient and Lemma~\ref{lm:main_computation} yields commutativity of the diagram. It follows that the canonical homomorphisms
\begin{gather*}
\pi\colon \varprojlim \KW_\Q^{*+8n+4,*+4n+2}(\HGr) \xrightarrow{\simeq} \varprojlim \IQ_\Q^{*+8n+4,*+4n+2}(\HGr), \\
\pi^1\colon \varprojlim\nolimits^1 \KW_\Q^{*+8n+3,*+4n+2}(\HGr) \xrightarrow{\simeq} \varprojlim\nolimits^1 \IQ_\Q^{*+8n+3,*+4n+2}(\HGr) 
\end{gather*}
are isomorphisms.

The morphism
\[
\resizebox{\textwidth}{!}{$
S_\Q\colon \big(\prod\limits_{i\ge 1} \KW^{*+8n-4i+12,*+4n-2i+6}_\Q(\pt)\overline{s}_i \big)_h\to \big(\prod\limits_{i\ge 1} \KW^{*+8n-4i+4,*+4n-2i+2}_\Q(\pt)\overline{s}_i \big)_h
$}
\]
is given by the following matrix:
\[
\begin{pmatrix}
\beta a_1 & 0 & c_3 & 0 & 0 & 0 & \hdots \\
0 & 0 & 0 & 0 & 0 & 0 & \hdots \\
0 & 0 & \beta a_3 & 0 & c_5 & 0 & \hdots \\
0 & 0 & 0 & 0 & 0 & 0 & \hdots \\
0 & 0 & 0 & 0 & \beta a_5 & 0 & \hdots \\
0 & 0 & 0 & 0 & 0 & 0 & \hdots \\
\vdots & \vdots & \vdots & \vdots & \vdots & \vdots & \ddots 
\end{pmatrix},
\]
where $a_{2j+1}$ and $c_{2j+1}$ are invertible. Clearly we have 
\[
\operatorname{Im} (S_\Q\circ S_\Q)=\operatorname{Im} (S_\Q)= \big(\prod_{j\ge 0} \KW^{*+8(n-j),*+4(n-j)}_\Q(\pt)\overline{s}_{2j+1}\big)_h,
\]
thus the $\varprojlim\nolimits^1$ term vanishes. For $4\nmid p-q$ we have
\[
\KW_\Q^{p+8(n-j),q+4(n-j)}(\pt)\cong \W^{[p-q+4(n-j)]}_\Q(k)=0
\]
whence the limit is trivial and $\KW_\Q^{p,q}(\KW_\Q)=0$. In view of the periodicities given by $\eta$ and $\beta$ from now on we deal with $\KW_\Q^{0,0}(\KW_\Q)$. Moreover, it is sufficient to show that the homomorphism $Ev$ from the statement of the theorem is an isomorphism since it clearly agrees with the products.

In order to compute the above limit for $S_\Q$-s we may drop all the terms involving $\overline{s}_{2j}$ and consider 
\[
S_\Q\colon \prod\limits_{j\ge 0} \KW^{8(n-j),4(n-j)}_\Q(\pt)\overline{s}_{2j+1} \to \prod\limits_{j\ge 0} \KW^{8(n-j),4(n-j)}_\Q(\pt)\overline{s}_{2j+1}.
\]
For every $j\ge 0$ choose
\[
\overline{\rho}_{2j+1}=\sum_{l\ge j} \alpha_{2j+1,2l+1} \overline{s}_{2l+1} \in  \prod_{j\ge 0} \KW^{-8j,-4j}_\Q(\pt)\overline{s}_{2j+1}
\]
such that 
\begin{enumerate}
\item
$S_\Q(\overline{\rho}_1)=0$,
\item
$S_\Q(\overline{\rho}_{2j+1})=\beta \overline{\rho}_{2j-1}$, 
\item
$\alpha_{1,1}=1$.
\end{enumerate}
The kernel of $S_\Q$ is a free module of rank $1$ thus (1) and (3) uniquely determine $\overline{\rho}_1$. Item (2) together with the condition that the sum for $\overline{\rho}_{2j+1}$ does not contain  $\overline{s}_1$ uniquely determines $\overline{\rho}_{2j+1}$. One can easily see that $\alpha_{2j+1,2j+1}$ is invertible for every $j$ whence 
\[
\prod_{j\ge 0} \KW^{8(n-j),4(n-j)}_\Q(\pt)\overline{s}_{2j+1}=\prod_{j\ge 0} \KW^{*+8n,*+4n}_\Q(\pt)\overline{\rho}_{2j+1}.
\]
In the new basis consisting of $\overline{\rho}_{2j+1}$-s the morphism $S_\Q$ is just a shift multiplied by $\beta $, thus we can easily compute the inverse limit, obtaining
\[
\varprojlim \KW_\Q^{8n+4,4n+2}(\HGr)=\varprojlim \IQ_\Q^{8n+4,4n+2}(\HGr)= \prod_{m\in \Z} \KW^{0,0}_\Q(\pt)\rho^{st}_{m},
\]
where $\deg \rho^{st}_{m}=(0,0)$ and the structure morphisms
\[
\prod_{m\in \Z} \KW^{0,0}_\Q(\pt)\rho^{st}_{m} \to \KW_\Q^{8n+4,4n+2}(\HGr)
\]
are given by 
\[
\rho^{st}_{m}\mapsto 
\left[
\begin{array}{ll}
\beta^{-n}\rho_{2(m+n)+1}, & m+n\ge 0\\
0, & m+n<0
\end{array}
\right.
\]
for $\rho_{2(m+n)+1}=\sum\limits_{l\ge m+n} \alpha_{2(m+n)+1,2l+1} s_{2l+1} \in \KW_\Q^{4,2}(\HGr)$.

In order to obtain the claim of the theorem it is sufficient to check that 
\[
\beta^{-n}\rho^{st}_{m}(\beta^{n})=
\left[
\begin{array}{ll}
1, & n=m,\\
0, & n\neq m.
\end{array}
\right.
\]
It follows from Lemma~\ref{lm:coh_shift_by_beta} that $\rho^{st}_{m}(\beta^n)=\beta^{n} \rho^{st}_{m-n} (1)$, so it is sufficient to check that
\[
\rho^{st}_{m}(1)=
\left[
\begin{array}{ll}
1, & m=0,\\
0, & m\neq 0.
\end{array}
\right.
\]
Lemma~\ref{lm:stable_as_sequence} yields
\[
\rho^{st}_{m}(1)=\Sigma^{-1}_{\HP^1} \rho_{2m+1}(\langle \Hstruct(1) \rangle -  \langle \Hplane_{-} \rangle).
\]
By the definition of $\rho_{2m+1}$ we have
\[
\rho_{2m+1}(\langle \Hstruct(1) \rangle -  \langle \Hplane_{-} \rangle)=
\left[
\begin{array}{ll}
\sum_{l\ge m} \alpha_{2m+1,2l+1} s_{2l+1}^{\KW}(\langle \Hstruct(1) \rangle -  \langle \Hplane_{-} \rangle), & m\ge 0,\\
0, & m<0.
\end{array}
\right.
\]
All the higher characteristic classes of $\langle \Hstruct(1) \rangle -  \langle \Hplane_{-} \rangle$ vanish while 
\[
s_1^{\KW}(\langle \Hstruct(1) \rangle -  \langle \Hplane_{-} \rangle)=[\Hstruct(1)],
\]
thus
\[
\rho_{2m+1}(\langle \Hstruct(1) \rangle -  \langle \Hplane_{-} \rangle)=
\left[
\begin{array}{ll}
[\Hstruct(1)]=\Sigma_{\HP^1}^1 1, & m=0,\\
0, & m\neq 0
\end{array}
\right.
\]
and the claim follows.
\end{proof}

\begin{remark}
One can restate Theorem~\ref{thm:rational_stable_operations_KW} as follows.  Let 
\[
\mathcal{B}=\left( \Sigma^{8m,4m}\beta^m \right)_{m\in \Z}  \colon \bigoplus_{m\in \Z}  \mathbb{S} \wedge \Sph^{8m,4m} \to \KW_\Q
\]
be the morphism induced by $ \Sigma^{8m,4m} \beta^m\colon \mathbb{S}\wedge \Sph^{8m,4m} \to \KW_\Q$. Then the pullback homomorphism
\[
\mathcal{B}^{\KW_\Q}\colon \KW_\Q^{*,*}(\KW_\Q) \to \KW_\Q^{*,*}(\bigoplus_{m\in \Z} \mathbb{S}\wedge \Sph^{8m,4m})
\]
is an isomorphism.
\end{remark}

\section{Stable cooperations in $\KW_\Q$ and $\KW$}

In this section we compute the algebra of cooperations in $\KW_\Q$ and give an additive description of the cooperations in $\KW$. The approach is dual to the one used in the proof of Theorem~\ref{thm:rational_stable_operations_KW} and based on Lemma~\ref{lm:truncated_limit} and Theorem~\ref{thm:homological_HGr}.

\begin{lemma} \label{lm:shift_by_beta}
The following diagram commutes.
\[
\xymatrix{
\varinjlim (\KW_\Q)_{*+8n+4,*+4n+2}(\HGr) \ar[r]^(0.65){\cong}\ar[d]^T & (\KW_\Q)_{*,*}(\KO) \ar[d]^{- \cprod \beta_r } \\
\varinjlim (\KW_\Q)_{*+8(n+1)+4,*+4(n+1)+2}(\HGr) \ar[r]^(0.65){\cong} & (\KW_\Q)_{*+8,*+4}(\KO)
}
\]
Here the horizontal isomorphisms are the canonical ones given by Lemma~\ref{lm:truncated_limit}, $T$ is induced by the shift
\begin{align*}
\bigoplus_{n\ge 0} (\KW_\Q)_{*+8n+4,*+4n+2}(\HGr) & \to\bigoplus_{n\ge 0} (\KW_\Q)_{*+8(n+1)+4,*+4(n+1)+2}(\HGr)\\
(t_1,t_3,t_5,\dots) &\mapsto (t_3,t_5,\dots),
\end{align*}
$\beta_r=u_{\KW_\Q}\wedge \Sigma^{8,4}\beta\in (\KW_\Q)_{8,4}(\KO)$ and $- \cprod \beta_r $ is given by Definition~\ref{def:right_cap}.
\end{lemma}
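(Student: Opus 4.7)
This is the homological analogue of Lemma~\ref{lm:coh_shift_by_beta} and the proof follows the same pattern: unwind the canonical isomorphisms from Lemma~\ref{lm:truncated_limit}(1) and invoke the Bott periodicity of $\KO$. Both horizontal isomorphisms in the diagram are instances of Lemma~\ref{lm:truncated_limit}(1) applied to the $\HP^1$-spectrum $\KO=(\RGr,\HGr,\RGr,\HGr,\dots)$, restricted to the cofinal subsequence of odd-indexed spaces $\KO_{2n+1}=\HGr$ (so each level jump corresponds to two structure maps, i.e.\ a bidegree shift by $(8,4)$). At level $n$ the canonical map into the appropriate homology of $\KO$ is induced by the structure morphism $\iota_{2n+1}\colon \Sigma^\infty_{\HP^1}\HGr\{-(2n+1)\}\to \KO$ coming from the identification $\HGr=\KO_{2n+1}$. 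The shift $T$ sends $y\in(\KW_\Q)_{p+8(n+1)+4,q+4(n+1)+2}(\HGr)$ at level $n+1$ of the top colimit to the same element $y$ at level $n$ of the bottom colimit; the image in $(\KW_\Q)_{p,q}(\KO)$ in the top row uses $\iota_{2n+3}$, while the image in $(\KW_\Q)_{p+8,q+4}(\KO)$ in the bottom row uses $\iota_{2n+1}$.

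Consequently the commutativity reduces to a single identity in $\SH(k)$: under the canonical iso $\Sigma^\infty_{\HP^1}\HGr\{-(2n+3)\}\cong \Sigma^\infty_{\HP^1}\HGr\{-(2n+1)\}\wedge \Sph^{-8,-4}$, the morphism $\iota_{2n+3}$ factors as $\iota_{2n+1}\wedge \id_{\Sph^{-8,-4}}$ composed with the $\SH(k)$-morphism $\KO\wedge \Sph^{-8,-4}\to \KO$ realizing the $(8,4)$-periodicity of $\KO$. This is precisely the defining property of $\beta\in\KO^{-8,-4}(\pt)$: the two structure maps $\sigma_{\KO}^o$, $\sigma_{\KO}^s$ that relate the identifications $\HGr=\KO_{2n+1}$ and $\HGr=\KO_{2n+3}$ are by construction those which implement the Bott periodicity of $\KO$ realized by cup with $\beta$.

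Finally, it remains to identify ``post-compose with cup-by-$\beta$ on the $\KO$ factor'' for an element of $(\KW_\Q)_{*,*}(\KO)$ with the right $\cprod$-product by $\beta_r=u_{\KW_\Q}\wedge \Sigma^{8,4}\beta$. Unwinding Definition~\ref{def:right_cap} with $A=\KW_\Q$, $B=\KO$ and $y=\beta_r$, the unit $u_{\KW_\Q}$ is absorbed by $m_{\KW_\Q}$ against the $\KW_\Q$-factor of the given class $\tilde y$, and what remains is exactly the composite
\[
\mathbb{S}\wedge\Sph^{p+8,q+4}\xrightarrow{\tilde y\wedge \Sigma^{8,4}\beta} \KW_\Q\wedge \KO\wedge \KO\xrightarrow{\id\wedge m_{\KO}} \KW_\Q\wedge \KO,
\]
i.e.\ $\tilde y$ post-composed with cup-by-$\beta$ on its $\KO$ factor. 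Combining these ingredients yields the claimed commutativity. The main obstacle is the routine but tedious bookkeeping of permutation isomorphisms and canonical identifications of $\HP^1$-suspensions with spheres, but no new conceptual input is required beyond the defining property of $\beta$.
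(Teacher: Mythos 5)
Your argument is correct and is essentially the paper's own proof: the paper likewise reduces everything to the commutativity of the diagram relating the truncation inclusions $\Tr_{2n+1}\KO\to\KO$ and $(\Tr_{2(n+1)+1}\KO)\{-2\}\to\KO\{-2\}$ with the periodicity map $-\cup\beta\colon\KO\to\KO\wedge\Sph^{-8,-4}\cong\KO\{-2\}$, which is exactly your factorization of $\iota_{2n+3}$ through $\iota_{2n+1}\wedge\id_{\Sph^{-8,-4}}$ and the Bott periodicity. Your extra paragraph unwinding Definition~\ref{def:right_cap} to identify $-\cprod\beta_r$ with post-composition by cup-with-$\beta$ on the $\KO$ factor is a detail the paper leaves implicit, and it is carried out correctly.
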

\begin{proof}
Follows from the commutativity of the following diagram:
\[
\xymatrix{
\Tr_{2n+1} \KO \ar[d]^i \ar[rr]^= & & (\Tr_{2(n+1)+1} \KO) \{-2\} \ar[d]^i \\
\KO \ar[r]^(0.4){- \cup \beta} & \KO\wedge \Sph^{-8,-4} \ar[r]^\simeq & \KO\{-2\} 
}
\]

\end{proof}

\begin{theorem} \label{thm:cooperations}
Let $u_{\KW_\Q}\colon \SSph\to \KW_\Q$ be the unit map. Then the homomorphism of $\W_\Q(k)[\eta^{\pm 1}]\cong \bigoplus\limits_{n\in\Z}\KW_\Q^{n,n}(\pt)$-algebras
\[
\W_\Q(k)[\eta^{\pm 1}][\beta^{\pm 1}_l,\beta_r^{\pm 1}]\to (\KW_\Q)_{*,*}(\KW_\Q)
\]
given by 
\[
\beta_l\mapsto \Sigma^{8,4}\beta\wedge u_{\KW_\Q},\quad \beta_r\mapsto u_{\KW_\Q}\wedge \Sigma^{8,4}\beta
\]
is an isomorphism. Here the product on the right is given by Definition~\ref{def:right_cap}.
\end{theorem}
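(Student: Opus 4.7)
The plan is to imitate the proof of Theorem~\ref{thm:rational_stable_operations_KW}, replacing the inverse-limit calculation in cohomology by a direct-limit calculation in homology. As a first step, the identification $(\KW_\Q)_{*,*}(\KW_\Q) = (\KW_\Q)_{*,*}(\KO)$ (noted after the definition of $\KW$) reduces the computation to $(\KW_\Q)_{*,*}(\KO)$. Then Lemma~\ref{lm:truncated_limit}(1) yields
\[
(\KW_\Q)_{*,*}(\KO) \;\cong\; \varinjlim_n (\KW_\Q)_{*+8n+4,*+4n+2}(\HGr_+),
\]
with transition maps induced by $\sigma_n\circ \Sigma_{\HP^1}$. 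Each term of the colimit is identified by Theorem~\ref{thm:homological_HGr} with the polynomial algebra $\KW_\Q^{*,*}(\pt)[x_1,x_2,\dots] = \W_\Q(k)[\eta^{\pm 1},\beta^{\pm 1}][x_1,x_2,\dots]$, where $x_i=\chi_i$.

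Under these identifications, the action of $\beta_l = \Sigma^{8,4}\beta \wedge u_{\KW_\Q}$ is visibly multiplication by the coefficient $\beta \in \KW_\Q^{*,*}(\pt)$ within each stage, while Lemma~\ref{lm:shift_by_beta} identifies the action of $\beta_r = u_{\KW_\Q} \wedge \Sigma^{8,4}\beta$ with the shift operator $T$ in the colimit system. Consequently, the homomorphism of the theorem is well-defined, and since both $\beta_l$ and $\beta_r$ act in commuting ways (one within each stage, the other by shifting stages), it is a ring homomorphism.

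For bijectivity I would dualise the argument from the proof of Theorem~\ref{thm:rational_stable_operations_KW}. Using the Kronecker pairing together with the duality isomorphism of Theorem~\ref{thm:homological_grassmannian}, the homological transition maps become the transpose of the operation $B^{\KW}$ that was computed in Lemma~\ref{lm:main_computation}. The explicit form of $B^{\KW}$ (with $a_{2j}=c_{2j}=0$ and $a_{2j+1},c_{2j+1}$ both nonzero) shows that, after a change of basis on the odd generators $x_{2j+1}$ dual to the $\overline{\rho}_{2j+1}$ from the cohomological proof, the transition in the colimit acts simply as multiplication by $\beta$. Since $\beta$ is invertible, each new basis element in the colimit is identified with a monomial in $\beta_l,\beta_r$, while the even-indexed generators $x_{2j}$ contribute nothing. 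As a complementary check, injectivity can be verified by pairing with the stable operations $\rho^{st}_m$: the monomials $\beta_l^a\beta_r^{b}$ pair with $\rho^{st}_m$ in a pairwise distinguishing manner, matching the $\prod_m \W_\Q(k)$-description from Theorem~\ref{thm:rational_stable_operations_KW}.

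The main obstacle will be the change-of-basis computation in the preceding paragraph. While $\varinjlim$ is exact (so no $\varinjlim^1$-term needs to be managed, in contrast with Lemma~\ref{lm:truncated_limit}(2)), each finite stage of the colimit is a genuinely infinite polynomial algebra, and one must carefully verify that the change of basis works uniformly across all stages and that no spurious classes survive the colimit beyond the monomials in $\beta_l^{\pm 1},\beta_r^{\pm 1}$. A clean way to finish is to organise the argument via a degree-by-degree comparison with the dimensions implied by $\W_\Q(k)[\eta^{\pm 1}][\beta_l^{\pm 1},\beta_r^{\pm 1}]$, reducing surjectivity to a straightforward count once injectivity (via $\rho^{st}_m$) has been established.
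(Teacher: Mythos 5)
Your overall strategy is the same as the paper's: reduce to $(\KW_\Q)_{*,*}(\KO)$, compute the colimit from Lemma~\ref{lm:truncated_limit}(1) term by term via Theorem~\ref{thm:homological_HGr}, and identify $-\cprod\beta_r$ with the shift using Lemma~\ref{lm:shift_by_beta}. But there is a genuine gap in the middle: you only track the polynomial generators $x_i=\chi_i$ and their duals to the power sums, and never say what happens to the decomposable classes, i.e.\ the monomials $\chi_{i_1}\cdots\chi_{i_k}$ with $k\ge 2$. Each stage $(\KW_\Q)_{*,*}(\HGr_+)\cong \W_\Q(k)[\eta^{\pm1},\beta^{\pm1}][x_1,x_2,\dots]$ is an infinitely generated free module in every bidegree precisely because of these monomials, so the proposed ``degree-by-degree comparison'' with $\W_\Q(k)[\eta^{\pm1}][\beta_l^{\pm1},\beta_r^{\pm1}]$ cannot close the argument, and knowing the transpose of $B^{\KW}$ on the power sums (Lemma~\ref{lm:main_computation}) determines the transition map only on the part of homology dual to the indecomposables. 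The missing ingredient is the observation that $\KW_\Q^{*,*}(\HP^1\wedge\HP^1\wedge\HGr)$ has trivial products in positive degrees (because $b_1^{\KW}(\Hstruct(1))^2=0$ on $\HP^1$), so $B^{\KW}$ kills all decomposables and, dually, the homological transition maps land in the primitive submodule $\PE_\Q$ dual to the indecomposable quotient $\IQ_\Q$. Only after this reduction does the colimit collapse onto the span of the $\overline{s}^{\,\vee}_{2j+1}$ and your change-of-basis/shift argument apply.

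A secondary omission: after computing the colimit abstractly you still must identify the distinguished generator in filtration $0$ with the unit $u_{\KW_\Q}\wedge u_{\KW_\Q}$, i.e.\ verify that the specific map of the statement (and not merely some abstract isomorphism) hits everything. This requires the normalization $\chi_1=\Sigma_{\HProj^1}1$ of Corollary~\ref{cor:normalization} together with the description of the unit in Lemma~\ref{lm:unit_KO}; your alternative of pairing against the operations $\rho^{st}_m$ would need the same input to evaluate the pairings, so it should be spelled out rather than asserted.
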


\begin{proof}
Abusing the notation put $\beta_l=\Sigma^{8,4}\beta\wedge u_{\KW_\Q}$, $\beta_r=u_{\KW_\Q}\wedge \Sigma^{8,4}\beta$. We need to show that
\[
(\KW_\Q)_{*,*}(\KW_\Q) = \bigoplus_{n,p,q\in \Z} \KW_\Q^{n,n}(\pt) \beta_l^p\cprod \beta_r^q.
\]

Identifying $(\KW_\Q)_{*,*}(\KW_\Q)=(\KW_\Q)_{*,*}(\KO)$ and applying the reasoning dual to the one used in the proof of Theorem~\ref{thm:rational_stable_operations_KW} we obtain that
\[
(\KW_\Q)_{*,*}(\KW_\Q)=\varinjlim (\PE_\Q)_{*+8n+4,*+4n+2} (\HGr),
\]
where 
\[
(\PE_\Q)_{*,*}(\HGr)=\bigoplus_{i\ge 1} (\KW_\Q)_{*-4i,*-2i}(\pt)\overline{s}^\vee_i
\]
is the subspace of $(\KW_\Q)_{*,*}(\HGr)$ dual to $\IQ_\Q^{*,*}(\HGr)$ (see Theorem~\ref{thm:homological_HGr}). Here $\overline{s}_i^\vee\in \PE_{4i,2i}(\HGr)$ satisfies $\langle \overline{s}_i,\overline{s}_i^\vee \rangle = 1$ and $\langle \overline{s}_l,\overline{s}_i^\vee \rangle=0$ for $l\neq i$. The limit is taken with respect to the morphisms
\[
\resizebox{\textwidth}{!}{$
S^\vee_\Q \colon \bigoplus\limits_{i\ge 1} (\KW_\Q)_{*+8n-4i+4,*+4n-2i+2}(\pt)\overline{s}^\vee_i \to \bigoplus\limits_{i\ge 1} (\KW_\Q)_{*+8n-4i+12,*+4n-2i+6}(\pt)\overline{s}^\vee_i
$}
\]
given by $S^\vee_\Q(\overline{s}^\vee_i)=\beta a_i\overline{s}^\vee_i+c_{i+2}\overline{s}^\vee_{i+2}$ for 
\[
a_{2j}=c_{2j}=0,\quad a_{2j+1}=(2j+1)^2,\quad c_{2j+1}=-8j(2j+1)
\]
just as in the proof of Theorem~\ref{thm:rational_stable_operations_KW}. The matrix of $S^\vee_\Q$ is the following one.
\[
\begin{pmatrix}
\beta a_1 & 0 & 0 & 0 & 0 & 0 & \hdots \\
0 & 0 & 0 & 0 & 0 & 0 & \hdots \\
c_3 & 0 & \beta a_3 & 0 & 0 & 0 & \hdots \\
0 & 0 & 0 & 0 & 0 & 0 & \hdots \\
0 & 0 & c_5 & 0 & \beta a_5 & 0 & \hdots \\
0 & 0 & 0 & 0 & 0 & 0 & \hdots \\
\vdots & \vdots & \vdots & \vdots & \vdots & \vdots & \ddots 
\end{pmatrix}.
\]
We can drop all the terms involving $\overline{s}^\vee_{2j}$ obtaining
\[
(\KW_\Q)_{*,*}(\KW_\Q)=\varinjlim_n \bigoplus_{j\ge 0} (\KW_\Q)_{*+8(n-j),*+4(n-j)}(\pt)\overline{s}^\vee_{2j+1}.
\]
For $4\nmid p-q$ the group $(\KW_\Q)_{p,q}(\KW_\Q)$ vanishes and in view of the periodicity realized by cap product with $\eta$ and cap product with $\beta$ (that coincides with multiplication by $\beta_l$, see Definition~\ref{def:right_cap}) from now on we deal with $(\KW_\Q)_{0,0}(\KW_\Q)$.

Denote $\tau_{1}=\overline{s}^\vee_1$, $\tau_{2j+1}=\beta^{-1} S^\vee_\Q(\tau_{2j-1})$. One can easily check that
\[
\bigoplus_{j\ge 0} (\KW_\Q)_{8(n-j),4(n-j)}(\pt)\overline{s}^\vee_{2j+1}=\bigoplus_{j\ge 0} (\KW_\Q)_{8n,4n}(\pt)\tau_{2j+1}.
\]
In this basis $S^\vee_\Q$ is a shift composed with multiplication by $\beta$, hence the limit is easily computed:
\[
\varinjlim_n \bigoplus_{j\ge 0} (\KW_\Q)_{8n,4n}(\pt)\tau_{2j+1}=\bigoplus_{m\in \Z}(\KW_\Q)_{0,0}(\pt) \tau^{st}_{m}
\]
with the structure morphisms 
\[
\bigoplus_{j\ge 0} (\KW_\Q)_{8n,4n}(\pt)\tau_{2j+1}\to \bigoplus_{m\in \Z}(\KW_\Q)_{0,0}(\pt) \tau^{st}_{m}
\]
given by $\tau_{2j+1}\mapsto \beta^{-n} \tau_{j-n}^{st}$. Lemma~\ref{lm:shift_by_beta} yields that 
\[
\tau^{st}_{m}=\beta_l^{-1}\cprod\tau^{st}_{m-1}\cprod \beta_r,
\]
whence $\tau^{st}_{m}=\beta_l^{-m}\cprod\tau_0^{st}\cprod \beta_r^{m}$ and
\[
(\KW_\Q)_{0,0}(\KW_\Q)= \bigoplus_{m\in \Z}(\KW_\Q)_{0,0}(\pt) \beta_l^{-m}\cprod \tau_{0}^{st}\cprod \beta_r^{m}.
\]

In order to check that $\tau_0^{st}=u_{\KW_\Q}\wedge u_{\KW_\Q}$ (whence $\beta_l^{-m}\cprod \tau_{0}^{st}\cprod \beta_r^{m}=\beta_l^{-m}\cprod \beta_r^{m}$) recall that $s^\vee_1=\chi_1$ and consider the following diagram.
\[
\xymatrix@C=4pc{
 & \KW_\Q\wedge (\Sigma^\infty_{\HP^1} \HP^1 \{-1\}) \ar[r]^{\id_{\KW_\Q}\wedge i}\ar[d]^\cong    &\KW_\Q\wedge (\Sigma^\infty_{\HP^1} \HGr \{-1\}) \ar[d]^{\cong} \\
\mathbb{S} \ar[r]^{u_{\KW_\Q}\wedge \id_{\mathbb{S}}} \ar[ru]^(0.4){u_{\KW_\Q}\wedge \Sigma^{-1}_{\HP^1} \chi_1}  \ar[rd]_{\tau_0^{st}} &  \KW_\Q \wedge \mathbb{S}  \ar[d]^{\id_{\KW_\Q}\wedge u_{\KO}} & \KW_\Q \wedge \Tr_1\KO \ar[dl]^{\id_{\KW_\Q}\wedge j}\\
& \KW_\Q\wedge \KO
}
\]
Here 
\begin{itemize}
\item
$i$ is induced by the canonical embedding $\HProj^1\to \HGr$,
\item
$j$ is the canonical morphism $\Tr_1\KO\to \KO$.
\end{itemize}
The right half of the diagram commutes by Lemma~\ref{lm:unit_KO}. The upper triangle commutes by Corollary~\ref{cor:normalization}, the outer contour commutes by the definition of $\tau_0^{st}$, thus the lower triangle commutes as well and the claim follows.
\end{proof}

\begin{remark}
One can restate Theorem~\ref{thm:cooperations} as follows.  Let 
\[
\mathcal{B}=\left( \Sigma^{8m,4m}\beta^m \right)_{m\in \Z}  \colon \bigoplus_{m\in \Z} \mathbb{S}\wedge \Sph^{8m,4m} \to \KW_\Q
\]
be the morphism given by $ \Sigma^{8m,4m} \beta^m\colon \mathbb{S}\wedge \Sph^{8m,4m} \to \KW_\Q$. Then the induced homomorphism in homology
\[
\mathcal{B}_{\KW_\Q}\colon (\KW_\Q)_{*,*}(\bigoplus_{m\in \Z}\mathbb{S}\wedge \Sph^{8m,4m})\to (\KW_\Q)_{*,*}(\KW_\Q)
\]
is an isomorphism.
\end{remark}

Now we turn to the description of integral cooperations.

\begin{theorem} \label{thm:cooperations_integral}
Let $\mathrm{M}$ be the abelian subgroup of $\Q[v,v^{-1}]$ generated by polynomials 
\[
f_{j,n}=\frac{v^{-n}\prod_{i=0}^{j-1}(v-(2i+1)^2)}{4^j(2j)!},
\]
$j\ge 0, n\in \Z$. Then there are canonical isomorphisms of left $\KW_{0,0}(\pt)\cong \W(k)$-modules
\[
\KW_{p,q}(\KW) \cong \left[ \begin{array}{ll} \W(k)\otimes_\Z \mathrm{M}, & 4 \mid p-q, \\ 0, & \text{otherwise}. \end{array} \right.
\]
Rationally $\W_\Q(k)\otimes_\Z \mathrm{M}\cong (\KW_\Q)_{r,r-4t}(\KW)$ is given by
\[
v^m\mapsto \eta^{r-8t}\beta_l^{t-m}\cprod \beta_r^{m}
\]
in the notation of Theorem~\ref{thm:cooperations}.
\end{theorem}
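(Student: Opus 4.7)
The strategy parallels the proof of Theorem~\ref{thm:cooperations} but is carried out integrally rather than rationally. Via $\KW_{*,*}(\KW)=\KW_{*,*}(\KO)$ and Lemma~\ref{lm:truncated_limit},
\[
\KW_{*,*}(\KW)=\varinjlim_n \KW_{*+8n+4,\,*+4n+2}(\HGr),
\]
and Theorem~\ref{thm:homological_HGr} identifies each term with $\W(k)[\eta^{\pm 1},\beta^{\pm 1}][\chi_1,\chi_2,\ldots]$, the stage-to-stage map being induced by the double spectrum structure map $B=\sigma^o_{\KO}\circ(\id\wedge\sigma^s_{\KO})$.

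As in the rational argument, $B^{\KW}$ in cohomology vanishes on decomposables because $\KW^{*,*}(\HP^1\wedge\HP^1\wedge\HGr)$ carries trivial multiplication, so dually $B_*$ in homology lands in the primitives $\bigoplus_{i\ge 1}\W(k)[\eta^{\pm 1},\beta^{\pm 1}]\chi_i$; a direct Hopf-algebraic check $\langle s_i,\chi_a\chi_b\rangle=0$ for $a,b\ge 1$ shows that all monomials of degree $\ge 2$ in the $\chi_i$ are killed, integrally. Lemma~\ref{lm:main_computation} then yields $S^\vee(\chi_i)=\beta a_i\chi_i+c_{i+2}\chi_{i+2}$ with $a_{2j}=c_{2j}=0$, $a_{2j+1}=(2j+1)^2$, $c_{2j+1}=-8j(2j+1)$. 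The even-indexed $\chi_{2j}$ are killed, and for the $(0,0)$-component of the colimit one takes the integer generators $y_j^{(n)}=\beta^{n-j}\chi_{2j+1}$ at stage $n$, whose transition reads
\[
y_j^{(n)}\longmapsto (2j+1)^2\,y_j^{(n+1)}-8(j+1)(2j+3)\,y_{j+1}^{(n+1)}.
\]

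To match the colimit with $\W(k)\otimes_\Z\mathrm{M}$, I would use the rational isomorphism of Theorem~\ref{thm:cooperations} to embed the integer colimit into $\Q[v,v^{-1}]$ via $v^m\mapsto\beta_l^{-m}\cprod\beta_r^m$. Starting from $y_0^{(n)}\mapsto v^{-n}$ and inducting on $j$ with the recursion above, one obtains an explicit rational-polynomial image for each $y_j^{(n)}$, and a calculation matching it against the two-term recursion
\[
f_{j+1,n+1}=\frac{(v-(2j+1)^2)\,f_{j,n+1}}{8(j+1)(2j+1)},\qquad f_{j,n}=v\,f_{j,n+1}
\]
satisfied by the polynomials in the theorem statement identifies this image (up to an explicit rational scalar) with $f_{j,n}$. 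A divisibility bookkeeping then shows that the integer $\W(k)$-span coincides with $\W(k)\otimes_\Z\mathrm{M}$. The vanishing $\KW_{p,q}(\KW)=0$ for $4\nmid p-q$ is immediate from the grading, as only odd-indexed $\chi_{2j+1}$ contribute, and the extension to bidegrees $(r,r-4t)$ uses the invertible $\eta^{r-8t}$ together with the $\beta_l,\beta_r$-shifts of Lemma~\ref{lm:shift_by_beta} to produce the formula $v^m\mapsto\eta^{r-8t}\beta_l^{t-m}\cprod\beta_r^m$.

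The main obstacle is this explicit integer match: confirming that the $\W(k)$-span of the $y_j^{(n)}$-images in $\Q[v,v^{-1}]$ equals exactly $\W(k)\otimes\mathrm{M}$, neither larger nor smaller. This is a delicate combinatorial bookkeeping task aligning the numerical coefficients $(2j+1)^2$ and $8(j+1)(2j+3)$ governing the colimit recursion with the $4^j(2j)!$ denominators defining the polynomials $f_{j,n}$, and it must be performed carefully at every prime dividing the $(2j+1)$'s and $(j+1)$'s.
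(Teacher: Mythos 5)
Your overall architecture (reduce to the colimit over $\HGr$, pass to the primitive part dual to the indecomposable quotient, compute the transition matrix, embed into the rational answer, identify the integral lattice) is the same as the paper's, but there is a genuine error at the one step where the integral statement differs from the rational one. The classes $\chi_i$ are not primitive for $i\ge 2$ --- for instance $\langle s_1^2,\chi_2\rangle=\langle \xi(2)+2\xi(1,1),\chi_2\rangle=1$ by Theorem~\ref{thm:homological_HGr} --- so the image of the transition map does not lie in $\bigoplus_i \W(k)[\eta^{\pm1},\beta^{\pm1}]\chi_i$, and the formula $S^\vee(\chi_i)=\beta a_i\chi_i+c_{i+2}\chi_{i+2}$ you import from the rational proof is the formula for the power-sum duals $\overline{s}^\vee_i$, which are primitive but \emph{not} integral: $\overline{s}^\vee_i=\tfrac{(-1)^{i+1}}{i}\,\overline{b}^\vee_i$. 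The correct integral basis of the primitives is given by the duals $\overline{b}^\vee_i$ of the Borel classes, and transporting $S^\vee$ to that basis through the pairing $\langle\overline{s}_i,\overline{b}^\vee_i\rangle=(-1)^{i+1}i$ changes the off-diagonal coefficient from $-8(j+1)(2j+3)$ to $-8(j+1)(2j+1)$.

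This is not cosmetic: with your recursion the denominators accumulating along the colimit are $\prod_{i=1}^{j}8i(2i+1)=4^j(2j+1)!$, so the lattice you would obtain is strictly larger than $\mathrm{M}$, whose generators carry $4^j(2j)!=\prod_{i=1}^{j}8i(2i-1)$. Your own two displayed recursions already betray the mismatch: the colimit transition you derive has $8(j+1)(2j+3)$, while the identity $f_{j+1,n+1}=(v-(2j+1)^2)f_{j,n+1}/\bigl(8(j+1)(2j+1)\bigr)$ satisfied by the $f_{j,n}$ has $8(j+1)(2j+1)$, so the ``divisibility bookkeeping'' you defer cannot close. The repair is exactly the paper's argument: work with $\overline{b}^\vee_{2j+1}$ throughout, obtain $S^\vee_\Z(\overline{b}^\vee_{2j+1})=(2j+1)^2\beta\,\overline{b}^\vee_{2j+1}-8(j+1)(2j+1)\,\overline{b}^\vee_{2j+3}$, and then the image of $\beta^{n-j}\overline{b}^\vee_{2j+1}$ in $\Q[v,v^{-1}]$ is $\pm f_{j,n}$ on the nose, with no further lattice comparison needed.
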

\begin{proof}
Applying the reasoning dual to the one used in the beginning of the proof of Theorem~\ref{thm:rational_stable_operations_KW} we obtain that
\[
\KW_{*,*}(\KW)=\varinjlim \bigoplus_{i\ge 1} \KW_{*+8n-4i+4,*+4n-2i+2}(\pt)\overline{b}^\vee_{i}.
\]
Here $\overline{b}^\vee_{i}$ belongs to the submodule of $\KW_{*,*}(\HGr)$ dual to the indecomposable quotient $\IQ^{*,*}(\HGr)$ and satisfies $\langle b_i, \overline{b}^\vee_{i}\rangle=1$, $\langle b_l, \overline{b}^\vee_{i}\rangle=0$ for $l\neq i$. The limit is computed along the morphisms $S^\vee$ dual to the corresponding morphisms $S$ between indecomposable quotients. Recall that $S$ is induced by a desuspension of an appropriate morphism $\HP^1\wedge \HP^1\wedge \HGr\to \HGr$.

It follows from Lemma~\ref{lm:formal_group_law} that $S(b_i)$ is an $\Z[\beta,\beta^{-1}]$-linear combination of products of Borel classes $b_j$-s (cf. Lemma~\ref{lm:main_computation}), thus there exists a linear map
\[
S_\Z \colon \prod_{i\ge 1} \Z[\beta,\beta^{-1}] \overline{b}_{i} \to \prod_{i\ge 1} \Z[\beta,\beta^{-1}] \overline{b}_{i}.
\]
inducing 
\[
S \colon \prod_{i\ge 1}\KW^{*+8n-4i+12,*+4n-2i+6}(\pt)\overline{b}_i\to \prod_{i\ge 1}\KW^{*+8n-4i+4,*+4n-2i+2}(\pt)\overline{b}_i.
\]
Moreover, $S_\Z$ gives rise to the dual map
\[
S^\vee_\Z\colon \bigoplus_{i\ge 1} \Z[\beta,\beta^{-1}] \overline{b}^\vee_{i} \to \bigoplus_{i\ge 1} \Z[\beta,\beta^{-1}] \overline{b}^\vee_{i}.
\]
and 
\[
S^\vee\colon \bigoplus_{i\ge 1}\KW_{*+8n-4i+4,*+4n-2i+2}(\pt)\overline{b}^\vee_i\to \bigoplus_{i\ge 1}\KW_{*+8n-4i+12,*+4n-2i+6}(\pt)\overline{b}^\vee_i
\]
is given by $S^\vee=\id_{\KW_{*,*}(\pt)} \otimes_{\Z[\beta,\beta^{-1}]} S^\vee_\Z$. 

The proof of Lemma~\ref{lm:main_computation} yields
\[
S_\Z(\overline{s}_{2j})=0,\quad S_\Z(\overline{s}_{2j+1})=\beta(2j+1)^2\overline{s}_{2j+1} - 8 j(2j+1) \overline{s}_{2j-1}.
\]
From the Newton identities we have $\langle \overline{s}_i, \overline{b}^\vee_i\rangle=(-1)^{i+1}i$ and $\langle \overline{s}_l, \overline{b}^\vee_i\rangle=0$ for $l\neq i$. Combining this with the above we obtain
\begin{gather*}
\langle \overline{s}_{2j}, S^\vee_\Z(\overline{b}^\vee_i) \rangle= \langle S_\Z(\overline{s}_{2j}), \overline{b}^\vee_i \rangle=0,\\
\langle \overline{s}_{2j+1}, S^\vee_\Z(\overline{b}^\vee_i) \rangle= \langle S_\Z(\overline{s}_{2j+1}), \overline{b}^\vee_i \rangle=
\left[\begin{array}{ll} \beta^{-1}(2j+1)^3, & i=2j+1,\\ -8j(2j-1)(2j+1), & i=2j-1,\\ 0, & \text{otherwise}. \end{array}\right.
\end{gather*}
Hence $S^\vee_\Z(\overline{b}^\vee_{2j})=0$ and $S^\vee_\Z(\overline{b}^\vee_{2j+1})=(2j+1)^2\beta\overline{b}^\vee_{2j+1}-8(j+1)(2j+1) \overline{b}^\vee_{2j+3}$. Thus we may drop all the $\overline{b}_{2j}^\vee$ obtaining
\[
\KW_{*,*}(\KW)=\varinjlim \bigoplus_{j\ge 0} \KW_{*+8(n-j),*+4(n-j)}(\pt)\overline{b}^\vee_{2j+1}.
\]
Specifying to the degree $(p,q)$, $4\nmid p-q$, we obtain $\KW_{p,q}(\KW)=0$ since 
\[
\KW_{p+8(n-j),q+4(n-j)}(\pt)\cong \W^{[q-p-4(n-j)]}(k)=0.
\]
In view of the periodicities given by cap-product with $\eta$ and $\beta$ from now on we deal with $\KW_{0,0}(\KW)$. 

We have
\[
\KW_{0,0}(\KW)=\varinjlim_n \bigoplus_{j\ge 0} \KW_{8(n-j),4(n-j)}(\pt)\overline{b}^\vee_{2j+1}=\varinjlim_n \bigoplus_{j\ge 0} \W(k)\beta^{n-j}\overline{b}^\vee_{2j+1}
\]
where the colimit is computed with respect to the morphism 
\[
S^\vee\colon \bigoplus_{j\ge 0} \W(k)\beta^{n-j}\overline{b}^\vee_{2j+1} \to \bigoplus_{j\ge 0} \W(k)\beta^{n+1-j}\overline{b}^\vee_{2j+1}
\]
given by 
\[
S^\vee(\beta^{n-j}\overline{b}^\vee_{2j+1})=(2j+1)^2\beta^{n+1-j}\overline{b}^\vee_{2j+1}-8(j+1)(2j+1)\beta^{n-j}\overline{b}^\vee_{2j+3}.
\]
Colimit commutes with tensor product, thus
\[
\KW_{0,0}(\KW)= \W(k) \otimes_\Z \left( \varinjlim_n \bigoplus_{j\ge 0} \Z\beta^{n-j} \overline{b}^\vee_{2j+1} \right)
\]
with the morphisms 
\[
S^\vee_\Z\colon \bigoplus_{j\ge 0} \Z\beta^{n-j} \overline{b}^\vee_{2j+1} \to \bigoplus_{j\ge 0} \Z\beta^{n+1-j} \overline{b}^\vee_{2j+1}
\]
in the bases $\{\beta^{n-j} \overline{b}^\vee_{2j+1}\}_{j\ge 0}$ and $\{\beta^{n+1-j}\overline{b}^\vee_{2j+1}\}_{j\ge 0}$ given by 
\[
\begin{pmatrix}
a_1 & 0 & 0 & 0 & \hdots \\
c'_3 & a_3 & 0 & 0 & \hdots \\
0 & c'_5 & a_5 & 0 & \hdots \\
0 & 0 & c'_7 & a_7 & \hdots \\
\vdots & \vdots & \vdots & \vdots & \ddots 
\end{pmatrix},
\]
where $a_{2j+1}=(2j+1)^2$ and $c'_{2j+1}=-8j(2j-1)$. 

The terms in the last colimit are torsion-free, so the canonical morphism
\[
\varinjlim_n \bigoplus_{j\ge 0} \Z\beta^{n-j}\overline{b}^\vee_{2j+1} \to \varinjlim_n \bigoplus_{j\ge 0} \Q\beta^{n-j}\overline{b}^\vee_{2j+1}
\]
is injective. One computes the right-hand side colimit as in the proof Theorem~\ref{thm:cooperations}. Denote 
\[
T^\vee_\Z=(\beta^{-1}\cap -)\circ S^\vee_\Z\colon \bigoplus_{j\ge 0} \Z\beta^{n-j}\overline{b}^\vee_{2j+1} \to \bigoplus_{j\ge 0} \Z\beta^{n-j} \overline{b}^\vee_{2j+1}
\]
and choose a basis of $\bigoplus_{j\ge 0} \Q\beta^{n-j}\overline{b}^\vee_{2j+1}$ to be
\[
\{\beta^n\overline{b}^\vee_1, T_\Q^\vee(\beta^n\overline{b}^\vee_1), (T_\Q^\vee)^2(\beta^n\overline{b}^\vee_1),\hdots \}.
\]
In these bases $S^\vee_\Q$ is a shift, thus
\[
\varinjlim_n \bigoplus_{j\ge 0} \Q\beta^{n-j}\overline{b}^\vee_{2j+1}= \bigoplus_{m\in \Z} \Q \cdot[\beta_l^{-m}\cprod\beta_r^m]
\]
with the canonical morphisms  
\[
\bigoplus_{j\ge 0} \Q\beta^{n-j} \overline{b}^\vee_{2j+1}\to \bigoplus_{m\in \Z} \Q\beta_l^{-m}\cprod\beta_r^m
\]
given by $(T_\Q^\vee)^m(\beta^{n}\overline{b}^\vee_{1})\mapsto \beta_l^{-m}\cprod\beta_r^m$ (the notation is consistent with the one used in the proof of Theorem~\ref{thm:cooperations}). The limit $\varinjlim\limits_n \bigoplus_{j\ge 0} \Z\beta^{n-j}\overline{b}^\vee_{2j+1}$ is the union of the images for the canonical morphisms
\[
\phi_n\colon \bigoplus_{j\ge 0} \Z\beta^{n-j}\overline{b}^\vee_{2j+1} \to \bigoplus_{m\in \Z} \Q \beta_l^{-m}\cprod\beta_r^m.
\]
We claim that these morphisms are given by 
\[
\phi_n(\beta^{n-j} \overline{b}^\vee_{2j+1})
=\frac{(\beta_l^{-m}\cprod\beta_r^m)\prod_{i=0}^{j-1}(\beta_l^{-1}\cprod\beta_r - a_{2i+1})}{\prod_{i=1}^{j}{c'_{2i+1}}},
\]
where the multiplication on the right-hand side is componentwise, i.e. 
\[
(\beta_l^{-n}\cprod\beta_r^n)(\beta_l^{-m}\cprod\beta_r^m)=\beta_l^{-n-m}\cprod\beta_r^{n+m}.
\]
Indeed, for $j=0$ we have $\phi_n(\beta^{n} \overline{b}^\vee_{1})=\beta_l^{n}\cprod\beta_r^{-n}$. The general case follows from the equalities
\begin{multline*}
\phi_{n+1}(a_{2j-1}\beta^{n+1-j} \overline{b}^\vee_{2j-1}+c'_{2j+1}\beta^{n-j} \overline{b}^\vee_{2j+1})=\\
=\phi_{n+1}(S^\vee_\Z(\beta^{n-j+1} \overline{b}^\vee_{2j-1}))=\phi_n(\beta^{n-j+1} \overline{b}^\vee_{2j-1}).
\end{multline*}
The claim of the theorem follows.
\end{proof}

\begin{remark}
It follows from the above theorem applied to $k=\mathbb{R}$ (or any other field satisfying $W(k)=\Z$) that $M$ is an algebra for the usual multiplication of polynomials, i.e. that products $f_{j_1,n_1}f_{j_2,n_2}$ can be expressed as linear combinations of $f_{j,n}$'s. For example we have
\[
f_{1,0}^2=9f_{1,-1}+198f_{2,-1}+720f_{3,-1}.
\]
\end{remark}

\end{document}